\definecolor{darkgreen}{rgb}{0,0.65,0}
\definecolor{darkred}{rgb}{0.65,0,0}
\newcommand{\norm}[1]{\left\lVert {#1} \right\rVert}
\newcommand{\RNum}[1]{\uppercase\expandafter{\romannumeral #1\relax}}
\newcommand{\rnum}[1]{\expandafter{\romannumeral #1\relax}}
\newcounter{hours}
\newcounter{minutes}
\newcommand{\printtime}{\setcounter{hours}{\time/60}%
                        \setcounter{minutes}{\time-\value{hours}*60}%
\ifthenelse{\value{hours}<10}{0}{}\thehours:%
\ifthenelse{\value{minutes}<10}{0}{}\theminutes}
\def\lsp{\def\baselinestretch{0.75}\large\normalsize}
\def\ssp{\def\baselinestretch{1.0}\large\normalsize}
\def\dsp{\def\baselinestretch{1.37}\large\normalsize}
\newcommand{\Rbb}{\ensuremath{\mathbb{R} }}
\newtheorem{theorem}{Theorem}[section]
\newtheorem{lemma}[theorem]{Lemma}
\newtheorem{proposition}[theorem]{Proposition}
\newtheorem{remark}[theorem]{Remark}
\newtheorem{assumption}[theorem]{Assumption}
\theoremstyle{definition}
\newtheorem{example}{Example}
\begin{document}

\flushbottom
\pagestyle{empty}
\pagenumbering{arabic}
%
\ssp 

\title{\vspace{-0.75in}\bf 
A simplified nonsmooth nonconvex bundle method with applications to security-constrained ACOPF problems} 
\vspace{0.1in}
\author{\hspace{-0.25in} Jingyi WANG\footnote{wang125@llnl.gov} and Cosmin G. PETRA 
\\[0.05in]
\sl\small \textit{Center for Applied Scientific Computing} \\
\textit{Lawrence Livermore National Laboratory}\\
Livermore, California, USA 
\rm \normalsize}
\date{\today}
\maketitle
\small
\lsp
\par\noindent
\protect\vspace{-0.3in}
\renewcommand{\contentsname}{\normalsize\centerline{Table of contents}}
\tableofcontents
\ssp
\protect\vspace{0.1in}
\normalsize
\begin{abstract}
An optimization algorithm for a group of nonsmooth nonconvex problems inspired by two-stage 
stochastic programming problems is proposed. The main challenges for these problems include 
(1) the problems lack the popular lower-type properties such as prox-regularity assumed in many nonsmooth nonconvex optimization algorithms,
(2) the objective can not be analytically expressed and (3) the evaluation of function 
values and subgradients are computationally expensive. To address these challenges, this report  
first examines the properties that exist in many two-stage problems, specifically upper-$C^2$ objectives. 
Then, we show that quadratic penalty method for security-constrained alternating current optimal power flow (SCACOPF) contingency problems 
can make the contingency solution functions upper-$C^2$. 
Based on these observations, a simplified bundle algorithm that bears similarity to sequential quadratic programming (SQP) method 
is proposed.  
It is more efficient in implementation and computation compared to conventional bundle methods.
Global convergence analysis of the algorithm is presented under novel and reasonable assumptions. 
The proposed algorithm therefore fills the gap of theoretical convergence for smoothed SCACOPF problems.
The inconsistency that might arise in our treatment of the constraints are addressed through a penalty 
algorithm whose convergence analysis is also provided. Finally, theoretical capabilities and numerical performance of the algorithm 
are demonstrated through numerical examples.
\end{abstract}
\par\noindent
\protect\vspace{-0.1in}
\nopagebreak[5]
\begin{description}
\item[Keywords:] Optimization; Nonsmooth; Nonconvex; Upper regularity; 
\end{description}
\dsp
\newpage
\pagestyle{fancyplain}
\large
%
%
\normalsize

\normalsize
\section{Introduction}\label{se:intro}

In this report, we consider the class of nonsmooth nonconvex constrained optimization problems in the form of 

\begin{equation} \label{eqn:opt0}
 \centering
  \begin{aligned}
   &\underset{\substack{x}}{\text{minimize}} 
	  & & f(x)+ \mathcal{R}(x)\\
   &\text{subject to}
	  & & c(x) =c_E\\
	  &&& d^l \leq d(x) \leq d^u\\
	  &&& x^l \leq x \leq x^u,\\
  \end{aligned}
\end{equation}
where the functions $f(\cdot):\Rbb^n\to\Rbb$, $c(\cdot):\Rbb^n\to\Rbb^{m_c}$, $d(\cdot):\Rbb^n\to\Rbb^{m_d}$ are continuously differentiable. 
The entries of the bound vectors $d^l$ and $d^u$ are in 
$\Rbb$. The bounds
on the optimization variables $x$ are such that $x^l,x^u\in \Rbb^n$, 
$x_j^l<x_j^u$, for all $j\in\{1,...,n\}$. The function $\mathcal{R}(\cdot)$ is nonsmooth and nonconvex, as 
in a large number of important applications.
In addition, the analytical form of $\mathcal{R}(\cdot)$ might not be available, 
 forcing a potential algorithm to rely on known points in the optimization space. 
Prominent problems in the form of~\eqref{eqn:opt0} include two-stage stochastic programming problems with recourse~\cite{Shapiro_book,Birge97Book,KallWallace}. 
While general to apply to various paradigms of two-stage optimization under uncertainty (or other nonsmooth problems), the  methodology presented in this report is driven by the problem of optimal operation of large-scale electrical transmission power grids. 

\subsection{Power grid optimization}\label{se:intro-scacopf}
Electricity generation and distribution in nationwide power grid systems rely upon optimization models and tools to find the  power generation injection levels and transmission power flows at each of the grid nodes so that the demand at given substations is met at the lowest generation cost and minimum transmission losses~\cite{irc2017}. 
Among them, alternating current optimal power flow (ACOPF) models have been proposed, researched, and adopted in some cases in operations because they model the power grid more accurately (\textit{e.g.}, capture reactive power and include transmission losses) than the economic dispatch models. 
ACOPF models are becoming increasingly challenged with the penetration of (highly intermittent) renewable sources of energy (\textit{e.g.}, wind and solar) and ongoing shifts in demand, which are caused by the emergence of commodity solar systems, battery storage, and electric vehicles~\cite{Capitanescu2016,Molzahn2019}. To better accommodate these emerging technologies, power grid operators need to operate increasingly complex power grid systems under highly stochastic demand and generation profiles and frequent equipment failures.

SCACOPF is one of the salient emerging optimization paradigms for increasing the reliability of the power grid and ensuring its operation~\cite{NERCStandards} under various types of failures. SCACOPF  extends the capabilities of ACOPF by requiring that the state of the grid is secure with respect to a comprehensive list of equipment \textit{contingencies} (\textit{e.g.}, failures of generators, transmission lines, and transformers) and sometimes under stochastic demand and/or generation~\cite{NERCStandards, Ekisheva2015}. As a result, the SCACOPF mathematical optimization problem reaches extreme scale as it needs to simulate multiple ACOPF models (routinely $O(10^5)$) in order to find a secure state of the grid. An equally important challenge is given by the highly nonlinear and nonconvex nature of SCACOPF (as well as of ACOPF) problems, which makes it difficult to find global (or at least good quality) optima of the problem.  On the other hand, SCACOPF models  need to be solved under strict time limitations, \textit{i.e.}, in real-time, to allow ample adjustment time for the equipment (generation ramp up or down, load shedding, transmission switching, etc.). These challenges have sparked research over the last decades to study new scalable optimization algorithms and develop parallel computer implementations for SCACOPF problems. 

Parallel computing has recently shown promising results for reaching real-time solutions for SCACOPF. In~\cite{ChiangPetraZavala_14_PIPSNLP,Qiu2005,petra_14_augIncomplete,petra_14_realtime}, the SCACOPF problem is solved in parallel by decomposing the
linear algebra of interior-point methods~\cite{Nocedal_book} using a Schur complement technique. Alternative parallel computing
approaches, such as the optimization-based decompositions from~\cite{Liu2013} and~\cite{petra_21_gollnlp}, break down  the SCACOPF problem at the level of the formulation into base case ACOPF and contingency response ACOPF subproblems and enforce the reconciliation between subproblems' coupling variables using first-order gradient-based methods~\cite{Liu2013} or carefully chosen approximations for the coupling terms~\cite{petra_21_gollnlp}. 
Decomposed problem formulation however often generates a nonsmooth nonconvex $\mathcal{R}(\cdot)$, posing challenge to 
the design of an algorithm that converges theoretically and is computationally efficient. 

While effective in practice, decomposed SCACOPF algorithms such as the smoothed two-stage solver in~\cite{petra_21_gollnlp} has not been fully analyzed in theory. The existing nonsmooth nonconvex optimization literature does not apply directly to the problem. In this report, we aim to contribute to the theoretical analysis of the decomposition algorithms for SCACOPF problems, 
and more broadly two-stage optimization problems. In addition, we provide algorithm design and implementation details that could be valuable for large-scale nonsmooth nonconvex optimization problems.
\subsection{Nonsmooth optimization}\label{se:intro-nonsmooth}
Nonsmooth optimization has been researched extensively for decades. 
Most prominent methods include subgradient methods and bundle methods. 
Subgradient method takes steps in the direction of a subgradient at a given point, 
relying heavily on a robust step size control algorithm to achieve good rates of convergence~\cite{shor1985}.
Bundle methods are widely regarded as
one of the most efficient optimization methods to address discontinuous first-order derivatives~\cite{makela1992,Kiwiel1996}. 
The bundle method develops an approximation model for the objective with the information from previous iterations, 
referred to as a bundle, and solves optimization subproblems with the model~\cite{mifflin1982,kiwiel1985}. 
The solution to the subproblem is regarded as a trial step, which through a rejection criterion is either taken as 
a serious step or rejected but included in the bundle to improve the trial step for the next iteration.
In the case with convex objectives, 
the linearization error between the objective function and the tangent planes that comprise the approximation model is positive, a property that is not 
valid for nonconvex functions.
Therefore, adjustment to the approximation model is needed.
A commonly used one, called the down-shift mechanism, is introduced in~\cite{mifflin1982} and used in~\cite{lemarechal1996,lemarechal1978,schramm1992,zowe1989}. Convergence analysis using this mechanism can be found in~\cite{apkarian2008,noll2009}. 
Given additional local convexity properties, \textit{e.g.}, lower-$C^2$, the slope of the tangent planes can be titled as well to generate positive linearization errors~\cite{hare2010}. These redistributed bundle methods are shown to work in practice under less ideal conditions~\cite{hare2015}.
Constrained nonsmooth nonconvex optimization adds another layer of complexity on top for bundle methods. Convex constraints can typically be maintained as they are in the subproblems and convergence analysis would stand valid~\cite{hare2015}. In particular, affine constraints, commonly appearing in applications do not pose extra challenge~\cite{dao2016} in convergence analysis.

In dealing with nonsmooth nonconvex objectives with general constraints, ideas from penalty and filter methods are often applied to incorporate the constraints into the objective in bundle methods. The global convergence studies in this case typically shows the algorithm can either converge to a KKT point of the original problem or to a stationary/critical point of the constraint violation. The latter case could lead to an infeasible solution.
An exact penalty merit function that measures the progress of both the objective and inequality constraint violation is used in the redistributed bundle method in~\cite{yang2014}.
Lower-$C^2$ and a special strong Slater condition are assumed to ensure global convergence.
In~\cite{dao2015}, a progress function that is the maximum of a penalized objective reduction and constraint violation is chosen. The bundle method is applied to the subproblem whose objective is replaced by the progress function, eliminating the general constraint. Given lower-$C^1$ or upper-$C^1$ property, convergence is proved. Similar algorithm with direct assumptions on the penalty and quadratic parameters are presented in~\cite{monjezi2019}. Others have chosen a different set of penalized objective functions~\cite{lv2018}.

Outside bundle methods,~\cite{curtis2012} proposed a sequential quadratic programming (SQP) method using gradient sampling for nonconvex nonsmooth inequality constrained optimization. As conventional in SQP, the constraint is relaxed through linearization while the iterations are taken at differentiable points of the Lipschitz objectives and constraints. The global convergence result of the algorithm shows that accumulation points are the stationary points of the exact penalty function, which can be equal to the constraint violation function depending on the penalty parameter. 
A more efficient BFGS-SQP is proposed in~\cite{curtis2017} which shows promising numerical behaviors without theoretical guarantee of convergence.
In \cite{xu2015}, a smoothing function of the objective that satisfies gradient consistency property is used together with augmented Lagrangian constraint relaxation. Extensive discussion of constraint qualifications are presented in order to achieve convergence. Line search of the Lagrangian function is possible due to the smoothing function which converges in the limit to the nonsmooth objective.  
Alternating direction method of
multipliers (ADMM) has also been applied to nonsmooth nonconvex problems and in particular interest to us, distributed and asynchronous parallel algorithm~\cite{hong2018}. In~\cite{wang2019} the convergence analysis requires the objective to be locally prox-regular with affine constraints, similar to those in bundle method literature. One of the difficulties in applying ADMM to our application is the non-existence of the analytical form of part of the objective.
Recently, difference-of-convex functions have been systematically studied in~\cite{cui2021}. In particular interest to this paper, the recourse function of linearly bi-parameterized two-stage problems with quadratic recourse is shown to have convexity-concavity property~\cite{liu2020}. The authors then proposed an iterative algorithm with a quadratic convex subproblem that converges subsequently to generalized critical points. The presence of a nonsmooth concave function from recourse function in the objective is novel and closely related to upper-$C^2$ property.

The report is organized as follows. In Section~\ref{sec:prob}, we describe general two-stage stochastic programming problems
with an emphasis on the SCACOPF problem. In particular, we discuss the upper-type 
properties of $\mathcal{R}(\cdot)$ that arise from such problems, which serve as the guild lines in designing the algorithm.
We also propose quadratic penalty smoothing 
to enable a large group of problems to possess some upper-type property that they do not have otherwise. 
In Section~\ref{sec:alg}, our simplified bundle algorithm is proposed and its global convergence analysis  
is provided given novel assumptions drawn from Section~\ref{sec:prob}.
The algorithm can be seen as an extension of SQP to nonsmooth nonconvex problems.
We also discuss the adjustable update rules
for the approximations of second-stage optimal value functions and its application to two-stage stochastic 
programming problems.
An algorithm to address possible inconsistency due to our treatment of the constraints in the subproblems is presented in Section~\ref{sec:lincons}, whose global convergence analysis is provided as well.
Numerical experiments are shown in Section~\ref{sec:exp} that illustrate both the 
theoretical and numerical capabilities of the proposed algorithm.
We note that the proposed algorithms can be parallelized efficiently for two-stage stochastic programming problems 
as shown in~\cite{wang2021}, which greatly improves computational  
efficiency since evaluation of $\mathcal{R}(\cdot)$ and its general subgradients can be the 
computationally expensive. This prepares the algorithm well for large-scale SCACOPF applications.

\section{Problem description, preliminaries and regularization}\label{sec:prob}
Two-stage stochastic optimization problems with recourse fits within the formulation of~\eqref{eqn:opt0}. 
Using expectation as an example, the nonsmooth nonconvex function $\mathcal{R}(\cdot)$, also referred to as the expected \textit{recourse} function~\cite{Shapiro_book}, can be expressed as 
 $\mathcal{R}(x)=\mathbb{E}_\Omega[r(x, \omega)]$, where $\mathbb{E}$ is the expectation operator.
The function $r(x,\omega)$ is the optimal value function of the second-stage problem parameterized by $x$ and the random variable $\omega$ over a probability space $\Omega$. More specifically, $r(x,\omega)$ has the following mathematical form:
 \begin{equation} \label{eqn:2ndstage}
 \centering
  \begin{aligned}
      r(x,\omega) = &\underset{y}{\text{minimize} }
	  & & p(y,x, \omega)\\
   &\text{subject to}
	  & & c(y,x, \omega) =c_E(\omega)\\
	  &&& d^l(\omega) \leq d(y,x, \omega) \leq d^u(\omega)\\
   &&& y^l(\omega) \leq y \leq y^u(\omega).\\
 \end{aligned}
\end{equation}
In~\eqref{eqn:2ndstage}, the functions $p(\cdot,\cdot, \omega):\Rbb^p\times\Rbb^n\times\Omega\to\Rbb$, $c(\cdot,\cdot,\omega):\Rbb^p\times\Rbb^n\times\Omega\to\Rbb^{m_c}$, $d(\cdot,\cdot,\omega):\Rbb^p\times\Rbb^n\times\Omega\to\Rbb^{m_d}$ are 
assumed to be smooth. 
The entries of the bound vector $d^l(\omega)$ and $d^u(\omega)$ are in 
$\Rbb$ and the bounds
on the optimization variables $y$ is such that 
$y^l(\omega)\in\Rbb^p$, $y^u(\omega)\in\Rbb^p$ and
$y^l_{j}(\omega)<y_{j}^u(\omega)$, for all $j\in\{1,\dots,p\}$ and $\omega\in\Omega$.

SCACOPF models can be established in the form of~\eqref{eqn:opt0}--\eqref{eqn:2ndstage}, 
where a secure state of the power grid is found that minimizes operation cost $f(\cdot)$ plus the average monetary penalties $p(\cdot,\cdot, \cdot)$ associated with not satisfying power demand and violating grid's power flows over all contingencies.
Assuming uniform distribution, 
the sample space of $\omega$ consists of the set of all possible $K$ contingencies, each taken with equal probability $\frac{1}{K}$. The first-stage optimization variables $x$ in ~\eqref{eqn:opt0} correspond to power generation and power flow levels that are to be implemented instantly in practice; while the second stage variables $y$ are recourse actions to be implemented should a contingency $\omega$ occur.  
Thus, problem~\eqref{eqn:opt0} simplified with discrete uniform probability distribution can be written as   
\begin{equation} \label{eqn:opt-ms}
 \centering
  \begin{aligned}
   &\underset{\substack{x}}{\text{minimize}} 
	  & & f(x)+ \frac{1}{K}\sum_{i=1}^K r_i(x)\\
   &\text{subject to}
	  & & c(x) =c_E\\
	  &&& d^l \leq d(x) \leq d^u\\
	  &&& x^l \leq x \leq x^u,\\
  \end{aligned}
\end{equation}
where recourse functions $r_i(\cdot):\Rbb^n\to \Rbb$, for all $i \in {1,\dots,K}$, are
the optimal solution functions to the deterministic second-stage optimization subproblems, namely, 
\begin{equation} \label{eqn:opt-rc}
 \centering
  \begin{aligned}
   r_i(x) =& \underset{\substack{y_i }}{\text{minimize}} 
	  & & p_i(x,y_i)\\
   &\text{subject to}
	  & & c(x,y_i) =c_E\\
	  &&& d^l_i\leq d(x,y_i) \leq d^u_i \\
   &&& y^l_i \leq y_i \leq y^u_i.\\
  \end{aligned}
\end{equation}
When $K$ is relatively small, the problem can be solved as a single-stage problem through off-the-shelf optimization packages. However, it is usually difficult to satisfy the requirement of real-time solution time.  If the number of contingencies $K$ is exceedingly large, which is common in real-world power grid operations, then solution through serial optimization solvers is intractable. 
One approach to tackle such large-scale problems is to use memory-distributed algorithms  
with the help of parallel computing~\cite{capitanescu2011}.
The evaluation of  $r_i(\cdot)$ at a given $x$ is of considerable computational cost 
and can reach $O(10^2)$ seconds for real-world power grids. 
This characteristic requires the design of the optimization algorithm to avoid evaluating $r_i(\cdot)$ as much as possible.
\subsection{Preliminaries and notations}\label{sec:pri}
As mentioned earlier, throughout this work, we assume functions $f(\cdot),c(\cdot),d(\cdot)$ in~\eqref{eqn:opt0} are smooth, while 
functions $r_i(\cdot)$ are proper (A1,~\cite{rockafellar1998})
and locally Lipschitz. To simplify notation, we use $r(\cdot)$ to replace $r_{i}(\cdot)$ and let $K=1$.
A closed ball in $\Rbb^n$ centered at $\bar{x}\in \Rbb^n$ 
with radius $\rho>0$ is denoted as $B_{\rho}(\bar{x})$.
In nonsmooth nonconvex optimization literature, both Clarke subgradient~\cite{clarke1983} and lower regular subgradient (8.3,~\cite{rockafellar1998}) have been widely adopted in analysis. 
While they both enjoy the outer/upper-semicontinuity property necessary in establishing convergence (6.6,~\cite{rockafellar1998} ), 
Clarke subdifferential, denoted by $\bar{\partial} r{(\bar{x})}$, of function $r(\cdot)$ at $\bar{x}$
is used in this  work.
In addition, the less common upper regular/general subgradient offers a critical view in discussing the properties of interest.  

The lower regular subdifferential of $r(\cdot)$ at point $\bar{x}$, 
denoted as $\hat{\partial} r(\bar{x})$, is defined by  
\begin{equation}\label{eqn:subgradient-def}
\centering
 \begin{aligned}
	\hat{\partial} r (\bar{x}) = \left\{ g\in\Rbb^n |\liminf_{ \substack{x\to \bar{x}\\x\neq \bar{x} }}\frac{r(x)-r(\bar{x})-\langle g,x-\bar{x}\rangle  }{\norm{x-\bar{x}}}\geq 0\right\},
 \end{aligned}
\end{equation}
where the 2-norm $\norm{\cdot}$ is used and $\langle \cdot \rangle$ is the inner product in $\Rbb^n$.
The notion of \textit{f-attentive} convergence, which is crucial for the concept of general subgradient, is defined as 
\begin{equation}\label{def:f-attentive}
\centering
 \begin{aligned}
	 x^{\nu} \xrightarrow[r]{} \bar{x} \quad \ratio\Leftrightarrow \quad x^{\nu}\to\bar{x} \quad \text{with} \quad r(x^{\nu})\to r(\bar{x}),
 \end{aligned}
\end{equation}
where $\{x^{\nu}\}$ is a sequence of points. Given the assumption of Lipschitz $r(\cdot)$, this becomes trivial.
If there exists a sequence $\{x^{\nu}\}$ such that $x^{\nu} \xrightarrow[r]{} \bar{x}$ and $g^{\nu}\in \hat{\partial}r(x^{\nu})$ with $g^{\nu}\to g$,
$g$ is called a lower general subgradient of $r(\cdot)$ at $\bar{x}$, written as $g \in \partial r(\bar{x})$. 
If a Lipschitz function $r$ is lower regular (or subdifferentially regular as in 7.25,~\cite{rockafellar1998}), 
then its lower general subdifferential is the same as its lower regular subdifferential (Corollary 8.11,~\cite{rockafellar1998}). 
Due to the popularity of lower-type properties in optimization, lower general subgradient is often simply called general subgradient
(8.3,~\cite{rockafellar1998}).

Next, as introduced in~\cite{rockafellar1998} and detailed in~\cite{mordukhovich2004upp}, upper regular subdifferential is defined through
\begin{equation}\label{def:upp-subgradient}
\centering
 \begin{aligned}
	 \hat{\partial}^+ r(\bar{x}) \coloneqq& - \hat{\partial} (-r)(\bar{x})
	 =\left\{ g\in\Rbb^n |\limsup_{ \substack{x\to \bar{x}\\x\neq \bar{x} }}\frac{r(x)-r(\bar{x})-\langle g,x-\bar{x}\rangle  }{\norm{x-\bar{x}}}\leq 0\right\}.
 \end{aligned}
\end{equation}
The general upper subdifferential at $\bar{x}$ is given as $\partial^+ r(\bar{x}) \coloneqq -\partial (-r)(\bar{x})$.
A function $r(\cdot)$ is called upper regular if $-r(\cdot)$ is lower regular~\cite{rockafellar1998}. 
Some examples of upper regular functions include all continuous concave functions and all functions strictly differentiable. 
The upper/lower general subdifferential is locally bounded for Lipschitz continuous function (9.13,~\cite{rockafellar1998}). 
An important relationship between an upper regular subgradient and Clarke subdifferential is established next.

\begin{lemma}\label{lem:low-sub-clarke}
	(lower subdifferential and Clarke subdifferential) Let $-r(\cdot)$ be a Lipschitz and lower regular function at $\bar{x}$, then its Clarke subdifferential and lower general subdifferential at $\bar{x}$ are equivalent, \textit{i.e.} $ \partial (-r)(\bar{x}) = \bar{\partial} (-r)(\bar{x})$.
\end{lemma}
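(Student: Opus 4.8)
The plan is to derive the statement by assembling three standard facts from variational analysis, applied to the locally Lipschitz function $g := -r$, which by hypothesis is lower regular (subdifferentially regular) at $\bar{x}$.

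First I would invoke the convex-hull representation of the Clarke subdifferential: for a function that is locally Lipschitz around $\bar{x}$ one has $\bar{\partial} g(\bar{x}) = \mathrm{conv}\,\partial g(\bar{x})$, where $\partial g(\bar{x})$ denotes the general (limiting) subdifferential (see~\cite{clarke1983}, and 8.49,~\cite{rockafellar1998}). The closure that appears in the fully general form of this identity may be dropped here because $g$ is Lipschitz near $\bar{x}$, so $\partial g(\bar{x})$ is nonempty and compact by local boundedness of the general subdifferential for Lipschitz functions (9.13,~\cite{rockafellar1998}), and the convex hull of a compact subset of $\Rbb^n$ is compact, hence closed. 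Second, I would use subdifferential regularity of $g$ at $\bar{x}$: it gives that the general subdifferential coincides with the regular subdifferential, $\partial g(\bar{x}) = \hat{\partial} g(\bar{x})$ (Corollary 8.11,~\cite{rockafellar1998}), which is precisely the fact already recalled in the preliminaries. Third, I would note that the regular subdifferential $\hat{\partial} g(\bar{x})$ is always closed and convex, being an intersection of closed half-spaces by virtue of the $\liminf$ inequality in~\eqref{eqn:subgradient-def}.

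Combining the three facts yields $\mathrm{conv}\,\partial g(\bar{x}) = \mathrm{conv}\,\hat{\partial} g(\bar{x}) = \hat{\partial} g(\bar{x}) = \partial g(\bar{x})$, so $\bar{\partial} g(\bar{x}) = \partial g(\bar{x})$; that is, $\bar{\partial}(-r)(\bar{x}) = \partial(-r)(\bar{x})$, as claimed. There is essentially no hard analytical obstacle: the argument is a bookkeeping exercise with the appropriate citations. The only point that deserves a line of care is the justification that the closure in the identity $\bar{\partial} g = \overline{\mathrm{conv}}\,\partial g$ is superfluous under the Lipschitz hypothesis, which is exactly where compactness of $\partial g(\bar{x})$ for Lipschitz $g$ is used; everything else is a direct appeal to the cited results of~\cite{rockafellar1998}.
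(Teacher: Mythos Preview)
Your proposal is correct and follows essentially the same route as the paper: invoke the convex-hull representation $\bar{\partial}(-r)(\bar{x}) = \mathrm{conv}\,\partial(-r)(\bar{x})$ from 8.49 and 9.13 of~\cite{rockafellar1998}, use regularity to identify $\partial(-r)(\bar{x}) = \hat{\partial}(-r)(\bar{x})$ via 8.11, and then use convexity of $\hat{\partial}(-r)(\bar{x})$ (the paper cites 8.6) to collapse the convex hull. Your extra remark about dropping the closure via compactness is a nice clarification but does not change the argument.
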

\begin{proof}
	This Lemma is readily available from the results in~\cite{rockafellar1998}. By 8.49 and 9.13 in~\cite{rockafellar1998}, for a Lipschitz continuous function $-r(\cdot)$, we have its Clarke subdifferential  
	$\bar{\partial} (-r)(\bar{x}) = \text{con}\partial (-r)(\bar{x})$.
	For a lower regular function $-r(\cdot)$, $\hat{\partial} (-r)(\bar{x}) = \partial (-r)(\bar{x})$ (8.11,~\cite{rockafellar1998}).
	Given the convexity of lower regular subdifferential $\hat{\partial} (-r)(\bar{x})$ (8.6,~\cite{rockafellar1998}), 
	 $\bar{\partial} (-r)(\bar{x})= \text{con}\hat{\partial} (-r)(\bar{x})=\hat{\partial} (-r)(\bar{x})=\partial(-r)(\bar{x})$.
\end{proof}

\begin{lemma}\label{lem:upp-sub-clarke}
	(upper subdifferential and Clarke subdifferential) Let $r(\cdot)$ be Lipschitz and upper regular at $\bar{x}$, 
	then its upper general subdifferential and Clarke subdifferential at $\bar{x}$ are equivalent. In particular, if $g \in \partial^+ r(\bar{x})$, then $g \in \bar{\partial} r(\bar{x})$.
\end{lemma}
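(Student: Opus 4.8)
The plan is to reduce this statement to Lemma~\ref{lem:low-sub-clarke} by passing to $-r(\cdot)$ and then exploiting the elementary behavior of both subdifferentials under negation. First I would note that since $r(\cdot)$ is Lipschitz and upper regular at $\bar{x}$, the function $-r(\cdot)$ is Lipschitz and lower regular at $\bar{x}$ directly from the definition of upper regularity given in Section~\ref{sec:pri}. Hence Lemma~\ref{lem:low-sub-clarke} applies to $-r(\cdot)$ and yields $\partial(-r)(\bar{x}) = \bar{\partial}(-r)(\bar{x})$.

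Next I would invoke the symmetry property $\bar{\partial}(-r)(\bar{x}) = -\bar{\partial} r(\bar{x})$ of the Clarke subdifferential, which is immediate from its definition via the generalized directional derivative, since $(-r)^{\circ}(\bar{x};v) = r^{\circ}(\bar{x};-v)$ (see~\cite{clarke1983}). Combining this with the definition $\partial^+ r(\bar{x}) \coloneqq -\partial(-r)(\bar{x})$ recorded in~\eqref{def:upp-subgradient} gives the chain
\[
\partial^+ r(\bar{x}) = -\partial(-r)(\bar{x}) = -\bar{\partial}(-r)(\bar{x}) = -\bigl(-\bar{\partial} r(\bar{x})\bigr) = \bar{\partial} r(\bar{x}),
\]
which is the asserted equality of the general upper subdifferential and the Clarke subdifferential at $\bar{x}$. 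The ``in particular'' clause is then trivial: set equality gives $g \in \partial^+ r(\bar{x}) \Rightarrow g \in \bar{\partial} r(\bar{x})$.

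There is essentially no hard step here; the result is a corollary of Lemma~\ref{lem:low-sub-clarke}. The only point deserving a little care is making explicit the two negation symmetries — that of the Clarke subdifferential and that built into the definition $\partial^+(\cdot) \coloneqq -\partial(-\cdot)$ — so that the minus signs cancel correctly; everything else is bookkeeping with the definitions of Section~\ref{sec:pri}.
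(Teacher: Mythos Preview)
Your proof is correct and follows essentially the same route as the paper: reduce to Lemma~\ref{lem:low-sub-clarke} by passing to $-r(\cdot)$, then combine the negation symmetry of the Clarke subdifferential (2.3.1 in~\cite{clarke1983}) with the definition $\partial^+ r(\bar{x}) = -\partial(-r)(\bar{x})$. The paper phrases the intermediate step via $\hat{\partial}^+ r(\bar{x})$ rather than $\partial^+ r(\bar{x})$, but since $-r$ is lower regular these coincide and the argument is the same.
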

\begin{proof}
	We rely on the properties of lower regular functions in Lemma~\ref{lem:low-sub-clarke}.
	Since $r(\cdot)$ is upper regular, by definition $-r(\cdot)$ is lower regular and  $ -\hat{\partial} (-r)(\bar{x}) = \hat{\partial}^+ r(\bar{x}) =\partial^+ r(\bar{x})$. 
By the symmetry property of the Clarke
	subgradient for locally Lipschitz functions (2.3.1,~\cite{clarke1983}), we have  $ \bar{\partial} r (\bar{x})=-\bar{\partial} (-r) (\bar{x})$. 
	By Lemma~\ref{lem:low-sub-clarke}, $ -\bar{\partial} (-r) (\bar{x}) =  -\hat{\partial} (-r)(\bar{x}) $.  
	Therefore, $ \partial^+ r(\bar{x}) = -\hat{\partial} (-r)(\bar{x}) = \bar{\partial} r (\bar{x})$.
	As a result, $g \in \partial^+ r(\bar{x})$ is also a Clarke subgradient, \textit{i.e.} $g\in \bar{\partial} r(\bar{x})$.
\end{proof}
Due to their equivalence, for upper regular functions, Clarke subgradient and upper general subgradient can be used interchangeably.
Moving on to more restrictive assumptions than regularity, lower-$C^1$ functions, introduced in~\cite{Spingarn1981SubmonotoneSO} and~\cite{rockafellar1998}, are commonly assumed in nonsmooth optimization and have a few equivalent definitions~\cite{daniilidis2004}. 
A function $r(\cdot):O\to \Rbb$, where $O\subset \Rbb^n$ is open is said to be lower-$C^k$ on $O$, if on some neighborhood $V$ of each $\bar{x}\in O$ there is a representation
\begin{equation}\label{eqn:lowc1-def-0}
\centering
 \begin{aligned}
	 r(x) =  \underset{\substack{t \in T}}{\text{max}} \ r_t(x),
 \end{aligned}
\end{equation}
where the functions $r_t(\cdot)$ are of class $C^k$ on $V$ and the index set $T$ is a compact space such that $r_t(\cdot)$ 
and all of its partial derivatives through order $k$ are jointly continuous on $(t,x)\in T\times V$. 
Similarly, a function is upper-$C^k$ on $O$ if on a neighborhood $V$ of $\bar{x} \in O$ we can write
\begin{equation}\label{eqn:uppc1-def-0}
\centering
 \begin{aligned}
	 r(x) =  \underset{\substack{t \in T}}{\text{min}} \ r_t(x),
 \end{aligned}
\end{equation}
where $r_t(\cdot)$ are of class $C^k$ on $V$. The set $T$ is compact and $r_t(\cdot)$ 
and all of the partial derivatives through order $k$ are jointly continuous on $(t,x)\in T\times V$.

A widely used $T$ is a closed and bounded subset of $\Rbb^p$.  Thus, if
\begin{equation}\label{eqn:uppc1-def-1}
\centering
 \begin{aligned}
	 r(x) =  \underset{\substack{t \in T}}{\text{min}} \ p(t,x)
 \end{aligned}
\end{equation}
for all $x\in O$, and $p(\cdot,\cdot)$ and its first- and second-order partial derivatives in $x$ 
depend continuously on $(t,x)$, $r(\cdot)$ is upper-$C^2$.
Rockafellar~\cite{rockafellar1981Fav} and Clarke~\cite{clarke1995Proximal} further simplified the objective in~\eqref{eqn:uppc1-def-1} for Lipschitz functions. If $r(\cdot):U\to\Rbb$, where $U\subset\Rbb^n$ is an open, convex and bounded set, is Lipschitz, then it is upper-$C^2$ 
if there exists $\sigma>0$, a compact set $S$ and continuous functions $b(\cdot):S\to\Rbb^n, c(\cdot):S\to\Rbb$ such that
\begin{equation}\label{eqn:uppc1-def-lip}
\centering
 \begin{aligned}
	 r(x) =  \underset{\substack{s \in S}}{\text{min}} \{ \sigma \norm{x}^2 - \langle b(s),x\rangle -c(s) \}
 \end{aligned}
\end{equation}
for $\forall x\in U$. 

While the original definition has clear indication for two-stage optimization problems, 
an alternative definition based on the function and subgradient value is more useful in analysis. 
A function is called lower-$C^1$ at $\bar{x}$ if
\begin{equation}\label{eqn:lowc1-def}
\centering
 \begin{aligned}
	 \forall \epsilon >0, \ \exists \ \rho >0, s.t. \ \forall x,x' \in B_{\rho}(\bar{x}),\ g \in \partial r(x),\\
	 r(x') - r(x) - \langle g,x'-x\rangle \geq -\epsilon \norm{x'-x}.
 \end{aligned}
\end{equation}
A function is lower-$C^1$ on an open set $O$ if it is lower-$C^1$ for all $x\in O$.
By definition, a function $r(\cdot)$ is upper-$C^1$ if $-r(\cdot)$ is lower-$C^1$ at $\bar{x}$. 


An intuitive, equivalent definition of a finite-valued, lower-$C^2$ function $r(\cdot)$ on an open set $O$ is that for any point $\bar{x}\in O$, there exists a threshold value $\rho_{0}>0$ such that 
$r(\cdot)+\frac{\rho}{2}\norm{\cdot}^2$ is convex on an open neighborhood of $\bar{x}$ for all $\rho>\rho_0$. 
It is worth noting that another popular property: prox-regularity is closely related to lower-$C^2$.
Lower-$C^2$ functions are prox-regular and for Lipschitz continuous functions, 
prox-regularity also guarantees lower-$C^2$ on an open set $O\subset\Rbb^n$ (13.33,~\cite{rockafellar1998}). 

Since a function $r(\cdot)$ is called upper-$C^2$ if and only if $-r(\cdot)$ is lower-$C^2$ at $\bar{x} \in \Rbb^n$, for a finite, Lipschitz and upper-$C^2$ function $r(\cdot)$ on an open set $O$ with $\bar{x}\in O$ , there exists $\rho>0$, such that
\begin{equation}\label{eqn:lowc2-def}
\centering
 \begin{aligned}
	 -r(x) + r(\bar{x}) - \langle -g,x-\bar{x}\rangle \geq -\frac{\rho}{2}\norm{x-\bar{x}}^2, 
 \end{aligned}
\end{equation}
where $-g\in\partial (-r)(\bar{x})$ and $x,\bar{x}\in O$. Since $-r(\cdot)$ is lower-$C^2$ and thus lower regular, 
$-g\in \hat{\partial} (-r)(\bar{x})$. By definition, $g\in \partial^+ (r)(\bar{x})$ and by Lemma~\ref{lem:upp-sub-clarke}, $g\in \bar{\partial} r(\bar{x})$.
Inequality~\eqref{eqn:lowc2-def} is equivalent to 
\begin{equation}\label{eqn:uppc2-def}
\centering
 \begin{aligned}
	 r(x) - r(\bar{x}) - \langle g,x-\bar{x} \rangle \leq \frac{\rho}{2}\norm{x-\bar{x}}^2, 
 \end{aligned}
\end{equation}
where $g \in \partial^+ r(\bar{x})$ and $x,\bar{x}\in O$.
Notice that there exists a uniform $\rho$ such that~\eqref{eqn:uppc2-def} stands 
for all $x\in D \subset O$ and $g\in \partial^+ r(\bar{x})$, where $D$ is compact~(10.54,~\cite{rockafellar1998},~\cite{hare2010}).
We refer to~\eqref{eqn:uppc2-def} as the upper-$C^2$ inequality. 
It is worth pointing out that upper-$C^2$ does not guarantee differentiability, lower-$C^1$ or lower regularity. 
It does ensure upper regularity where the upper subdifferential is equivalent to Clarke subdifferential.
A simple example is
\begin{equation} \label{eqn:opt-ms-appx}
 \centering
  \begin{aligned}
	  f(x) =
              \begin{cases}
	       \begin{aligned}
	        & x,  &-1\leq &x\leq 0, \\
		&\frac{1}{2}x,  & 0\leq &x\leq 1,	  
	       \end{aligned}
	      \end{cases}
  \end{aligned}
\end{equation}
which is concave and not differentiable or lower regular at $x=0$.

A large number of nonsmooth nonconvex functions satisfy some of the properties described in this section. They allow 
the use of Clarke subgradient in the analysis of global convergence.
To obtain desired properties for the objective function, regularization techniques might be necessary. 

\subsection{Smoothing of the second-stage problem}\label{sec:smoothing}
In many two-stage stochastic optimization problems, the second-stage solution function $r(\cdot)$ 
while lacking differentiability, satisfy the conditions for upper-$C^2$ property. 
For example, if the coupling of variables are in the smooth objective only, then by~\eqref{eqn:uppc1-def-1}, 
$r(\cdot)$ is upper-$C^2$.  
In our target application, the first-stage variable $x$  
is coupled linearly in the constraints of the second-stage problems~\cite{petra_21_gollnlp}. 
If the coupling exists in inequality however, upper-$C^2$ conditions might not be satisfied.
Regularization of the problem could help smooth out the non-differentiability.
To see this, we first make the following assumption for this section based on observation from SCACOPF problems.
\begin{assumption}\label{assp:rec-linear}
	The problem~\eqref{eqn:opt-rc} can be reformulated with uncoupled objective and coupled constraints that are linear in the first-stage variable $x$. 
\end{assumption}
Using non-negative slack variables $s^l_i\geq 0,s^u_i\geq 0$, the coupled inequality constraints in~\eqref{eqn:opt-rc} can be
converted to equality constraints with 
\begin{equation} \label{eqn:opt-rc-cons-req}
 \centering
  \begin{aligned}
    d(x,y_i) - d^l_i &= s^l_i\\
    d^u_i - d(x,y_i) &= s^u_i
  \end{aligned}
\end{equation}
For simplicity reasons, the subscript $i$ for the $i$th second-stage problem is dropped.
Moreover, the slack variables $s^l_i,s^u_i$ can be regarded as part of the 
optimization variables $y$. Clearly the relevant equality constraints are linear in $s$ by definition. 
Separating the coupled and uncoupled constraints, by Assumption~\ref{assp:rec-linear}, we denote 
the coupled constraints as 
\begin{equation} \label{eqn:opt-rc-cons-req-lin}
 \centering
  \begin{aligned}
	   Wx  - h(y) = 0,\\
  \end{aligned}
\end{equation}
where the slack variables are considered part of $y$ 
and $W$ is the corresponding linear operator. Using one of the standard matrix norms, 
$W$ is assumed to be bounded with $\norm{W}=w$.
The second-stage problem from~\eqref{eqn:opt-rc} now becomes
\begin{equation} \label{eqn:opt-rc-eq-2cons}
 \centering
  \begin{aligned}
   r(x) =& \underset{\substack{y }}{\text{minimize}} 
	  & & p(y)\\
   &\text{subject to}
          & & Wx  - h(y) = 0,\\
	  &&& c(y) =c_{E,2}\\
	  &&& d^l\leq d(y) \leq d^u \\
   &&& y^l \leq y \leq y^u,\\
  \end{aligned}
\end{equation}
where $c_{E,2}$ is used to emphasize the uncoupled constraint.
Given smooth $h(\cdot)$, $r(\cdot)$ still might not be differentiable or upper-$C^2$. 
However, it is possible now to apply the quadratic penalty method~\cite{Nocedal_book} to achieve upper-$C^2$.
To illustrate both points, two simple examples are presented where differentiability can be improved. 
Example 1 is a one-dimensional optimization problem with equality constraint given in~\eqref{eqn:opt-rc-eg1} 
\begin{equation} \label{eqn:opt-rc-eg1}
 \centering
  \begin{aligned}
   r(x) =& \underset{\substack{y}}{\text{min}} 
	  & & y\\
   &\text{s.t.}
	  & & y^2 = x\\
	  &&&  y\geq 0,
  \end{aligned}
\end{equation}
where $y\in\Rbb$ and $x\geq 0$.
It is obvious that the solution function is $r(x) = \sqrt{x}, x\geq 0$,
which is continuous yet not Lipschitz continuous at $x=0$.
Using the quadratic penalty function with coefficient $\mu$, the optimization problem
is smoothed to~\eqref{eqn:opt-rc-eg1-al}
\begin{equation} \label{eqn:opt-rc-eg1-al}
 \centering
  \begin{aligned}
   r_\mu(x) =& \underset{\substack{y}}{\text{min}} 
	  & & y + \mu\norm{y^2-x}^2\\
   &\text{s.t.}
	  & & y \geq 0.
  \end{aligned}
\end{equation}
The smoothed solution function $r_{\mu}(\cdot)$ becomes
Lipschitz continuous at $x=0$, as illustrated on the left plot in Figure~\ref{fig:smoothing}.
The value of $\mu$ is a trade-off between accurate approximation of $r(\cdot)$ and 
the range of the transition period close to $x=0$.
Example 2 considers an optimization problem on $y\in\Rbb$ with an inequality constraint 
\begin{equation} \label{eqn:opt-rc-eg2}
 \centering
  \begin{aligned}
   r(x) =& \underset{\substack{y}}{\text{min}} 
	  & & ay^2+by\\
   &\text{s.t.}
	  & & y \geq x\\
	  &&&  y \geq 0.
  \end{aligned}
\end{equation}
The solution function $r(\cdot)$ is differentiable but not continuously differentiable at $x=0$. 
With quadratic penalty and a slack variable $s$ for the inequality constraint, the problem transforms to
\begin{equation} \label{eqn:opt-rc-eg2-al}
 \centering
  \begin{aligned}
	  r_{\mu}(x) =& \underset{\substack{y}}{\text{min}} 
	  & & ay^2+by+\mu\norm{x+s-y}^2\\
   &\text{s.t.}
	  & & y,s \geq 0.
  \end{aligned}
\end{equation}
The function $r_{\mu}(\cdot)$ is then smoothed into a continuously differentiable one as seen on the right in Figure~\ref{fig:smoothing}.
The smoothed function $r_{\mu}(\cdot)$ in both examples are now upper-$C^2$.
\begin{figure}[H]
  \centering
  \includegraphics[width=0.95\textwidth]{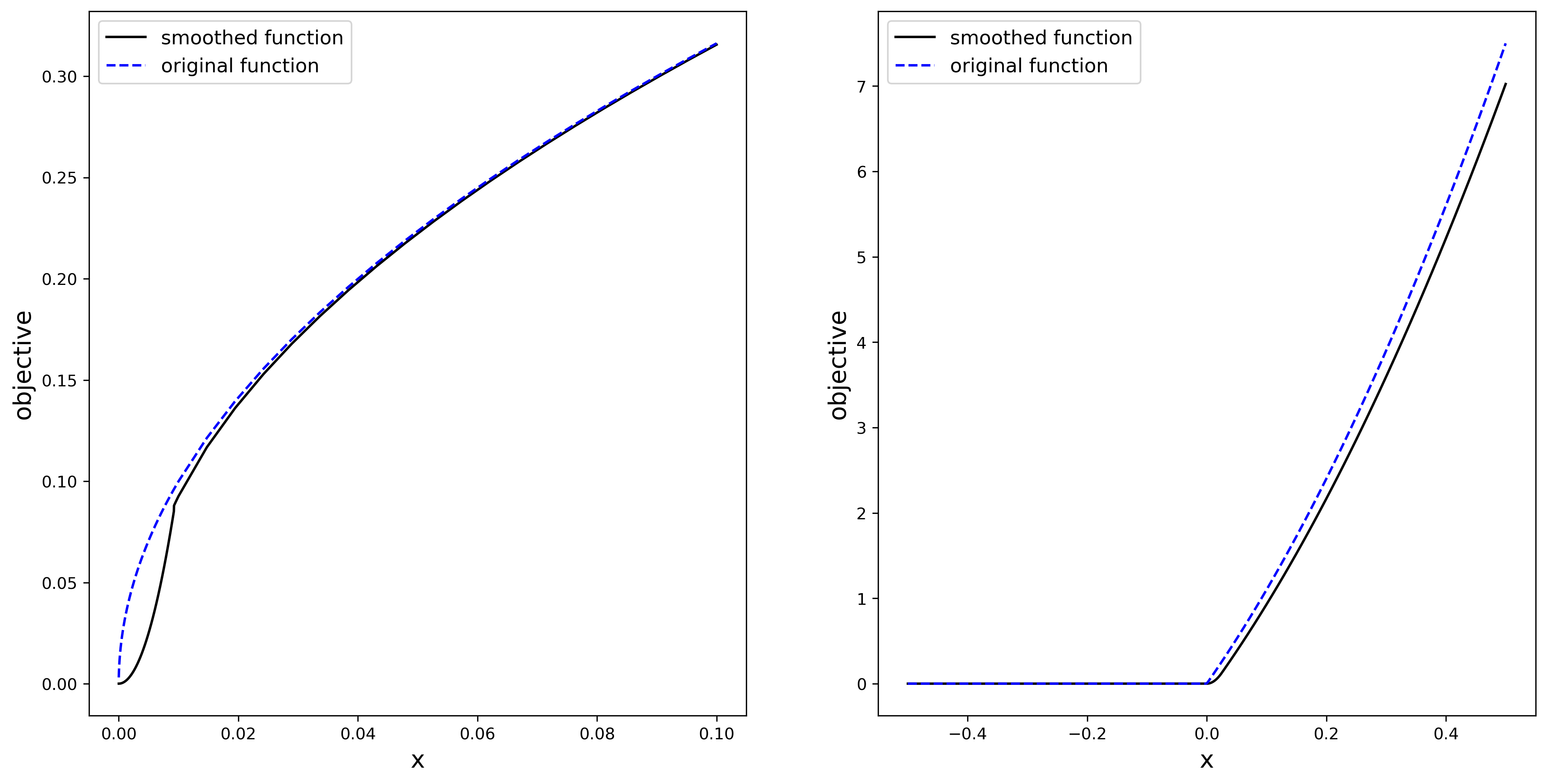}
 \caption{Quadratic penalty smoothing example: example 1 on the left, example 2 on the right }
 \label{fig:smoothing}
\end{figure}
Similarly, for the more general case, it is possible to obtain desirable properties such as upper-$C^2$ 
for second-stage solution functions by incorporating the coupled constraints into the objective through a quadratic penalty 
such as 
\begin{equation} \label{eqn:opt-rc-mu-al}
 \centering
  \begin{aligned}
	  r_{\mu}(x) =& \underset{\substack{y}}{\text{minimize}} 
	  & & \mu\norm{Wx-h(y)}^2+p(y) \\
   &\text{subject to}
	  & & c(y) =c_{E,2}\\
	  &&& d_l\leq d(y) \leq d_u \\
   &&& y^l \leq y \leq y^u\\
  \end{aligned}
\end{equation}
where $\mu$ is the penalty coefficient.
As $\mu \to \infty$, the feasible accumulation points of the solutions to
~\eqref{eqn:opt-rc-mu-al} become the solution to that of~\eqref{eqn:opt-rc}~\cite{Nocedal_book}. 
It is worth pointing out that the coupling part of $x$ is converted into a squared distance function, 
which has been studied extensively~\cite{poliquin2000,rockafellar1998}. 
While we focus on the linearly coupled constraint from SCACOPF, as the goal is to pursue upper-$C^2$ of $r(\cdot)$, it 
is clear from its definition~\eqref{eqn:uppc1-def-1} that linearity is not necessary. In fact a smooth coupling in both $x$ and $y$ 
with quadratic penalty would suffice.

The properties of $r_{\mu}(\cdot)$ from~\eqref{eqn:opt-rc-mu-al} is examined next.
The feasible set of $y$ is denoted as $\Phi(x)$.
An important fact that is repeatedly used is that $\Phi(x)\equiv \Phi$, independent of $x$ due to 
the regularization.
For $r_{\mu}(\cdot)$ to be continuous at $x$,  the problem needs to have certain bounded properties.
For example, it is common to assume coercivity~\cite{dao2016} or level-bounded objective functions~\cite{dao2015}. On the 
other hand, in our target applications, 
the variables $x$ and $y$ are bounded above and below by real, finite values. The slack 
variable $s$, defined through functions on $x,y$, are effectively bounded as well.
Hence, we choose to assume bounded domain for $x$ and compact domain for $y$, denoted as $X\in\Rbb^n$ and $Y\in\Rbb^p$, respectively for simplicity.
Notice again $Y$ is now independent of $x$.
The optimal solution set is denoted as $S(x)\subset Y$ and the continuity result 
is given in Lemma~\ref{lemma:continuity} based on Chapter 4 from~\cite{shapiro2000}, the proof of which 
requires additional definitions and is left for the Appendix.
\begin{lemma}\label{lemma:continuity}
The optimal value function of the smoothed second-stage problem $r_{\mu}(\cdot)$ is continuous,
and the multifunction $x\to S(x)$ is upper semicontinuous at $x$
\end{lemma}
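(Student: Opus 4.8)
The plan is to treat this as a textbook parametric-optimization stability statement and to invoke Berge-type arguments (equivalently, the stability results for optimal value functions and solution maps collected in Chapter~4 of~\cite{shapiro2000}). First I would set $\phi_{\mu}(x,y):=\mu\norm{Wx-h(y)}^2+p(y)$ and record that $\phi_{\mu}$ is jointly continuous on $X\times Y$: $W$ is a bounded linear operator and $h,p$ are smooth, so both the squared-norm penalty and $p$ depend continuously on $(x,y)$ (with $\mu$ fixed). The decisive structural point, already stressed in the text, is that after the quadratic-penalty regularization in~\eqref{eqn:opt-rc-mu-al} the feasible set $\Phi(x)\equiv Y$ is independent of $x$ and compact; hence the feasible-set multifunction is constant, compact-valued, and trivially both inner and outer semicontinuous, and $S(x)$ is nonempty and compact for every $x$ by the Weierstrass theorem on $Y$.

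Granting joint continuity of $\phi_{\mu}$ and the constant feasible set, continuity of $r_{\mu}$ reduces to the two one-sided estimates. For the upper estimate, fix $x\in X$ and $\bar{y}\in S(x)$; for any $x^{k}\to x$ we have $r_{\mu}(x^{k})\le\phi_{\mu}(x^{k},\bar{y})\to\phi_{\mu}(x,\bar{y})=r_{\mu}(x)$, where $\bar{y}\in Y$ is an admissible competitor for every $x^{k}$ precisely because $Y$ does not vary with $x$. For the lower estimate, given $x^{k}\to x$ pick $y^{k}\in S(x^{k})$, pass to a subsequence realizing $\liminf_{k}r_{\mu}(x^{k})$, and, using compactness of $Y$, extract a further subsequence along which $y^{k}\to\bar{y}\in Y$; then $\liminf_{k}r_{\mu}(x^{k})=\lim_{j}\phi_{\mu}(x^{k_{j}},y^{k_{j}})=\phi_{\mu}(x,\bar{y})\ge r_{\mu}(x)$. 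Combining the two gives $r_{\mu}(x^{k})\to r_{\mu}(x)$, i.e. continuity of $r_{\mu}$ on $X$.

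For upper semicontinuity of $x\mapsto S(x)$, since the range $Y$ is compact it suffices to verify the closed-graph property, which then coincides with the $\varepsilon$-neighborhood notion of outer semicontinuity. Take $x^{k}\to x$ and $y^{k}\in S(x^{k})$ with $y^{k}\to\bar{y}$; then $\bar{y}\in Y$, and by joint continuity of $\phi_{\mu}$ together with the continuity of $r_{\mu}$ just established, $\phi_{\mu}(x,\bar{y})=\lim_{k}\phi_{\mu}(x^{k},y^{k})=\lim_{k}r_{\mu}(x^{k})=r_{\mu}(x)$, so $\bar{y}\in S(x)$. Alternatively, the Appendix proof can simply cite the relevant propositions of~\cite{shapiro2000}, whose hypotheses (continuous objective, continuous and uniformly compact feasible-set mapping) are met, after recalling the definitions of inner/outer semicontinuity of multifunctions.

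I do not expect a real obstacle here; the only points requiring care are (i) making the joint continuity of the penalized objective explicit, (ii) being precise about which convention of ``upper semicontinuous multifunction'' is used, noting that compactness of $Y$ reconciles the closed-graph and neighborhood definitions, and (iii) recording nonemptiness and compactness of $S(x)$ from Weierstrass on $Y$. The entire argument would fail if $\Phi(x)$ still depended on $x$ through the coupled constraints; it is exactly the regularization~\eqref{eqn:opt-rc-mu-al} that removes this dependence, which is why the lemma is stated for $r_{\mu}$ rather than for the unregularized $r$.
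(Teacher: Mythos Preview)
Your proposal is correct. The paper's own proof takes the citation route you mention at the end: it states Proposition~4.4 of~\cite{shapiro2000} and verifies its four hypotheses (joint continuity of the objective, closedness of the feasible-set multifunction, the inf-compactness/level-set condition, and the neighborhood-intersection condition), relying throughout on the same structural facts you isolate, namely that $\Phi(x)\equiv Y$ is independent of $x$ and compact. Your direct Berge-type argument---the two one-sided estimates for $r_{\mu}$ and the closed-graph verification for $S$---is more self-contained and arguably clearer, since the constant compact feasible set makes the level-set and neighborhood conditions of~\cite{shapiro2000} trivially satisfied anyway; the paper's approach buys brevity by outsourcing the mechanics, while yours makes explicit exactly where each hypothesis is used. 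Both are sound and rest on the same key observation that the regularization decouples $\Phi$ from $x$.
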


\noindent In addition, the compact domain and linear coupling of $r_{\mu}(\cdot)$ leads to the 
following Lemma.
\begin{lemma}\label{lem:Lip}
The optimal solution function $r_{\mu}(\cdot)$ is Lipschitz continuous on its domain.
\end{lemma}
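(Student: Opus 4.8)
The plan is to exploit the structure exposed in~\eqref{eqn:opt-rc-mu-al}: the dependence of the objective on the first-stage variable $x$ is confined entirely to the smooth quadratic penalty term $\mu\norm{Wx-h(y)}^2$, while the feasible set $\Phi$ over which we minimize in $y$ is, as emphasized above, independent of $x$. Writing $F_{\mu}(x,y)\coloneqq \mu\norm{Wx-h(y)}^2+p(y)$, we have $r_{\mu}(x)=\min_{y\in\Phi}F_{\mu}(x,y)$, with the minimum attained because $\Phi$ is compact and $F_{\mu}(x,\cdot)$ is continuous (this also gives $\operatorname{dom}r_{\mu}\supseteq X$, so ``on its domain'' amounts to ``on $X$''). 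The main idea is then to show that for each fixed $y\in\Phi$ the map $x\mapsto F_{\mu}(x,y)$ is Lipschitz on $X$ with a constant $L$ that does not depend on $y$, and to conclude either by the optimal-solution comparison below or, equivalently, by the elementary fact that a finite pointwise infimum of a family of $L$-Lipschitz functions is itself $L$-Lipschitz.

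For the uniform bound I would differentiate in $x$: $\nabla_x F_{\mu}(x,y)=2\mu W^{\top}(Wx-h(y))$. Since $X$ is bounded, $\norm{Wx}\le w R_X$ with $R_X\coloneqq\sup_{x\in X}\norm{x}<\infty$ and $\norm{W}=w$; since $\Phi$ is compact and $h(\cdot)$ is continuous, $M_h\coloneqq\max_{y\in\Phi}\norm{h(y)}<\infty$. Hence $\norm{\nabla_x F_{\mu}(x,y)}\le 2\mu w\,(w R_X+M_h)$ for every $x$ in (the convex hull of) $X$ and every $y\in\Phi$; set $L\coloneqq 2\mu w\,(w R_X+M_h)$. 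Applying the mean value inequality along the segment joining any $x_1,x_2\in X$ then yields $|F_{\mu}(x_1,y)-F_{\mu}(x_2,y)|\le L\norm{x_1-x_2}$, uniformly in $y\in\Phi$.

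To finish, for $i=1,2$ pick $y_i\in S(x_i)$ (nonempty by the Weierstrass theorem, as implicit in Lemma~\ref{lemma:continuity}). Then
\[
r_{\mu}(x_1)-r_{\mu}(x_2)=F_{\mu}(x_1,y_1)-F_{\mu}(x_2,y_2)\le F_{\mu}(x_1,y_2)-F_{\mu}(x_2,y_2)\le L\norm{x_1-x_2},
\]
and interchanging the roles of $x_1$ and $x_2$ gives $|r_{\mu}(x_1)-r_{\mu}(x_2)|\le L\norm{x_1-x_2}$, i.e.\ $r_{\mu}(\cdot)$ is $L$-Lipschitz on $X$.

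The step I would handle most carefully is the uniformity of $L$ in $y$: this is precisely where compactness of $\Phi$ (so that $M_h<\infty$), boundedness of the domain $X$ (so that $\norm{Wx}$ is controlled), and the assumption that the coupling is realized by a bounded linear operator $W$ all enter, and none of them can be dropped without losing a finite gradient bound. A minor technical point is that if $X$ is not convex one should run the mean value argument on its convex hull, which is still bounded; this is harmless since each $F_{\mu}(\cdot,y)$ is smooth on all of $\Rbb^n$. Finally, note that $p(y)$ never enters $L$ because it carries no $x$-dependence, so nothing beyond its finiteness on $\Phi$ (automatic from smoothness and compactness) is needed.
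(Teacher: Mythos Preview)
Your proof is correct and follows essentially the same approach as the paper: both exploit that the feasible set $\Phi$ is independent of $x$, pick $y_i\in S(x_i)$, and use the optimality comparison $r_\mu(x_1)\le F_\mu(x_1,y_2)$ together with a uniform-in-$y$ bound on $|F_\mu(x_1,y)-F_\mu(x_2,y)|$. The only cosmetic difference is that the paper expands the quadratic $\norm{Wx_1-h_2}^2=\norm{W(x_1-x_2)+Wx_2-h_2}^2$ directly and bounds $\norm{Wx-h(y)}\le M$, arriving at $L=\mu w(wD+2M)$ with $D=\norm{x^u-x^l}$, whereas you bound the gradient $\nabla_x F_\mu$ and invoke the mean value inequality to get $L=2\mu w(wR_X+M_h)$; these are equivalent estimates.
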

\begin{proof}
Given that $x$ is bounded, the domain of $y$ is compact, and the continuous differentiability of the objective in~\eqref{eqn:opt-rc-mu-al},
$r_{\mu}(x)$ is bounded.
Let $M\in \Rbb >0$ be the upper bound of the absolute value of the coupled constraint, such that
\begin{equation} \label{eqn:opt-rc-mu-lem1}
\centering
 \begin{aligned}
	 \norm{Wx-h(y)} \leq M,  \quad \forall   x \in X,  y\in Y.
 \end{aligned}
\end{equation}
Denote by $x_1, x_2$ two points in the domain and $y_1 \in S(x_1), y_2 \in S(y_2)$ their corresponding optimal solutions.
	To simplify the notations, write $h_1=h(y_1),h_2=h(y_2),p_1=p(y_1),p_2=p(y_2)$. Since $y_1\in S(x_1)$ and 
	$Y$ is independent of $x$, we have
\begin{equation} \label{eqn:opt-rc-mu-lem2}
\centering
 \begin{aligned}
	 \mu\norm{Wx_1 - h_1}^2 +p_1&\leq \mu\norm{Wx_1 - h_2}^2 +p_2\\
	 &=\mu\norm{W(x_1-x_2)+Wx_2-h_2}^2+p_2\\
	 &\leq \mu\norm{W(x_1 -x_2)}^2+2\mu[W(x_1-x_2)]^T (Wx_2-h_2)   \\
	 &\ \ \ +\mu\norm{Wx_2- h_2}^2+p_2.\\
 \end{aligned}
\end{equation}
	Given $w = \norm{W}$,  
\begin{equation} \label{eqn:opt-rc-mu-lem3}
\centering
 \begin{aligned}
	 \mu\norm{Wx_1 - h_1}^2 +p_1- &\mu\norm{Wx_2- h_2}^2 -p_2\\
	 \leq& \mu\norm{W(x_1 -x_2)}^2
	 +2\mu[W(x_1-x_2)]^T[Wx_2-h_2] \\
	 \leq& \mu\norm{W(x_1-x_2)}\left(  \norm{W(x_1-x_2)} 
	    +2\norm{Wx_2-h_2}\right)\\
	 \leq& \mu w\norm{x_1-x_2}\left(w\norm{x_1-x_2}+2M\right).
 \end{aligned}
\end{equation}
Similarly,
\begin{equation} \label{eqn:opt-rc-mu-lem4}
\centering
 \begin{aligned}
	 \mu\norm{Wx_2 - h_2}^2 +p_2-& \mu\norm{Wx_1- h_1}^2 -p_1\\
	 \leq& \mu\norm{W(x_1 -x_2)}^2
	 +2\mu[W(x_2-x_1)]^T[Wx_1-h_1] \\
	 \leq& \mu\norm{W(x_1-x_2)}|\left(\norm{W(x_1-x_2)} 
	  +2\norm{Wx_1-h_1}\right)\\
	 \leq& \mu w\norm{x_1-x_2}\left(w\norm{x_1-x_2}+2M\right).
 \end{aligned}
\end{equation}
Let $\norm{x^u-x^l}=D, L=\mu w(wD+2M)$, we have  
\begin{equation} \label{eqn:opt-rc-mu-lem5}
\centering
 \begin{aligned}
	 |r_{\mu}(x_1)-r_{\mu}(x_2)|
	 =&\left|\mu \norm{Wx_2 - h_2}^2 +p_2- \mu\norm{Wx_1- h_1}^2 -p_1\right|\\
	 \leq& \mu w\norm{x_1-x_2}\left(w\norm{x_1-x_2}+2M\right)\\
	 \leq& \mu w\norm{x_1-x_2}\left(wD+2M\right)\\
	 \leq& L\norm{x_1-x_2}.
 \end{aligned}
\end{equation}
\end{proof}
\noindent Next we show that an upper general subgradient of $r_{\mu}(\cdot)$ at $\bar{x}$ can be expressed as
\begin{equation}\label{eqn:opt-rc-mu-al-sub}
\begin{aligned}
	g_{\mu}(\bar{x})= 2\mu W^T(W\bar{x}-h(\bar{y})) \in \partial^{+} r_{\mu}(\bar{x}).
\end{aligned}
\end{equation}
\begin{proposition}\label{prop:upper-sub}
	The vector $g_{\mu}(\bar{x})$ in~\eqref{eqn:opt-rc-mu-al-sub} is an upper general subgradient of 
	$r_{\mu}(\cdot)$ in~\eqref{eqn:opt-rc-mu-al}. In addition,
	the upper-$C^2$ inequality in~\eqref{eqn:uppc2-def} is satisfied with $g_{\mu}(\bar{x})$, \textit{i.e.}, 
	for $x,\bar{x}$ in the domain, there exists $C>0$ such that $r_{\mu}(x)-r_{\mu}(\bar{x})- g_{\mu}^T(\bar{x})(x-\bar{x})\leq C\norm{x-\bar{x}}^2$.
\end{proposition}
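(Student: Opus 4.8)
The plan is to exploit that $r_\mu(\cdot)$ is the pointwise minimum over the $x$-independent compact set $Y$ of functions that are \emph{quadratic} in $x$, and that this minimum is attained. Fix $\bar x$ in the domain and let $\bar y\in S(\bar x)$; such a $\bar y$ exists by Weierstrass, since $Y$ is compact (a consequence of the regularization, cf.\ Lemma~\ref{lemma:continuity}) and the objective is continuous in $y$. Write $\phi(x,y)=\mu\norm{Wx-h(y)}^2+p(y)$, so that $r_\mu(x)=\min_{y\in Y}\phi(x,y)\le\phi(x,\bar y)$ for every $x$, with equality at $x=\bar x$ because $\bar y\in S(\bar x)$.

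The key computation is $\nabla_x\phi(\bar x,\bar y)=2\mu W^T(W\bar x-h(\bar y))=g_\mu(\bar x)$, together with the observation that $\phi(\cdot,\bar y)$ is quadratic in $x$ with \emph{constant} Hessian $2\mu W^TW$, so its second-order Taylor expansion about $\bar x$ is exact:
\[
\phi(x,\bar y)=\phi(\bar x,\bar y)+g_\mu(\bar x)^T(x-\bar x)+\mu\norm{W(x-\bar x)}^2 .
\]
Subtracting $r_\mu(\bar x)=\phi(\bar x,\bar y)$ and using $r_\mu(x)\le\phi(x,\bar y)$ gives
\[
r_\mu(x)-r_\mu(\bar x)-g_\mu(\bar x)^T(x-\bar x)\ \le\ \mu\norm{W(x-\bar x)}^2\ \le\ \mu w^2\norm{x-\bar x}^2 ,
\]
which is exactly the upper-$C^2$ inequality~\eqref{eqn:uppc2-def} for $g_\mu(\bar x)$ with the (uniform) constant $C=\mu w^2$.

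It remains to identify $g_\mu(\bar x)$ as an upper general subgradient. Dividing the last display by $\norm{x-\bar x}$ and letting $x\to\bar x$ with $x\neq\bar x$ yields $\limsup_{x\to\bar x}\big(r_\mu(x)-r_\mu(\bar x)-g_\mu(\bar x)^T(x-\bar x)\big)/\norm{x-\bar x}\le 0$, so $g_\mu(\bar x)\in\hat\partial^+ r_\mu(\bar x)$ by definition~\eqref{def:upp-subgradient}. Since every regular upper subgradient is a general upper subgradient (take the constant sequence $x^{\nu}\equiv\bar x$ in the definition $\partial^+ r_\mu(\bar x)=-\partial(-r_\mu)(\bar x)$), we conclude $g_\mu(\bar x)\in\partial^+ r_\mu(\bar x)$.

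There is no serious obstacle here; the only points requiring care are that $S(\bar x)\neq\emptyset$ and that the Taylor expansion is genuinely exact because the Hessian $2\mu W^TW$ does not depend on the base point — this is precisely what lets a single global constant $C=\mu w^2$ serve for all $x,\bar x$, and it incidentally shows the stronger fact that $g_\mu(\bar x)$ lies in the regular upper subdifferential $\hat\partial^+ r_\mu(\bar x)$, not merely the general one.
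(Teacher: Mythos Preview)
Your proof is correct and follows essentially the same approach as the paper: both exploit that $r_\mu(x)\le \phi(x,\bar y)$ with equality at $\bar x$, where $\phi(\cdot,\bar y)$ is quadratic in $x$ with constant Hessian $2\mu W^TW$, yielding the identical constant $C=\mu w^2$. The paper carries out the algebra by explicitly expanding $r_\mu(x)-r_\mu(\bar x)-g_\mu(\bar x)^T(x-\bar x)$ and then invoking optimality of $y\in S(x)$, whereas you phrase the same step more cleanly via the exact Taylor expansion of $\phi(\cdot,\bar y)$; the arguments are equivalent.
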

\begin{proof}
    Let $p=p(y),\bar{p}=p(\bar{y}),h=h(y),\bar{h}=h(\bar{y})$,
	where $y\in S(x)$ and $\bar{y}\in S(\bar{x})$. The left-hand side of the inequality in~\eqref{eqn:uppc2-def} can be written as
\begin{equation} \label{eqn:opt-rc-mu-simp-p1}
\centering
 \begin{aligned}
	 r_{\mu}(x)-&r_{\mu}(\bar{x})- g_{\mu}^T(\bar{x})(x-\bar{x}) \\
	 =& \mu (x^T W^T W x -2h^TWx +h^Th)+p
	   - \mu (\bar{x}^T W^T W \bar{x} -2\bar{h}^TW\bar{x} +\bar{h}^T\bar{h})-\bar{p}\\
	  &  - 2\mu  (\bar{x}^TW^TWx- \bar{x}^TW^TW\bar{x}-\bar{h}^TWx+\bar{h}^TW\bar{x}  )\\
	 =& \mu( \norm{W(x-\bar{x})}^2 -2h^TWx +h^Th-\bar{h}^T\bar{h}
		    +2\bar{h}^TWx )+p-\bar{p}\\
	 =& \mu( \norm{W(x-\bar{x})}^2 +\norm{h - Wx}^2 -\norm{Wx}^2  -\norm{\bar{h} - Wx}^2
	  +\norm{Wx}^2)+p-\bar{p}\\
	 =& \mu\norm{W(x-\bar{x})}^2 +\mu\norm{h - Wx}^2+p  -\mu\norm{\bar{h} - Wx}^2-\bar{p}.\\
 \end{aligned}
\end{equation}
	Further, since $y\in S(x)$, we have  
\begin{equation} \label{eqn:opt-rc-mu-simp-p2}
\centering
 \begin{aligned}
	 r_{\mu}(x)-r_{\mu}(\bar{x})- g_{\mu}^T(\bar{x})(x-\bar{x})
	 \leq& \mu w^2 \norm{x-\bar{x}}^2 +\mu\norm{h - Wx}^2+p  -\mu\norm{\bar{h} - Wx}^2-\bar{p}\\
	 \leq& \mu w^2 \norm{x-\bar{x}}^2. \\
 \end{aligned}
\end{equation}
	Taking the limit so that $x\to\bar{x}$,  
\begin{equation} \label{eqn:opt-rc-mu-simp-p3}
\centering
 \begin{aligned}
	\limsup_{ \substack{x\to \bar{x}\\x\neq \bar{x} }}  \frac{ r_{\mu}(x)-r_{\mu}(\bar{x})- g_{\mu}^T(\bar{x})(x-\bar{x})}{\norm{x-\bar{x}}} \leq 0 \\
 \end{aligned}
\end{equation}
	By definition~\eqref{def:upp-subgradient}, $g_{\mu}(\bar{x})$ is a upper regular subgradient, hence a upper general subgradient. 
Let $C=\mu w^2$, the upper-$C^2$ inequality is satisfied.
\end{proof}

\noindent The quadratic penalty smoothing of the second-stage problems allows the following important property to be achieved.
\begin{proposition}\label{lem:upperC2}
	The second-stage optimal solution function $r_{\mu}(\cdot)$ is upper-$C^2$ and thus satisfies the upper-$C^2$ inequality in~\eqref{eqn:uppc2-def} on its domain.
\end{proposition}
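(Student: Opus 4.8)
The plan is to obtain upper-$C^2$ of $r_\mu$ directly from the structural min-representation \eqref{eqn:uppc1-def-1}, exploiting the fact emphasized right after \eqref{eqn:opt-rc-mu-al}: the quadratic penalty reformulation pushes the entire dependence on the first-stage variable $x$ into the smooth objective and leaves the feasible set $\Phi(x)\equiv\Phi\subseteq Y$ independent of $x$ and compact. Concretely, I would write
\[
  r_\mu(x) = \min_{y\in\Phi} q(y,x), \qquad q(y,x) := \mu\norm{Wx-h(y)}^2 + p(y),
\]
a representation valid for every $x$ in an open neighborhood of the bounded domain $X$ — it actually makes sense for all $x\in\Rbb^n$ precisely because $\Phi$ does not vary with $x$ — and then verify that $q$ meets the hypotheses of \eqref{eqn:uppc1-def-1} with the compact index set $T=\Phi$.

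That verification is routine. Since $h(\cdot)$ and $p(\cdot)$ are smooth, $q(\cdot,\cdot)$ is $C^\infty$, and in particular $q$ together with its first- and second-order partial derivatives in $x$ — namely $\nabla_x q(y,x)=2\mu W^T(Wx-h(y))$ and the constant Hessian $\nabla_x^2 q(y,x)=2\mu W^TW$ — is jointly continuous on $\Phi\times X$. Hence \eqref{eqn:uppc1-def-1} applies and $r_\mu$ is upper-$C^2$ on (the interior of) $X$. The second assertion, that $r_\mu$ satisfies the upper-$C^2$ inequality \eqref{eqn:uppc2-def} on its domain, then follows from the derivation preceding \eqref{eqn:uppc2-def}, using that $r_\mu$ is finite, continuous and Lipschitz on the compact domain by Lemmas \ref{lemma:continuity} and \ref{lem:Lip}; indeed Proposition \ref{prop:upper-sub} has already produced an admissible pair — the constant $C=\mu w^2$ and subgradient $g_\mu(\bar x)$ — so this part requires no new argument.

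The points I expect to need care (rather than the calculations) are: (i) justifying that $\Phi$ is genuinely $x$-independent and compact, which is exactly where Assumption \ref{assp:rec-linear}, the introduction of the bounded slack variables in \eqref{eqn:opt-rc-cons-req}, and the compactness assumption on $Y$ enter; and (ii) the open-domain requirement in \eqref{eqn:uppc1-def-1}, dealt with by passing to an open neighborhood of $X$, on which the regularized minimization is unchanged. An essentially equivalent alternative would be to read off upper-$C^2$ from Proposition \ref{prop:upper-sub}: the global inequality $r_\mu(x)-r_\mu(\bar x)-g_\mu(\bar x)^T(x-\bar x)\le C\norm{x-\bar x}^2$ shows that $-r_\mu+C\norm{\cdot}^2$ is a pointwise supremum of affine functions, hence convex on the (convex) domain, which is precisely the equivalent characterization of $-r_\mu$ being lower-$C^2$. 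Finally, as already noted before Proposition \ref{prop:upper-sub}, linearity of $W$ plays no essential role — any smooth coupling penalized quadratically yields the same conclusion — and the plan is written so this is transparent.
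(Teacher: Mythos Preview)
Your proposal is correct and follows essentially the same approach as the paper: apply the definition \eqref{eqn:uppc1-def-1} directly, using that after the quadratic penalty the feasible set $\Phi$ is compact and $x$-independent and the objective $q(y,x)$ is jointly smooth in $(y,x)$. The paper also mentions the alternative viewpoint via \eqref{eqn:uppc1-def-lip} (closely related to your Proposition~\ref{prop:upper-sub} route) and the observation that linearity of $W$ is inessential, so your optional remarks mirror the paper as well.
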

\begin{proof}
	There are multiple ways to show this. The most straightforward one is to apply directly the definition~\eqref{eqn:uppc1-def-1}. From Lemma~\ref{lem:Lip}, $r_{\mu}(\cdot)$ is Lipschitz continuous. 
	 From the definition of $r_{\mu}(\cdot)$ in~\eqref{eqn:opt-rc-mu-al},
	 the feasible set of the optimization variables $y$ is compact in $\Rbb^m$ and independent of $x$.
	 The coupling is now only in the objective, with the non-coupled part $p(\cdot)$  smooth in $y$.
	 Further, the coupling in objective is quadratic in $x$, rendering it twice continuously differentiable in a neighborhood of both $x$ 
	 and $y$. By definition leading to~\eqref{eqn:uppc1-def-1}, $r_{\mu}(\cdot)$ is upper-$C^2$.

	Intuitively, as pointed out in 10.57 of~\cite{rockafellar1998}, squared distance function on a nonempty closed set is upper-$C^2$. The objective, while having an additional smooth function $p(\cdot)$, is only coupled in the squared distance function.
		 From the viewpoint of~\eqref{eqn:uppc1-def-lip}, given a $x \in \Rbb^n$, an open, convex and bounded neighborhood of $x$ can be found where 
	 the objective that defines $r_{\mu}(\cdot)$ in~\eqref{eqn:opt-rc-mu-al} fits the form in~\eqref{eqn:uppc1-def-lip}. It is pointed out that such a neighborhood could contain infeasible points for the first-stage problem, while not affecting the property of the function $r_{\mu}(\cdot)$ itself.
	 Therefore, $r_{\mu}(\cdot)$ is upper-$C^2$. The proposition then follows.
\end{proof}

\begin{proposition}\label{lem:regular-subgradient}
	If the solution set $S(x)$ is a singleton at $\bar{x}$ such that $S(\bar{x}) = \{\bar{y}\}$, 
	then $r_{\mu}(\cdot)$ is differentiable at $\bar{x}$ and $g_{\mu}(\bar{x})= 2\mu W^T(W\bar{x}-h(\bar{y}))$ is the gradient  $\nabla r_{\mu}(\bar{x})$. 
\end{proposition}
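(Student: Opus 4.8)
The plan is to verify Fréchet differentiability directly from the definition, by proving that the difference quotient $q(x):= \bigl(r_\mu(x)-r_\mu(\bar x)-\langle g_\mu(\bar x),x-\bar x\rangle\bigr)/\norm{x-\bar x}$ tends to $0$ as $x\to\bar x$. Half of this is already in hand: Proposition~\ref{prop:upper-sub} shows $g_\mu(\bar x)\in\hat{\partial}^{+} r_\mu(\bar x)$, so $\limsup_{x\to\bar x}q(x)\le 0$; indeed the stronger quadratic bound $r_\mu(x)-r_\mu(\bar x)-\langle g_\mu(\bar x),x-\bar x\rangle\le C\norm{x-\bar x}^2$ holds. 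What remains is the matching lower estimate $\liminf_{x\to\bar x}q(x)\ge 0$, i.e. $g_\mu(\bar x)\in\hat{\partial} r_\mu(\bar x)$, and this is exactly where the singleton hypothesis $S(\bar x)=\{\bar y\}$ enters (without it $r_\mu$ need not even be lower regular, so $\hat{\partial} r_\mu(\bar x)$ could be empty).

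For this lower estimate I would fix $x$ near $\bar x$, choose any minimizer $y_x\in S(x)$ of~\eqref{eqn:opt-rc-mu-al}, and use two facts: $r_\mu(x)=\mu\norm{Wx-h(y_x)}^2+p(y_x)$ by optimality, and $r_\mu(\bar x)\le\mu\norm{W\bar x-h(y_x)}^2+p(y_x)$ because $y_x$ is feasible for the $\bar x$-subproblem (the feasible set $Y$ being independent of $x$). Subtracting, completing the square exactly as in the proof of Proposition~\ref{prop:upper-sub}, and using $\langle g_\mu(\bar x),x-\bar x\rangle=2\mu\langle W\bar x-h(\bar y),W(x-\bar x)\rangle$, one obtains
\[
 r_\mu(x)-r_\mu(\bar x)-\langle g_\mu(\bar x),x-\bar x\rangle\ \ge\ \mu\norm{W(x-\bar x)}^2+2\mu\bigl\langle h(\bar y)-h(y_x),\,W(x-\bar x)\bigr\rangle .
\]
The first term is nonnegative, and by Cauchy--Schwarz the cross term is bounded below by $-2\mu w\,\norm{h(\bar y)-h(y_x)}\,\norm{x-\bar x}$.

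The crux is then to show $\norm{h(\bar y)-h(y_x)}\to 0$ as $x\to\bar x$; since $h$ is continuous on the compact set $Y$, it suffices to prove $y_x\to\bar y$. This I would get from Lemma~\ref{lemma:continuity}: the multifunction $S(\cdot)$ is upper semicontinuous at $\bar x$ and $S(\bar x)=\{\bar y\}$, so for every $\epsilon>0$ there is a neighbourhood of $\bar x$ on which $S(x)\subset B_\epsilon(\bar y)$, which forces every selection $y_x$ into $B_\epsilon(\bar y)$. Hence the cross term is $o(\norm{x-\bar x})$, and dividing the displayed inequality by $\norm{x-\bar x}$ gives $\liminf_{x\to\bar x}q(x)\ge 0$. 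Together with $\limsup_{x\to\bar x}q(x)\le 0$ from Proposition~\ref{prop:upper-sub}, this yields $q(x)\to 0$, i.e. $r_\mu$ is differentiable at $\bar x$ with $\nabla r_\mu(\bar x)=g_\mu(\bar x)=2\mu W^T(W\bar x-h(\bar y))$.

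I expect the only genuinely delicate point to be the passage $y_x\to\bar y$: one must combine upper semicontinuity of $S(\cdot)$ with the singleton value $S(\bar x)=\{\bar y\}$ and compactness of $Y$ to conclude that \emph{every} selection converges, not merely a subsequence; everything else is the same elementary ``complete the square'' algebra already performed for Proposition~\ref{prop:upper-sub}. An alternative route that bypasses the $\liminf$ estimate is to apply Danskin's theorem to $r_\mu(x)=\min_{y\in Y}\{\mu\norm{Wx-h(y)}^2+p(y)\}$, whose integrand is $C^2$ and whose feasible set $Y$ is compact: the directional derivative at $\bar x$ equals $\min_{y\in S(\bar x)}\langle\nabla_x(\,\cdot\,),d\rangle=\langle g_\mu(\bar x),d\rangle$, which is linear in $d$, so $r_\mu$ is Gateaux differentiable at $\bar x$, hence Fréchet differentiable there by the local Lipschitz property from Lemma~\ref{lem:Lip}.
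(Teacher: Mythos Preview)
Your proposal is correct and follows essentially the same route as the paper: derive the lower estimate $r_\mu(x)-r_\mu(\bar x)-\langle g_\mu(\bar x),x-\bar x\rangle\ge \mu\norm{W(x-\bar x)}^2+2\mu\langle h(\bar y)-h(y_x),W(x-\bar x)\rangle$ via optimality of $\bar y$ at $\bar x$, then combine with the $\limsup$ bound from Proposition~\ref{prop:upper-sub}. You are in fact more explicit than the paper on the one subtle point, namely justifying $h(y_x)\to h(\bar y)$ via the upper semicontinuity of $S(\cdot)$ from Lemma~\ref{lemma:continuity} together with $S(\bar x)=\{\bar y\}$; the paper simply asserts this convergence.
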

\begin{proof}
	Let us take $x,\bar{x}\in X$ and use shorthands $\bar{p}=p(\bar{y}),p=p(y),\bar{h}=h(\bar{y}),h=h(y)$.
	We can write 
\begin{equation} \label{eqn:opt-rc-mu-al-subgradient}
\centering
 \begin{aligned}
	 r_{\mu}&(x)-r_{\mu}(\bar{x})- \bar{g}_{\mu}^T(x-\bar{x}) \\
	 =&\Big(\mu \norm{Wx-h}^2+p \Big)-\Big(\mu \norm{W\bar{x}-\bar{h}}^2+\bar{p} \Big) - 2\mu (W\bar{x}-\bar{h})^TW(x-\bar{x})\\
	 =& \mu( \norm{W(x-\bar{x})}^2 -2h^TWx +h^Th  -\bar{h}^T\bar{h}
		    +2\bar{h}^TWx )+p-\bar{p}\\
	 =& \mu( \norm{W (x-\bar{x})}^2+2h^TW\bar{x}-2\bar{h}^TW\bar{x}-2h^TWx +2\bar{h}^TWx)\\
	  &       +\mu\norm{h - W\bar{x}}^2+p-\mu\norm{\bar{h} - W\bar{x}}^2-\bar{p}\\
	 =& \mu(\norm{W (x-\bar{x})}^2-2\bar{h}^TW(\bar{x}-x) +2h^TW(\bar{x}-x) )\\
	  &       +\mu\norm{h - W\bar{x}}^2+p-\mu\norm{\bar{h} - W\bar{x}}^2-\bar{p}\\
	 \geq& \mu( \norm{W (x-\bar{x})}^2 -2\bar{h}^TW(\bar{x}-x) +2h^TW(\bar{x}-x))\\
	 =& \mu( \norm{W (x-\bar{x})}^2 -2[h-\bar{h}]^TW(x-\bar{x}) )\\
 \end{aligned}
\end{equation}
Since $\bar{h}=h(\bar{y})$ is unique at $\bar{x}$, $h\to\bar{h}$ as $x\to\bar{x}$ and
\begin{equation} \label{eqn:subgradient-lem2.6}
\centering
 \begin{aligned}
	 \liminf_{\substack{x\to \bar{x}\\ x \neq \bar{x} }}\frac{r_{\mu}(x)-r_{\mu}(\bar{x})-g_{\mu}^T(x-\bar{x})}{\norm{x-\bar{x}}}
	 &\geq\lim_{\substack{x\to \bar{x}\\x\neq \bar{x} }}\frac{\mu( -2w \norm{h-\bar{h}}\norm{x-\bar{x}})}{\norm{x-\bar{x}}}\\
	 &= \lim_{\substack{x\to x\\\bar{x}\neq \bar{x} }}-2\mu w\norm{h-\bar{h}} = 0.
 \end{aligned}
\end{equation}
	In addition, from the proof of Proposition~\ref{prop:upper-sub}, we know
\begin{equation} \label{eqn:gradient-up}
\centering
 \begin{aligned}
	 \limsup_{\substack{x\to \bar{x}\\ x \neq \bar{x} }}\frac{r_{\mu}(x)-r_{\mu}(\bar{x})-g_{\mu}^T(x-\bar{x})}{\norm{x-\bar{x}}}
	 \leq 0 
 \end{aligned}
\end{equation}
	Therefore, 
\begin{equation} \label{eqn:gradient-equal}
\centering
 \begin{aligned}
	 \lim_{\substack{x\to \bar{x}\\ x \neq \bar{x} }}\frac{r_{\mu}(x)-r_{\mu}(\bar{x})-g_{\mu}^T(x-\bar{x})}{\norm{x-\bar{x}}}
	 = 0, 
 \end{aligned}
\end{equation}
and the proposition follows.
\end{proof}

On the other hand, if the optimal solution $\bar{h}$ is not unique, there could exist multiple upper regular subgradients
and $r_{\mu}(\cdot)$ might not be differentiable. Indeed, uniqueness of the solution $y\in S(x)$ is not guaranteed. 
It is possible to put more restrictions on $h(\cdot)$ so that $r_{\mu}(\cdot)$ becomes continuously differentiable.
\begin{proposition}\label{lem:lowerc2}
	The optimal solution function $r_{\mu}(\cdot)$ is lower-$C^2$ on a neighborhood $O$ of $\bar{x}$, 
	if $h(y),y\in S(x)$, is Lipschitz continuous on $O$, \textit{i.e.}, $\forall x_1,x_2\in O$,
	there exists $L_h>0$ such that $\norm{h(y_1)-h(y_2)}<L_h\norm{x_1-x_2}$ on $O$. Moreover, $r_{\mu}(\cdot)$ is 
	continuously differentiable at $\bar{x}$.
\end{proposition}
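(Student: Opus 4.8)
The plan is to sandwich the second-order remainder $\Delta(x,x'):=r_{\mu}(x)-r_{\mu}(x')-g_{\mu}(x')^{T}(x-x')$ between a negative and a positive multiple of $\norm{x-x'}^{2}$: for the upper bound I would reuse the computation from the proof of Proposition~\ref{prop:upper-sub}, and for the lower bound the chain~\eqref{eqn:opt-rc-mu-al-subgradient} from the proof of Proposition~\ref{lem:regular-subgradient}, with the new Lipschitz hypothesis on $h$ controlling the cross term. Once such a two-sided estimate holds on a convex neighborhood, $r_{\mu}+\frac{\rho}{2}\norm{\cdot}^{2}$ is convex there for $\rho$ large, which is exactly the characterization of lower-$C^{2}$ recalled in Section~\ref{sec:pri}; continuous differentiability drops out along the way.

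First I would pin down the hypothesis. Applying $\norm{h(y_{1})-h(y_{2})}<L_{h}\norm{x_{1}-x_{2}}$ with $x_{1}=x_{2}$ shows that $h$ takes a single value on $S(x)$ for every $x\in O$, so $\bar h(x):=h\big(S(x)\big)$ is well defined and $L_{h}$-Lipschitz on $O$, and hence $g_{\mu}(x)=2\mu W^{T}(Wx-\bar h(x))$ is a well-defined $2\mu w(w+L_{h})$-Lipschitz map on $O$. Next I would re-run the argument in the proof of Proposition~\ref{lem:regular-subgradient}, noting that it uses only uniqueness and continuity of $\bar h$ at the base point, not uniqueness of the minimizer: since $\bar h(x)\to\bar h(x')$ as $x\to x'$, the lower estimate~\eqref{eqn:subgradient-lem2.6} and the upper estimate~\eqref{eqn:gradient-up} squeeze the difference quotient to $0$ at every $x'\in O$, so $r_{\mu}$ is differentiable on $O$ with $\nabla r_{\mu}(x')=g_{\mu}(x')$. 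Since $g_{\mu}$ is continuous, $r_{\mu}\in C^{1}(O)$; in particular $r_{\mu}$ is continuously differentiable at $\bar x$.

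For the lower-$C^{2}$ property, shrink $O$ to an open ball about $\bar x$, so that $O$ is convex and the estimates above still hold. Fix $x,x'\in O$ and pick $y\in S(x)$, $y'\in S(x')$; applying~\eqref{eqn:opt-rc-mu-al-subgradient} with $x'$ in the role of $\bar x$, and using that $y$ is feasible for the $x'$-problem (the feasible set is $x$-independent) while $y'$ is optimal there, one obtains
\begin{equation*}
\Delta(x,x')\ \ge\ \mu\big(\norm{W(x-x')}^{2}-2\,[h(y)-h(y')]^{T}W(x-x')\big)\ \ge\ -2\mu w L_{h}\norm{x-x'}^{2},
\end{equation*}
the last step by Cauchy--Schwarz, $\norm{W}=w$ and the Lipschitz bound. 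Set $\rho_{0}:=4\mu w L_{h}$. For any $\rho>\rho_{0}$, write $\phi:=r_{\mu}+\frac{\rho}{2}\norm{\cdot}^{2}$, a $C^{1}$ function on the convex set $O$ with $\nabla\phi(x')=\nabla r_{\mu}(x')+\rho x'$; completing the square gives
\begin{equation*}
\phi(x)-\phi(x')-\langle\nabla\phi(x'),x-x'\rangle \ =\ \Delta(x,x')+\frac{\rho}{2}\norm{x-x'}^{2}\ \ge\ \big(\tfrac{\rho}{2}-2\mu w L_{h}\big)\norm{x-x'}^{2}\ \ge\ 0,
\end{equation*}
so $\phi$ is convex on $O$. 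By the equivalent description of lower-$C^{2}$ functions recalled after~\eqref{eqn:lowc1-def}, $r_{\mu}$ is lower-$C^{2}$ on $O$, which proves the proposition. Equivalently, the argument shows $r_{\mu}\in C^{1,1}(O)$, and a $C^{1,1}$ function is automatically lower-$C^{2}$ (and upper-$C^{2}$) via the descent lemma.

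The step I expect to be the main obstacle is the middle paragraph: correctly reading the hypothesis and upgrading the \emph{pointwise} differentiability of Proposition~\ref{lem:regular-subgradient} to a statement \emph{uniform} over all base points $x'\in O$, so that the sandwich can be anchored at an arbitrary $x'$ and not merely at $\bar x$; one must also check that the neighborhood can be taken convex without degrading the Lipschitz estimates, which is immediate for balls. The remaining pieces---Cauchy--Schwarz, completing the square, and invoking the $r+\frac{\rho}{2}\norm{\cdot}^{2}$ convexity characterization---are routine.
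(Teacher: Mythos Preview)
The paper states this proposition without proof, so there is no authors' argument to compare against. Your proposal is correct and self-contained: reading the Lipschitz hypothesis with $x_1=x_2$ forces $h$ to be single-valued on each $S(x)$, which upgrades the pointwise differentiability of Proposition~\ref{lem:regular-subgradient} to $r_\mu\in C^1(O)$ with Lipschitz gradient $g_\mu$; the lower quadratic bound on $\Delta(x,x')$ then follows from~\eqref{eqn:opt-rc-mu-al-subgradient} and Cauchy--Schwarz, and the convexity-of-$r_\mu+\tfrac{\rho}{2}\norm{\cdot}^2$ characterization finishes the job. The remark that $C^{1,1}$ already implies lower-$C^2$ via the descent lemma is an efficient shortcut and could replace the explicit convexity computation. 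One cosmetic point: the strict inequality in the hypothesis is awkward at $x_1=x_2$, $y_1=y_2$ (it would read $0<0$), so you are right to treat it as the non-strict bound the authors evidently intend.
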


To summarize, while in many nonsmooth nonconvex optimization methods, lower-$C^2$ or prox-regularity 
are assumed, for some important applications such as the decomposed formulation 
of SCACOPF, lower regularity of the objective is not available.  
The upper-type properties however appears natural for two-stage problems,
and has motivated us to make assumptions  
differently than the conventional ones and design algorithms accordingly. 

There are multiple convergence definitions in nonsmooth nonconvex analysis, \textit{e.g.},
stationary point, Karush–Kuhn–Tucker (KKT) point, (Fritz-John) critical point, etc. 
In this paper, 
the focus is on first-order optimality condition with Clarke subgradient. 
Without losing generality, problem~\eqref{eqn:opt-ms} can be recast for simplicity as
\begin{equation} \label{eqn:opt-ms-simp}
 \centering
  \begin{aligned}
   &\underset{\substack{x}}{\text{minimize}} 
	  & & r(x)\\
   &\text{subject to}
	  & & c(x) = 0 \\
	  &&& 0 \leq x \leq x_u,
  \end{aligned}
\end{equation}
where $c(x):\Rbb^n\to \Rbb^m$.
As mentioned earlier in this section, transforming~\eqref{eqn:opt-ms} requires the new variables $x$ in~\eqref{eqn:opt-ms-simp} 
to contain slack variables, which are implicitly bounded
from bound constraints on $x$. 
We opt to keep the upper bound $x_u$ in the bound constraints 
explicit, as in~\eqref{eqn:opt-ms} to emphasize that the feasible set of $x$ is bounded . 
We point out that the bound constraints form a convex set, 
with the nonconvexity left to the equality constraints.

Problem~\eqref{eqn:opt-ms-simp} is assumed to be calm (6.4,~\cite{clarke1983}) at its local minimum.  
Calmness can be viewed as a weak constraint qualification (6.4,~\cite{clarke1983}).
In particular, the widely adopted linear
independence constraint qualification (LICQ) (17.2,~\cite{Nocedal_book}) in smooth optimization ensures
calmness. 
Calmness guarantees the Lagrange 
multiplier for the objective function in Fritz-John critical point equation~\cite{clarke1983} is nonzero(6.4.4,~\cite{clarke1983}). 
Therefore, we can use a KKT point instead of a Fritz-John critical point in the optimality condition~\cite{clarke1983}.  
For problem~\eqref{eqn:opt-ms-simp}, a first-order optimality condition at a local minimum  $\bar{x}$ is that there exists $\bar{\lambda} \in \Rbb^m $ and 
$\bar{\zeta}_l \geq 0$, $\bar{\zeta}_l \in \Rbb^n$, $\bar{\zeta}_u \geq 0$, $\bar{\zeta}_u \in \Rbb^n$ such that 
\begin{equation} \label{eqn:opt-ms-simp-KKT}
   \centering
  \begin{aligned}
	  0 \in  \bar{\partial} r(\bar{x}) + \nabla c (\bar{x}) \bar{\lambda} - \bar{\zeta}_l + \bar{\zeta}_u&, \\
	  \bar{Z}_u(\bar{x}-x_u) = 0, \bar{Z}_l \bar{x} = 0&,\\
	  c_j(\bar{x}) = 0,j=1,\dots,m&,\\
	  \bar{\lambda}_j c_j(\bar{x}) = 0,j=1,\dots,m&,\\
	  \bar{\zeta}_l,\bar{\zeta}_u,x_u-\bar{x},\bar{x}\geq 0&.
  \end{aligned}
\end{equation}
The matrix $\nabla c(\bar{x})$ is of dimension $n\times m$.
The matrices $\bar{Z}_u,\bar{Z}_l$ are diagonal matrices whose diagonal values are $\bar{\zeta}_u$ and $\bar{\zeta}_l$, respectively.
A point that satisfies~\eqref{eqn:opt-ms-simp-KKT} is called a KKT point of problem~\eqref{eqn:opt-ms-simp}.
For upper regular functions, it is possible to establish a stronger form 
of optimality condition, especially the first condition in~\eqref{eqn:opt-ms-simp-KKT} as 
explained in~\cite{mordukhovich2004upp}.

\section{\normalsize Simplified bundle algorithm}\label{sec:alg}
Given the nonsmooth nonconvex nature of problem~\eqref{eqn:opt-ms-simp}, we consider the bundle methods which have 
proven to be one of the most successful methods in solving such 
problems~\cite{lemarechal2001}. 
Bundle methods utilize information generated through previous iterative steps to form 
an approximation of the objective. Typically such an approximation model is a supportive one that produces
smaller function value than the real function. Meanwhile, many such algorithms 
rely on the quadratic coefficient in the approximation to avoid line search~\cite{hare2010,noll2013}. 
Another feature is the existence of a robust rejection mechanism to ensure the approximation is reasonable, 
similar to trust-region methods~\cite{Nocedal_book}. A solution to an iterative subproblem generates 
a trial step that is either accepted or rejected. A trial point is called a serious 
point if it is accepted.
Convergence analysis for bundle methods typically require the objective to have properties such as lower-$C^2$ and lower-$C^1$. 
For large-scale problems such as SCACOPF problems, a clear drawback is that
the complex update rule for the bundle and the large number of bundle points needed in the approximation 
could increase computing time considerably. 

The proposed algorithm simplifies the bundle method while retain many of its features.
Motivated by the properties exhibited from two-stage stochastic optimization 
problems discussed in Section~\ref{sec:prob}, we make the assumption that
the objective $r(\cdot)$ is upper-$C^2$, formalized below. 
\begin{assumption}\label{assp:upperC2}
	The Lipschitz continuous objective function $r(\cdot)$ in problem~\eqref{eqn:opt-ms-simp} is upper-$C^2$.
\end{assumption}
\noindent In particular, the inequality~\eqref{eqn:uppc2-def} is satisfied and since $x$ is bounded, there exists a $\rho$ 
for the entire domain.
In general, upper regularity, which is closely related to concavity, is less explored for optimization problems. 
We point out that 
~\cite{noll2009,dao2015} have studied bundle methods and to our knowledge were the first to prove convergence for upper-$C^1$
objective and constraints. 
However, in that case the parameters of the approximation model are not guaranteed to be finite, besides the aforementioned challenges in applying bundle method.
To take full advantage of the smooth constraints $c(\cdot)$, we assume uniform boundedness on their Hessian, a common assumption in literature~\cite{curtis2012}.
\begin{assumption}\label{assp:boundedHc}
	The constraints $c(\cdot)$ are twice differentiable. There exists a constant $H_u^c$ such that the Hessian of constraints $c(\cdot)$ satisfy $\frac{1}{2}x^T \nabla^2c_j(x) x \leq H_u^c\norm{x}^2$ for any $x\in\Rbb^n$ and $1\leq j\leq m$. 
\end{assumption}

\subsection{Algorithm description}\label{sec:alg-1}
The simplified bundle algorithm is an iterative method with approximated objective at each iteration. 
It bears similarity to SQP methods in the treatment of constraints and can be viewed as its extension.
Compared to conventional bundle methods, the convex quadratic approximation $\phi_k(\cdot)$ to the objective $r(\cdot)$ in~\eqref{eqn:opt-ms-simp} 
is dependent only on the current serious point instead of a bundle of points. 
More specifically, at iteration $k$ and its serious step $x_k$,  the local approximation model $\phi_k(\cdot)$ is
\begin{equation} \label{eqn:opt-rc-appx-x}
 \centering
  \begin{aligned}
	  \phi_k(x) = r(x_k) + g_k^T(x-x_k) + \frac{1}{2}\alpha_k \norm{x-x_k}^2,
  \end{aligned}
\end{equation}
where $g_k\in\bar{\partial} r(x_k)$, and $\alpha_k>0$ is a scalar quadratic coefficient.
Equivalently, denoting $d=x-x_k$, $\phi_k(x)$ can be reformulated as $\Phi_k(d)$ such that
\begin{equation} \label{eqn:opt-rc-appx}
 \centering
  \begin{aligned}
	  \Phi_k(d) &= r_k + g_k^T d +\frac{1}{2}\alpha_k\left\lVert d\right\rVert ^2,\\ 
  \end{aligned}
\end{equation}
where $r_k=r(x_k)$. The function 
value and subgradient at $x_k$ are exact, \textit{i.e.}, $\Phi_k(0)=r_k,\nabla\Phi_k(0)=g_k$.
Furthermore, the smooth constraints in~\eqref{eqn:opt-ms-simp} are linearized. The subproblem to be solved 
at iteration $k$ is 
\begin{equation} \label{eqn:opt-ms-simp-bundle}
 \centering
  \begin{aligned}
   &\underset{\substack{d}}{\text{minimize}} 
	  & & \Phi_k(d)\\
   &\text{subject to}
	  & & c(x_k) + \nabla c(x_k)^T d= 0, \\
	  &&& d_l^k \leq d \leq d_u^k,
  \end{aligned}
\end{equation}
where $d_l^k=-x_k,d_u^k=x_u-x_k$.
As in SQP algorithms, it is possible that the linearized constraints 
cause the problem~\eqref{eqn:opt-ms-simp-bundle} to be infeasible. 
There are multiple ways to address this issue, one of which will be presented in Section~\ref{sec:lincons}.
In this section, we operate under the assumption that~\eqref{eqn:opt-ms-simp-bundle} can be solved and 
its solution is denoted as $d_k$.
To measure progress in both the objective and the constraints, 
the $l_1$ merit function is adopted:
\begin{equation} \label{eqn:opt-ms-simp-bundle-merit}
 \centering
  \begin{aligned}
	  \phi_{1\theta_k}(x) = r(x) + \theta_k \norm{c(x)}_1, \\
  \end{aligned}
\end{equation}
where $\norm{\cdot}_1$ is the 1-norm and $\theta_k>0$ is a penalty parameter. 
A line search step on the constraints is needed in order to ensure progress
in the merit function~\eqref{eqn:opt-ms-simp-bundle-merit}.
The predicted change on the objective is defined as 
\begin{equation} \label{def:pd}
 \centering
  \begin{aligned}
	  \delta_k =& \Phi_k(0) - \Phi_k(d_k) 
	   = -g_k^T d_k -\frac{1}{2}\alpha_k\norm{d_k}^2.
  \end{aligned}
\end{equation}
To measure whether the approximation model $\Phi_k(\cdot)$ of the objective formed at $x_k$ is still valid at the trial step $x_k+d_k$,
we define ratio $\rho_k$ as
\begin{equation} \label{eqn:decrease-ratio-1}
 \centering
  \begin{aligned}
	  \rho_k = \begin{cases}
		  r(x_k)-r(x_k+d_k)-\eta_l^+ \delta_k, \ &\delta_k \geq 0,\\
		  r(x_k)-r(x_k+d_k)-\eta_l^- \delta_k, \  &\delta_k <0,
	  \end{cases}
  \end{aligned}
\end{equation}
where $0<\eta_l^+\leq1$ and $\eta_l^-\geq 1$ are two parameters of the algorithm. 
If $\rho_k>0$, the model is valid and the algorithm proceeds to line search.
Otherwise, the trial step $x_k+d_k$ is 
rejected and the parameter $\alpha_k$ is updated to find a different trial step. This process 
draws inspiration from trust-region methods. 

The change in the model objective $\delta_k$ is not necessarily positive. Therefore, the 
corresponding threshold $\eta_l^+$ and $\eta_l^-$ differ based on the sign of $\delta_k$. 
In both cases, the actual change in the objective $r(x_k)-r(x_{k}+d_k)$ is allowed to
be slightly worse than the predicted change. 
This means that if $\delta_k$ is non-negative, the actual decrease can be smaller than the predicted decrease $\delta_k$, 
though a fraction $\eta_l^+$ of  $\delta_k$ is required.
If $\delta_k$ is negative, 
the actual increase in objective value can be slightly larger than the predicted increase value $-\delta_k$, 
the degree to which is governed by $\eta_l^- \geq 1$.

Let the line search parameter be $\beta_k\in (0,1]$. Then, the serious step taken is given as 
$x_{k+1} = x_k + \beta_k d_k$.
Let $\delta_k^{\beta} = \Phi_k(0) - \Phi_k(\beta_k d_{k})$, we have 
\begin{equation} \label{def:pd2}
 \centering
  \begin{aligned}
	  \delta_k^{\beta} =& \Phi_k(0) - \Phi_k(\beta_k d_k)
	   = -\beta_k g_k^T d_k -\frac{1}{2}\beta_k^2 \alpha_k\norm{d_k}^2.
  \end{aligned}
\end{equation}
Similar to $\rho_k$, the ratio between predicted and actual change in objective at $x_{k+1}$ is denoted as $\rho_k^{\beta}$, whose 
definition is 
\begin{equation} \label{eqn:decrease-ratio-beta}
 \centering
  \begin{aligned}
	  \rho_k^{\beta} = \begin{cases}
		  r(x_k)-r(x_{k+1})-\eta_{\gamma}^+ \delta_k^{\beta}, \ & \delta_k^{\beta} \geq 0,\\
		  r(x_k)-r(x_{k+1})-\eta_{\gamma}^- \delta_k^{\beta} , \  & \delta_k^{\beta} <0.
	  \end{cases}
  \end{aligned}
\end{equation}
 The parameter $\eta_{\gamma}^{+}$ and $\eta_{\gamma}^-$ can have different values than $\eta_l^+$ and $\eta_l^-$ to increase  
 flexibility of the algorithm.
The first-order optimality conditions of the subproblem~\eqref{eqn:opt-ms-simp-bundle} are
\begin{equation} \label{eqn:simp-bundle-KKT}
  \centering
   \begin{aligned}
	   g_k + \alpha_k d_k - \nabla c(x_k) \lambda^{k+1} -\zeta_l^{k+1}+\zeta_u^{k+1} &=0,\\
	   Z_u^{k+1}(d_k-d_u^k) = 0, Z_l^{k+1} (d_k-d_l^k) &= 0,\\
	   \Lambda^{k+1}\left[c(x_k)+\nabla c(x_k)^T d_k\right] &= 0,\\
	   \zeta_u^{k+1},\zeta_l^{k+1},d_k -d_l^k,d_u^k-d_k&\geq 0,\\
	   c(x_k)+\nabla c(x_k)^Td_k&= 0,
   \end{aligned}
 \end{equation}
	where $\lambda^{k+1}\in \Rbb^m$ is the Lagrange multiplier for $c(\cdot)$, and $\zeta_u^{k+1},\zeta_l^{k+1}\in \Rbb^n$ are the Lagrange multipliers for the bound constraints. 
	The matrices $\Lambda^{k+1},Z_u^{k+1},Z_l^{k+1}$ are 
	diagonal matrices whose diagonal values are $\lambda^{k+1},\zeta_u^{k+1}$ and $\zeta_l^{k+1}$, respectively.
      An equivalent form of the complementarity conditions 
      of bound constraints based on $x_u$ instead of $d_l^k,d_u^k$ are
\begin{equation} \label{eqn:simp-bundle-KKT-bound}
  \centering
   \begin{aligned}
	   Z_u^{k+1}(x_k+d_k-x_u) = 0, Z_l^{k+1} (x_k+d_k) &= 0,\\
	   \zeta_u^{k+1},\zeta_l^{k+1}, x_k+d_k ,x_u-x_k-d_k&\geq 0.\\
   \end{aligned}
 \end{equation}

\noindent The line search conditions are given as follows
\begin{equation} \label{eqn:line-search-cond}
 \centering
  \begin{aligned}
	  \theta_k \norm{c(x_k)}_1+\beta_k (\lambda^{k+1})^T c(x_k) &\geq \theta_k \norm{c(x_{k+1})}_1-\eta_{\beta}\frac{1}{2}\alpha_k\beta_k\norm{d_k}^2, \\
	  \theta_k \norm{c(x_k)}_1+\eta_{\gamma}^+ \beta_k (\lambda^{k+1})^T c(x_k) &\geq \theta_k \norm{c(x_{k+1})}_1-\eta_{\beta}\frac{1}{2}\alpha_k\beta_k\norm{d_k}^2, \\
	  \theta_k \norm{c(x_k)}_1+\eta_{\gamma}^- \beta_k (\lambda^{k+1})^T c(x_k) &\geq \theta_k \norm{c(x_{k+1})}_1-\eta_{\beta}\frac{1}{2}\alpha_k\beta_k\norm{d_k}^2. \\
  \end{aligned}
\end{equation}
The differences between the conditions are the parameters $\eta_{\gamma}^+$ and $\eta_{\gamma}^-$ in the second and third inequalities, which stem from the unknown sign of $\delta_k$ and $\delta_k^{\beta}$. 
For simplicity in implementation and analysis, we use the following alternative 
condition for line search that encompasses all three
\begin{equation} \label{eqn:line-search-cond-alt}
 \centering
  \begin{aligned}
	  \theta_k \norm{c(x_k)}_1 - \eta_{\gamma}^-  \beta_k\left| (\lambda^{k+1})^T c(x_k)\right| &\geq \theta_k \norm{c(x_{k+1})}_1-\eta_{\beta}\frac{1}{2}\alpha_k\beta_k\norm{d_k}^2. \\
  \end{aligned}
\end{equation}
We will show that condition~\eqref{eqn:line-search-cond-alt} implies conditions in~\eqref{eqn:line-search-cond} in Lemma~\ref{lem:line-search-alt}.
The simplified bundle method is presented in Algorithm~\ref{alg:simp-bundle}, where $\norm{\cdot}_{\infty}$ 
is the infinity norms. The items involving consistency restoration such as $\pi_{k-1}$ are explained in Section~\ref{sec:lincons}.
\begin{algorithm}
 \DontPrintSemicolon
 \SetAlgoNoLine 
   \caption{Simplified bundle method}\label{alg:simp-bundle}
	Initialize $x_0$, $\alpha_0$, stopping error tolerance $\epsilon$, and $k=1$.
	Choose scalars $0<\eta_l^+\leq 1$, $0<\eta_{\beta}<\eta_{\gamma}^+\leq 1$, $\eta_l^-\geq 1$,$\eta_{\gamma}^-\geq 1$, $\eta_{\alpha}>1$ and $\gamma>0$. 
	Evaluate the function value $r(x_0)$ and subgradient $g(x_0)$.\;
	\For{$k=0,1,2,...$}{
	  Form the quadratic function $\Phi_k$ in~\eqref{eqn:opt-rc-appx} and solve 
	  subproblem~\eqref{eqn:opt-ms-simp-bundle} to obtain $d_{k}$ and Lagrange multiplier $\lambda^{k+1}$. (If inconsistent constraints are encountered, enter consistency restoration and go back to step 2 with $k=k+1$.)\;
	  \If{$\norm{d_k}\leq \epsilon$}{
            Stop the iteration and exit the algorithm. \;
	  }
	  Evaluate function value $r(x_k+d_k)$. 
	  Compute $\delta_k$ in~\eqref{def:pd} and $\rho_k$ in~\eqref{eqn:decrease-ratio-1}.\;
	  Set the merit function parameter $\theta_k$ so that $\theta_k = \max{\{\theta_{k-1},\eta_{\gamma}^- \norm{\lambda^{k+1}}_{\infty}+\gamma\}}$. If feasibility restoration is called for iteration $k-1$, let $\theta_k = \max{\{\frac{1}{\pi_{k-1}},\eta_{\gamma}^- \norm{\lambda^{k+1}}_{\infty}+\gamma\}}$.\;
	  \If{$\rho_k > 0$}{
		  Find the line search parameter $\beta_k>0$ using backtracking, starting at $\beta_k=1$ and halving 
		  if too large, such that the conditions in~\eqref{eqn:line-search-cond-alt} are satisfied.
		  Evaluate $r(x_{k+1})$ and compute $\rho_k^{\beta}$ in~\eqref{eqn:decrease-ratio-beta}.\;
		  \If{$\rho_k^{\beta} < 0$}{
		    Break and go to line 14.\;
		  }
		  Take the step $x_{k+1} = x_k+\beta_k d_k$.\;
	  }
	  \Else{
	    Reject the trial step.\;
	    Call the chosen $\alpha_k$ update rules to obtain $\alpha_{k+1}=\eta_{\alpha}\alpha_k$.\;
	  }
	}
\end{algorithm}



\subsection{Convergence analysis}\label{sec:alg-convg}
If the algorithm terminates in a finite number of steps, stopping test in step 4, which can be modified if needed, is satisfied with the error tolerance $\epsilon$ and the solution is considered found. 
Let $\epsilon=0$,  based on step 4, $\norm{d_k}= 0$.
As $d_k$ solves~\eqref{eqn:opt-ms-simp-bundle}, optimality conditions in~\eqref{eqn:simp-bundle-KKT}
are satisfied, of which the first equation reduces to 
\begin{equation}\label{alg:finite-steps}
 \begin{aligned}
	 g_k- \nabla c(x_k) \lambda^{k+1}-\zeta_l^{k+1}+\zeta_u^{k+1}=0.
 \end{aligned}
\end{equation}
Given $g_k\in\bar{\partial} r(x_k)$, we have $0\in \bar{\partial} r(x_k)-\nabla c(x_k) \lambda^{k+1}-\zeta_l^{k+1}+\zeta_u^{k+1}$. 
In addition, by $c(x_k)+\nabla c(x_k)^T d_k = 0$ from~\eqref{eqn:simp-bundle-KKT}, we have $c(x_k)=0$. So $x_k$ is feasible in terms of the equality constraints. 
Together with the bound constraints that are enforced in the subproblem~\eqref{eqn:opt-ms-simp-bundle}, the rest of the equations in~\eqref{eqn:opt-ms-simp-KKT} are also satisfied.
Therefore,  $x_k$ satisfies~\eqref{eqn:opt-ms-simp-KKT} and is a KKT point for~\eqref{eqn:opt-ms-simp} as the algorithm exits.
In what follows, the convergence analysis is carried out for the case with an infinite number of steps, \textit{i.e.}, $\norm{d_k}>0$. 
We start by showing that the parameter $\alpha_k$ in Algorithm~\ref{alg:simp-bundle} eventually stabilizes, \textit{i.e.}, becomes constant.


\begin{lemma}\label{lem:sufficientdecrease}
	Given the assumption~\eqref{assp:upperC2}, Algorithm~\ref{alg:simp-bundle} produces a finite 
	number of rejected steps. 
	As a consequence, the quadratic coefficient $\alpha_k$  is bounded above and remains constant for $k$ large enough. 
\end{lemma}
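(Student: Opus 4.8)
The plan is to show that a rejected step is impossible once $\alpha_k$ exceeds the uniform upper-$C^2$ modulus $\rho$ of $r(\cdot)$ on the bounded feasible set. Since every rejection replaces $\alpha_k$ by $\eta_\alpha\alpha_k$ with $\eta_\alpha>1$, and $\alpha_k$ is changed nowhere else in Algorithm~\ref{alg:simp-bundle}, this caps the total number of rejections and pins $\alpha_k$ at a constant value for all large $k$, with $\alpha_k\le\max\{\alpha_0,\eta_\alpha\rho\}$.

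First I would record the decisive estimate. By Assumption~\ref{assp:upperC2} and the uniform version of the upper-$C^2$ inequality~\eqref{eqn:uppc2-def} recorded just after that display, there is a single $\rho>0$ with $r(x)-r(\bar x)-g^{T}(x-\bar x)\le\frac{\rho}{2}\norm{x-\bar x}^2$ for all $\bar x$ in the bounded feasible set and all $g\in\partial^{+}r(\bar x)$; by Lemma~\ref{lem:upp-sub-clarke} the Clarke subgradient $g_k\in\bar\partial r(x_k)$ used to build $\Phi_k$ also lies in $\partial^{+}r(x_k)$, so the inequality applies at $\bar x=x_k$, $x=x_k+d_k$. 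Substituting $-g_k^{T}d_k=\delta_k+\frac12\alpha_k\norm{d_k}^2$ from~\eqref{def:pd}, this rearranges to
\[
  r(x_k)-r(x_k+d_k)\ \ge\ \delta_k+\tfrac12(\alpha_k-\rho)\norm{d_k}^2 .
\]

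Next I would feed this into the definition~\eqref{eqn:decrease-ratio-1} of $\rho_k$, splitting on the sign of $\delta_k$: if $\delta_k\ge0$ then $\rho_k\ge(1-\eta_l^{+})\delta_k+\tfrac12(\alpha_k-\rho)\norm{d_k}^2\ge\tfrac12(\alpha_k-\rho)\norm{d_k}^2$ since $0<\eta_l^{+}\le1$, and if $\delta_k<0$ then $\rho_k\ge(1-\eta_l^{-})\delta_k+\tfrac12(\alpha_k-\rho)\norm{d_k}^2\ge\tfrac12(\alpha_k-\rho)\norm{d_k}^2$ since $\eta_l^{-}\ge1$. In the infinite-iteration case $\norm{d_k}>0$ (otherwise the stopping test in step~4 exits the algorithm), so $\alpha_k>\rho$ forces $\rho_k>0$ and the trial step is not rejected. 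The identical computation with $\beta_k d_k$ in place of $d_k$ and $\eta_\gamma^{\pm}$ in place of $\eta_l^{\pm}$ (using $0<\eta_\beta<\eta_\gamma^{+}\le1$ and $\eta_\gamma^{-}\ge1$, with $\beta_k>0$) shows $\alpha_k>\rho$ also forces $\rho_k^{\beta}>0$, ruling out the secondary rejection triggered by $\rho_k^{\beta}<0$. Hence a rejection can occur only while $\alpha_k\le\rho$; starting from $\alpha_0$ and multiplying by $\eta_\alpha$ at each rejection, this happens at most $\lceil\log_{\eta_\alpha}(\rho/\alpha_0)\rceil$ times (none if $\alpha_0\ge\rho$), after which $\alpha_k$ is never updated and therefore constant, with $\alpha_k\le\max\{\alpha_0,\eta_\alpha\rho\}$ throughout the run.

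The step I expect to be the crux is the uniformity of $\rho$: the proof needs one modulus valid simultaneously at every linearization point the algorithm visits, which is precisely what compactness of the feasible set buys via the uniform form of~\eqref{eqn:uppc2-def} (attributed there to 10.54 of~\cite{rockafellar1998} and to~\cite{hare2010}). Two bookkeeping points also merit an explicit line: that $g_k$ is legitimate in~\eqref{eqn:uppc2-def} at all, which is Lemma~\ref{lem:upp-sub-clarke}; and that $\alpha_k$ is monotone and is altered only inside the rejection branch of Algorithm~\ref{alg:simp-bundle}, which is what turns ``finitely many rejections'' into ``eventually constant.''
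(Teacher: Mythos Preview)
Your proposal is correct and follows essentially the same approach as the paper's own proof: both invoke the uniform upper-$C^2$ inequality to show that once $\alpha_k$ exceeds the modulus (your $\rho$, the paper's $2C$), the acceptance tests $\rho_k>0$ and $\rho_k^\beta>0$ are automatically satisfied, and then both use the multiplicative update $\alpha_{k+1}=\eta_\alpha\alpha_k$ at rejections to cap the total count. Your write-up is slightly cleaner---you extract the explicit lower bound $\tfrac12(\alpha_k-\rho)\norm{d_k}^2$ and give a concrete rejection count, whereas the paper splits into two cases ($\alpha_k>2C$ eventually vs.\ $\alpha_k\le 2C$ forever)---but the substance is identical.
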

\begin{proof}
	From the upper-$C^2$ property~\eqref{eqn:uppc2-def}, we have  
\begin{equation} \label{eqn:opt-ms-appx-rec-p1}
 \centering
  \begin{aligned}
	  r(x_k+d)- r_k - g_{k}^Td \leq C \norm{d}^2\\
  \end{aligned}
\end{equation}
for a fixed constant $C>0$.
In the first part of the proof we show that if at some iteration $k$, $\alpha_k$ satisfies
\begin{equation} \label{eqn:opt-ms-appx-rec-p2}
 \centering
  \begin{aligned}
	  \alpha_k > 2 C, \\
  \end{aligned}
\end{equation}
then no rejected steps can occur in Algorithm~\ref{alg:simp-bundle} after iteration $k$. This means steps $8$ and $10$ of the algorithm $\rho_t> 0$ and $\rho_t^\beta> 0$ will hold for all iterations $t\geq k$.
The inequalities~\eqref{eqn:opt-ms-appx-rec-p1} and \eqref{eqn:opt-ms-appx-rec-p2} imply
\begin{equation} \label{eqn:opt-ms-appx-rec-obj}
 \centering
  \begin{aligned}
	  r_k - r(x_k+d_k) \geq& -g_{k}^T d_k -C \norm{d_k}^2\\
                        >& -g_{k}^T d_k -\frac{1}{2}\alpha_k \norm{d_k}^2\\
				 =& \Phi_k(0) -\Phi_k(d_k). \\
  \end{aligned}
\end{equation}
As in the definition~\eqref{eqn:decrease-ratio-1} of $\rho_k$, we distinguish between two cases based on the sign of $\delta_k$.
If $\delta_k = \Phi_k(0) -\Phi_k(d_k) \geq 0$, then since $0<\eta_l^+\leq 1$,~\eqref{eqn:opt-ms-appx-rec-obj} gives
\begin{equation} \label{eqn:opt-ms-appx-rec-obj2}
 \centering
  \begin{aligned}
	  r_k - r(x_k+d_k) >& \Phi_k(0) -\Phi_k(d_k) \\
			\geq & \eta_l^+\left[ \Phi_k(0) -\Phi_k(d_k)\right]. 
  \end{aligned}
\end{equation}
If $\delta_k = \Phi_k(0) -\Phi_k(d_k) < 0$, given $\eta_l^-\geq 1$, we can also write based on~\eqref{eqn:opt-ms-appx-rec-obj} that
\begin{equation} \label{eqn:opt-ms-appx-rec-obj3}
 \centering
  \begin{aligned}
	  r_k - r(x_k+d_k) >& \Phi_k(0) -\Phi_k(d_k), \\
			\geq & \eta_l^-\left[ \Phi_k(0) -\Phi_k(d_k)\right]. 
  \end{aligned}
\end{equation}
As a consequence, by definition~\eqref{eqn:decrease-ratio-1}, we conclude
 $\rho_k > 0$. Similar inequalities hold for  $x_{k+1} = x_k+\beta_k d_k$ since one can write based on~\eqref{eqn:opt-ms-appx-rec-p1} that
\begin{equation} \label{eqn:opt-ms-appx-rec-merit-beta}
 \centering
  \begin{aligned}
	  r(x_k) - r(x_{k+1}) \geq& -\beta_k g_{k}^Td_k - C \beta_k^2 \norm{d_k}^2 
                                 > -\beta_k g_{k}^Td_k -\frac{1}{2}\alpha_k\beta_k^2 \norm{d_k}^2 \\
				 =& \Phi_k(0) -\Phi_k(\beta_k d_k).\\
  \end{aligned}
\end{equation}
Same steps that lead to~\eqref{eqn:opt-ms-appx-rec-obj2} and~\eqref{eqn:opt-ms-appx-rec-obj3} for $\eta_{\gamma}^+,\eta_{\gamma}^-$ imply $\rho_k^{\beta}> 0$. Therefore, for $t\geq k$, $\rho_t>0$ and $\rho_t^{\beta}>0$ and thus $\alpha_t =\alpha_k$. 
Equivalently, no rejected steps occur once~\eqref{eqn:opt-ms-appx-rec-p2} holds.
Since $\alpha_k$ is increased monotonically with a ratio $\eta_{\alpha}>1$ whenever a rejected step is encountered,
only a finite number of rejected steps are needed to reach $\alpha_k > 2 C$, which are followed by serious steps.

For the second part of the proof, suppose now $\alpha_k \leq 2C$ for all $k$. 
Then, only no or a small number of rejected steps can be taken by the algorithm.
The monotonically increasing $\alpha_k$ ensures that there exists $k$ such that $\alpha_t=\alpha_k \leq 2C$  for all $t\geq k$.  This completes the proof. 
\end{proof}

\begin{remark}\label{rmrk:realalpha}
For simplicity, we choose to increase $\alpha_k$ monotonically in the algorithm. In practice, we encourage that 
$\alpha_k$ be reduced if $\rho_k>0$ and $\eta_l^+>\eta_u^+$  where $\eta_u^+$ is an upper threshold for the parameter $\eta_l^+$.
In other words, if the actual decrease in objective is bigger than a certain ratio of the predicted decrease, 
	then $\Phi_k(\cdot)$ is a good approximation and we reduce the quadratic coefficient to encourage larger step size.
From the convergence analysis point of view, the upper-$C^2$ constant $C$ is not uniform in the entire domain. 
A decrease in $\alpha_k$ allows the algorithm to adjust better to  
the local upper-$C^2$ constant that could be relatively small compared to $C$, which could result in improved convergence in practice.
\end{remark}

\begin{lemma}\label{lem:line-search-merit}
	Given Assumption~\ref{assp:boundedHc}, the line search process of Algorithm~\ref{alg:simp-bundle} is well-defined, in that a $\beta_k\in (0,1]$ that satisfies the line search conditions in~\eqref{eqn:line-search-cond-alt} exists and can be found in a finite number of steps through backtracking step 9 as long as the 
	Lagrange multipliers $\lambda^{k+1}$ from~\eqref{eqn:opt-ms-simp-bundle} remain finite.
\end{lemma}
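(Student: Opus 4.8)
The plan is to establish that the single inequality~\eqref{eqn:line-search-cond-alt} is satisfied for every sufficiently small $\beta_k \in (0,1]$, so that the halving backtracking in step~9 of Algorithm~\ref{alg:simp-bundle} terminates after finitely many reductions. First I would exploit the fact that $d_k$ is feasible for subproblem~\eqref{eqn:opt-ms-simp-bundle}, which forces $\nabla c(x_k)^T d_k = -c(x_k)$. Combining this with a second-order Taylor expansion of each component $c_j$ along the segment from $x_k$ to $x_{k+1} = x_k + \beta_k d_k$ — a segment that stays inside the bounded box $0 \le x \le x_u$, since $x_k$ and $x_k + d_k$ both lie there and the box is convex — and invoking Assumption~\ref{assp:boundedHc} to control the second-order remainder, I would obtain a bound of the form
\begin{equation*}
\norm{c(x_{k+1})}_1 \;\le\; (1-\beta_k)\,\norm{c(x_k)}_1 \;+\; C_H\,\beta_k^2\,\norm{d_k}^2 ,
\end{equation*}
valid for all $\beta_k \in [0,1]$, where $C_H \ge 0$ depends only on $m$ and $H_u^c$ (the affine case $H_u^c = 0$ being trivial).

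Next I would substitute this estimate into the right-hand side of~\eqref{eqn:line-search-cond-alt}. After cancelling a common positive factor $\beta_k$, the desired inequality reduces to
\begin{equation*}
\theta_k\norm{c(x_k)}_1 \;-\; \eta_\gamma^-\,\bigl|(\lambda^{k+1})^T c(x_k)\bigr| \;+\; \tfrac12\eta_\beta\alpha_k\norm{d_k}^2 \;-\; \theta_k\,C_H\,\beta_k\norm{d_k}^2 \;\ge\; 0 .
\end{equation*}
The key observation here is that the merit parameter rule in step~7 guarantees $\theta_k \ge \eta_\gamma^-\norm{\lambda^{k+1}}_\infty + \gamma$, so by H\"older's inequality $|(\lambda^{k+1})^T c(x_k)| \le \norm{\lambda^{k+1}}_\infty\norm{c(x_k)}_1$ and hence $\theta_k\norm{c(x_k)}_1 - \eta_\gamma^-|(\lambda^{k+1})^T c(x_k)| \ge \gamma\norm{c(x_k)}_1 \ge 0$. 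Discarding this nonnegative term, it only remains to secure $\tfrac12\eta_\beta\alpha_k - \theta_k C_H\beta_k \ge 0$, i.e.\ $\beta_k \le \eta_\beta\alpha_k/(2\theta_k C_H)$ (any $\beta_k$ working when $C_H = 0$).

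Finally I would observe that $\bar\beta := \min\{\,1,\ \eta_\beta\alpha_k/(2\theta_k C_H)\,\}$ is strictly positive precisely because $\alpha_k > 0$, $\eta_\beta > 0$, $C_H < \infty$, and $\theta_k < \infty$ — and it is in the last of these that the hypothesis ``$\lambda^{k+1}$ remains finite'' enters, through the step~7 definition of $\theta_k$. Since the backtracking starts at $\beta_k = 1$ and halves, it produces, after at most $\lceil \log_2(1/\bar\beta)\rceil$ reductions, some $\beta_k \in (0,1]$ with $\beta_k \le \bar\beta$, which therefore satisfies~\eqref{eqn:line-search-cond-alt}; this shows the line search is well-defined. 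I expect the only mildly delicate point to be the Taylor-remainder bound on $\norm{c(x_{k+1})}_1$: one must apply Assumption~\ref{assp:boundedHc} to the quadratic remainder $d_k^T\nabla^2 c_j(\xi)d_k$ along the segment rather than to $x$ in the direction $x$, which is handled by restricting to the compact box $[0,x_u]$ and using continuity of $\nabla^2 c_j$ to get a uniform operator-norm bound; the rest is elementary, and it is essentially the standard Armijo-type sufficient-decrease argument adapted to the $l_1$ merit function~\eqref{eqn:opt-ms-simp-bundle-merit}.
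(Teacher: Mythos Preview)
Your proposal is correct and follows essentially the same approach as the paper's proof: Taylor-expand each $c_j$ along the step, use $\nabla c(x_k)^T d_k = -c(x_k)$ to obtain $\norm{c(x_{k+1})}_1 \le (1-\beta_k)\norm{c(x_k)}_1 + mH_u^c\beta_k^2\norm{d_k}^2$, invoke the rule $\theta_k \ge \eta_\gamma^-\norm{\lambda^{k+1}}_\infty + \gamma$ together with H\"older's inequality, and deduce that~\eqref{eqn:line-search-cond-alt} holds once $\beta_k \le \eta_\beta\alpha_k/(2mH_u^c\theta_k)$. Your remark about the literal form of Assumption~\ref{assp:boundedHc} is a fair caveat; the paper applies it in the intended sense of a uniform quadratic-form bound on $\nabla^2 c_j$ without further comment.
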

\begin{proof}
If $\lambda^{k+1}$ remains finite throughout the algorithm, then a finite $\theta_k$ is guaranteed as well based on how it is chosen in Algorithm~\ref{alg:simp-bundle} step 7 as it stops increasing for $k$ large enough.
	Since $c(\cdot)$ is smooth, we apply Taylor expansion to the $j$th equality constraint, $j=1,...,m$, at $x_k$ for $x_{k+1}=x_k+\beta_k d_k$ to obtain 
   \begin{equation} \label{eqn:simp-bundle-c-ls-pf-1}
   \centering
    \begin{aligned}
	    c_j(x_{k+1})  = & c_j(x_k)+ \beta_k \nabla c_j(x_k)^T d_{k} + \frac{1}{2}\beta_k^2 d_k^T H_{k\beta}^{j} d_k,
   \end{aligned}
   \end{equation}
	where $H_{k\beta}^j$ is the Hessian $\nabla^2 c_j(\cdot)$ at a point on the line segment determined by $x_k$ and $x_{k+1}$. 
Given $d_k$ as the solution to~\eqref{eqn:opt-ms-simp-bundle}, we have that $c_j(x_k) +  \nabla c_j(x_k)^T d_k = 0$ and as a consequence, we can write based on~\eqref{eqn:simp-bundle-c-ls-pf-1} that
   \begin{equation*} 
   \centering
    \begin{aligned}
	    c_j(x_{k+1}) = (1-\beta_k) c_j(x_k)+ \frac{1}{2}\beta_k^2 d_k^T H_{k\beta}^{j} d_k.\\
    \end{aligned}
   \end{equation*}
By Assumption~\ref{assp:boundedHc}, $\left|c_j(x_{k+1})\right|  \leq \left|(1-\beta_k) c_j(x_k)\right| + \beta_k^2 H^{c}_u \norm{d_k}^2$, which in turn implies that
   \begin{equation} \label{eqn:simp-bundle-c-ls-pf-2.5}
   \centering
    \begin{aligned}
	    \norm{c(x_{k+1})}_1  \leq 
	     (1-\beta_k) \norm{c(x_k)}_1 + m\beta_k^2 H^{c}_u \norm{d_k}^2.
    \end{aligned}
   \end{equation}
Applying simple norm inequalities, we have  
\begin{equation} \label{eqn:simp-bundle-c-ls-pf-norm}
   \centering
    \begin{aligned}
	    \beta_k \left|(\lambda^{k+1})^T c(x_k)\right| \leq \beta_k\norm{\lambda^{k+1}}_{\infty} \norm{c(x_k)}_1.
    \end{aligned}
   \end{equation}
	Since step 7 of the algorithm chooses $\theta_k \geq \eta_{\gamma}^- \norm{\lambda^{k+1}}_{\infty}+\gamma$, where $\eta_{\gamma}^-$ and $\gamma$ are positive
	constants, we can write based on~\eqref{eqn:simp-bundle-c-ls-pf-2.5} and~\eqref{eqn:simp-bundle-c-ls-pf-norm} that
   \begin{equation} \label{eqn:simp-bundle-c-ls-pf-3} 
   \centering
    \begin{aligned}
	    &\theta_k\norm{c(x_k)}_1 -\eta_{\gamma}^- \beta_k \left|(\lambda^{k+1})^T c(x_k)\right|-\theta_k \norm{c(x_{k+1})}_1  \\ 
	    &\hspace{1.0cm}\geq(\theta_k - \eta_{\gamma}^- \beta_k\norm{\lambda^{k+1}}_{\infty}) \norm{c(x_k)}_1 -\theta_k(1-\beta_k) \norm{c(x_k)}_1 - \theta_km\beta_k^2 H^{c}_u \norm{d_k}^2\\
		&\hspace{1.0cm}=(\theta_k\beta_k- \eta_{\gamma}^- \beta_k\norm{\lambda^{k+1}}_{\infty}) \norm{c(x_k)}_1- \theta_km H^c_u \beta_k^2 \norm{d_k}^2 \\
		&\hspace{1.0cm}\geq\beta_k\left( \gamma \norm{c(x_k)}_1- \theta_k m H^c_u\beta_k\norm{d_k}^2\right).
    \end{aligned}
   \end{equation}
Therefore, if $\beta_k$ is reduced by the line search step through backtracking in Algorithm~\ref{alg:simp-bundle} to satisfy
   \begin{equation} \label{eqn:simp-bundle-c-ls-pf-4}
   \centering
    \begin{aligned}
	   0< \beta_k \leq \frac{\eta_{\beta} \alpha_k}{2 H^c_u\theta_k m},
    \end{aligned}
   \end{equation}
then
   \begin{equation*} 
   \centering
    \begin{aligned}
	    \theta_k \norm{c(x_k)}_1-\eta_{\gamma}^- \beta_k\left|(\lambda^{k+1})^T c(x_k)\right|-\theta_k \norm{c(x_{k+1})}_1 \geq  - \frac{1}{2}\eta_{\beta}\alpha_k\beta_k\norm{d_k}^2. \\
    \end{aligned}
   \end{equation*}
Using ceiling function $\lceil\cdot \rceil$, which returns the least integer greater than the input, we can write 
   \begin{equation} \label{eqn:simp-bundle-c-ls-pf-5}
   \centering
    \begin{aligned}
	    \beta_k \geq \frac{1}{2} ^{\lceil \log_{\frac{1}{2}} \frac{\eta_{\beta} \alpha_k}{2 H^c_u\theta_k m} \rceil}.
    \end{aligned}
   \end{equation}
We remark that both the denominator and numerator in~\eqref{eqn:simp-bundle-c-ls-pf-4} are positive and independent of the line search. 
Further, by Lemma~\ref{lem:sufficientdecrease} and finite $\lambda^{k+1}$, all terms in~\eqref{eqn:simp-bundle-c-ls-pf-4} remain finite. Therefore, the backtracking stops in finite steps. This completes the proof.
   \end{proof}

\begin{lemma}\label{lem:line-search-alt}
	The $\beta_k\in (0,1]$ that meets the line search condition in~\eqref{eqn:line-search-cond-alt} also satisfies the conditions 
	from~\eqref{eqn:line-search-cond}, or equivalently
\begin{equation} \label{eqn:simp-bundle-etagamma}
   \centering
    \begin{aligned}
	   \beta_k(\lambda^{k+1})^T c(x_k)\geq -\theta_k \norm{c(x_k)}_1+ \theta_k \norm{c(x_{k+1})}_1 - \eta_{\beta}\frac{1}{2}\alpha_k\beta_k\norm{d_k}^2,\\
	  \eta_{\gamma}^+\beta_k(\lambda^{k+1})^T c(x_k) \geq  -\theta_k \norm{c(x_k)}_1+ \theta_k \norm{c(x_{k+1})}_1   - \eta_{\beta}\frac{1}{2}\alpha_k\beta_k\norm{d_k}^2,\\
	 \eta_{\gamma}^-\beta_k(\lambda^{k+1})^T c(x_k) \geq  -\theta_k \norm{c(x_k)}_1+ \theta_k \norm{c(x_{k+1})}_1 - \eta_{\beta}\frac{1}{2}\alpha_k\beta_k\norm{d_k}^2.\\
    \end{aligned}
   \end{equation}
\end{lemma}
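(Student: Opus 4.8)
The plan is to strip condition~\eqref{eqn:line-search-cond-alt} down to a single bound on $\bigl|(\lambda^{k+1})^T c(x_k)\bigr|$ and then read off each of the three inequalities in~\eqref{eqn:simp-bundle-etagamma} by an elementary sign argument that exploits the parameter ranges $\eta_\gamma^-\ge 1$ and $0<\eta_\gamma^+\le 1$. Note first that the conditions in~\eqref{eqn:line-search-cond} are exactly~\eqref{eqn:simp-bundle-etagamma} after moving the term $\theta_k\norm{c(x_k)}_1$ to the right-hand side, so it suffices to establish the three inequalities in~\eqref{eqn:simp-bundle-etagamma}.

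First I would introduce the shorthands $A = \beta_k(\lambda^{k+1})^T c(x_k)$ and $B = \theta_k\norm{c(x_k)}_1 - \theta_k\norm{c(x_{k+1})}_1 + \tfrac12\eta_\beta\alpha_k\beta_k\norm{d_k}^2$, so that~\eqref{eqn:line-search-cond-alt} is precisely $\eta_\gamma^-\lvert A\rvert \le B$. Since the left-hand side is nonnegative and $\eta_\gamma^->0$, this forces $B\ge 0$, and dividing by $\eta_\gamma^-\ge 1$ gives $\lvert A\rvert \le B/\eta_\gamma^- \le B$. Everything downstream is routed through $\lvert A\rvert$, so no case split on the sign of $(\lambda^{k+1})^T c(x_k)$ is needed.

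The three claims are then $A\ge -B$, $\eta_\gamma^+ A\ge -B$, and $\eta_\gamma^- A\ge -B$. For the first, $A\ge -\lvert A\rvert \ge -B$. For the third, $\eta_\gamma^- A \ge -\eta_\gamma^-\lvert A\rvert \ge -B$, the last step being~\eqref{eqn:line-search-cond-alt} itself. For the second, chain $\eta_\gamma^+ A \ge -\eta_\gamma^+\lvert A\rvert \ge -\lvert A\rvert \ge -B$, where the middle inequality uses $0<\eta_\gamma^+\le 1$. Rewriting these as displayed in~\eqref{eqn:line-search-cond}/\eqref{eqn:simp-bundle-etagamma} completes the proof.

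There is no real obstacle here; the only thing to watch is the bookkeeping of signs together with the opposite roles of $\eta_\gamma^-\ge 1$ (used to discard the absolute value on the ``large'' side) and $0<\eta_\gamma^+\le 1$ (used to contract toward it). I would include a one-line remark that the argument is independent of the sign of $(\lambda^{k+1})^T c(x_k)$, which is what makes the unified condition~\eqref{eqn:line-search-cond-alt} as strong as the three-part condition~\eqref{eqn:line-search-cond}.
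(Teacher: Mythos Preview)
Your proof is correct and follows essentially the same approach as the paper: both reduce each of the three target inequalities to the single bound $-\eta_\gamma^-\lvert A\rvert \ge -B$ (equivalently $\eta_\gamma^-\lvert A\rvert\le B$) via the elementary inequality $cA\ge -c\lvert A\rvert$ together with the ordering $0<\eta_\gamma^+\le 1\le \eta_\gamma^-$. Your use of the shorthands $A,B$ is a clean presentation of the same argument.
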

\begin{proof}
   From simple absolute value inequality, we have  
   \begin{equation} \label{eqn:line-search-alt-pf-1}
   \centering
    \begin{aligned}
	    \beta_k(\lambda^{k+1})^T c(x_k) \geq& -\beta_k\left|(\lambda^{k+1})^T c(x_k)\right|,\\ 
	     \eta_{\gamma}^+ \beta_k(\lambda^{k+1})^T c(x_k) \geq& -\eta_{\gamma}^+ \beta_k\left|(\lambda^{k+1})^T c(x_k)\right|,\\
	     \eta_{\gamma}^- \beta_k(\lambda^{k+1})^T c(x_k) \geq& -\eta_{\gamma}^-\beta_k\left|(\lambda^{k+1})^T c(x_k)\right|.\\
    \end{aligned}
   \end{equation}
	Given that $0<\eta_{\gamma}^{+}\leq 1\leq \eta_{\gamma}^-$, 
   \begin{equation} \label{eqn:line-search-alt-pf-2}
   \centering
    \begin{aligned}
            -\eta_{\gamma}^-\beta_k\left|(\lambda^{k+1})^T c(x_k)\right|\leq  -\beta_k\left|(\lambda^{k+1})^T c(x_k)\right| \leq -\eta_{\gamma}^+ \beta_k\left|(\lambda^{k+1})^T c(x_k)\right|.
    \end{aligned}
   \end{equation}
	Therefore, from~\eqref{eqn:line-search-alt-pf-1} and~\eqref{eqn:line-search-alt-pf-2}
   \begin{equation} \label{eqn:line-search-alt-pf-3}
   \centering
    \begin{aligned}
	    \beta_k(\lambda^{k+1})^T c(x_k) \geq& -\eta_{\gamma}^- \beta_k\left|(\lambda^{k+1})^T c(x_k)\right|,\\ 
	     \eta_{\gamma}^+ \beta_k(\lambda^{k+1})^T c(x_k) \geq& -\eta_{\gamma}^- \beta_k\left|(\lambda^{k+1})^T c(x_k)\right|,\\
	     \eta_{\gamma}^- \beta_k(\lambda^{k+1})^T c(x_k) \geq& -\eta_{\gamma}^-\beta_k\left|(\lambda^{k+1})^T c(x_k)\right|.\\
    \end{aligned}
   \end{equation}
   From the line search condition~\eqref{eqn:line-search-cond-alt}, we have
   \begin{equation} \label{eqn:line-search-alt-pf-4}
   \centering
    \begin{aligned}
	-\eta_{\gamma}^-\beta_k\left|(\lambda^{k+1})^T c(x_k)\right| \geq -\theta_k \norm{c(x_k)}_1+ \theta_k \norm{c(x_{k+1})}_1 - \eta_{\beta}\frac{1}{2}\alpha_k\beta_k\norm{d_k}^2.\\
    \end{aligned}
   \end{equation}
   Combined with~\eqref{eqn:line-search-alt-pf-3} the proof is completed.
\end{proof}

\begin{lemma}\label{lem:sufficientdecrease-merit}
	The step $x_{k+1} = x_k+\beta_k d_k$ is a decreasing step for the merit function~\eqref{eqn:opt-ms-simp-bundle-merit} if $\beta_k$ satisfies the line search condition. Further, if the Lagrange multipliers $\lambda^k$ is finite for all $k$, the speed of decrease satisfies $\phi_{1\theta_k}(x_k)-\phi_{1\theta_k}(x_{k+1})>c_{\phi} \norm{d_k}^2$ for some constant $c_{\phi}$. 

\end{lemma}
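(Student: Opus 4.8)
Here is the approach I would take.

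The plan is to lower-bound the merit-function change $\phi_{1\theta_k}(x_k)-\phi_{1\theta_k}(x_{k+1})=\big[r(x_k)-r(x_{k+1})\big]+\theta_k\big[\norm{c(x_k)}_1-\norm{c(x_{k+1})}_1\big]$ by adding the guaranteed objective decrease (coming from the step being accepted) to the guaranteed constraint decrease (coming from the line search conditions), and then to show that the first-order terms cancel against a quadratic term extracted from the subproblem's KKT conditions. First I would record that, since $x_{k+1}=x_k+\beta_k d_k$ is the serious step actually taken by Algorithm~\ref{alg:simp-bundle}, line~11 gives $\rho_k^{\beta}\geq 0$; recalling $\delta_k^{\beta}=-\beta_k g_k^T d_k-\tfrac12\beta_k^2\alpha_k\norm{d_k}^2$ from~\eqref{def:pd2}, definition~\eqref{eqn:decrease-ratio-beta} then yields $r(x_k)-r(x_{k+1})\geq \eta_{\gamma}^+\delta_k^{\beta}$ when $\delta_k^{\beta}\geq 0$ and $r(x_k)-r(x_{k+1})\geq \eta_{\gamma}^-\delta_k^{\beta}$ when $\delta_k^{\beta}<0$.

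The key algebraic ingredient is obtained from the subproblem optimality conditions~\eqref{eqn:simp-bundle-KKT}: taking the inner product of the first equation with $d_k$ and using $\nabla c(x_k)^T d_k=-c(x_k)$ (the last equation of~\eqref{eqn:simp-bundle-KKT}) gives
\[
g_k^T d_k+\alpha_k\norm{d_k}^2+(\lambda^{k+1})^T c(x_k)-(\zeta_l^{k+1})^T d_k+(\zeta_u^{k+1})^T d_k=0.
\]
By the complementarity relations in~\eqref{eqn:simp-bundle-KKT-bound}, $(\zeta_l^{k+1})^T d_k=-(\zeta_l^{k+1})^T x_k\leq 0$ and $(\zeta_u^{k+1})^T d_k=(\zeta_u^{k+1})^T(x_u-x_k)\geq 0$, using that the iterates stay feasible for the box constraints (which propagates inductively since $x_{k+1}$ is a convex combination of $x_k$ and $x_k+d_k$, both in $[0,x_u]$, for $\beta_k\in(0,1]$). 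Hence one gets the key inequality $(\star)$: $g_k^T d_k+(\lambda^{k+1})^T c(x_k)\leq -\alpha_k\norm{d_k}^2$.

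Next I would split on the sign of $\delta_k^{\beta}$ and, in each case, add the objective bound above to the matching version of the line search condition~\eqref{eqn:line-search-cond} — the version with coefficient $\eta_{\gamma}^+$ on $(\lambda^{k+1})^T c(x_k)$ when $\delta_k^{\beta}\geq 0$, and the one with coefficient $\eta_{\gamma}^-$ when $\delta_k^{\beta}<0$; both follow from the implemented condition~\eqref{eqn:line-search-cond-alt} via Lemma~\ref{lem:line-search-alt}. For $\delta_k^{\beta}\geq 0$ this produces
\[
\phi_{1\theta_k}(x_k)-\phi_{1\theta_k}(x_{k+1})\geq -\eta_{\gamma}^+\beta_k\big[g_k^T d_k+(\lambda^{k+1})^T c(x_k)\big]-\tfrac12\eta_{\gamma}^+\beta_k^2\alpha_k\norm{d_k}^2-\tfrac12\eta_{\beta}\alpha_k\beta_k\norm{d_k}^2,
\]
and substituting $(\star)$ and using $\beta_k\leq 1$ leaves $\phi_{1\theta_k}(x_k)-\phi_{1\theta_k}(x_{k+1})\geq \tfrac{\eta_{\gamma}^+-\eta_{\beta}}{2}\alpha_k\beta_k\norm{d_k}^2$; the case $\delta_k^{\beta}<0$ is identical with $\eta_{\gamma}^+$ replaced by $\eta_{\gamma}^-$, giving the bound $\tfrac{\eta_{\gamma}^--\eta_{\beta}}{2}\alpha_k\beta_k\norm{d_k}^2$. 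Since $0<\eta_{\beta}<\eta_{\gamma}^+\leq 1\leq \eta_{\gamma}^-$ and $\alpha_k\geq\alpha_0>0$, $\beta_k>0$, both right-hand sides are strictly positive, which is the first assertion. For the quantitative bound, I would invoke the hypothesis that $\lambda^k$ stays finite: then step~7 of the algorithm keeps $\theta_k$ bounded above by some $\theta_{\max}<\infty$, Lemma~\ref{lem:sufficientdecrease} bounds $\alpha_k$ above (and $\alpha_k\geq\alpha_0$), and the backtracking estimate~\eqref{eqn:simp-bundle-c-ls-pf-5} in Lemma~\ref{lem:line-search-merit} bounds $\beta_k$ below by some $\underline{\beta}>0$; hence $\alpha_k\beta_k\geq\alpha_0\underline{\beta}$ uniformly and $c_{\phi}=\tfrac{\eta_{\gamma}^+-\eta_{\beta}}{4}\alpha_0\underline{\beta}$ works. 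I expect the main obstacle to be $(\star)$: that is where the bound-constraint multipliers, the linearized feasibility $\nabla c(x_k)^T d_k=-c(x_k)$, and the box-feasibility of the iterates must all be handled carefully — once $(\star)$ is in hand, the rest is bookkeeping that the dichotomous choice of $\eta_{\gamma}^{\pm}$ in~\eqref{eqn:line-search-cond} is designed to make work.
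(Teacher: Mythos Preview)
Your proposal is correct and follows essentially the same route as the paper: derive the key inequality $(\star)$ from the subproblem KKT conditions and complementarity, combine the objective decrease from $\rho_k^{\beta}\geq 0$ with the matching line search inequality from~\eqref{eqn:line-search-cond} (via Lemma~\ref{lem:line-search-alt}), and conclude with the uniform lower bound on $\alpha_k\beta_k$ coming from Lemma~\ref{lem:sufficientdecrease} and~\eqref{eqn:simp-bundle-c-ls-pf-5}. The only difference is that the paper additionally singles out a third case $\alpha_k>2C$, where it uses the upper-$C^2$ inequality directly (yielding coefficient $1$ instead of $\eta_{\gamma}^{\pm}$ and the slightly larger constant $(1-\eta_{\beta})\tfrac12\alpha_k\beta_k$); your two-case split based solely on the sign of $\delta_k^{\beta}$ is a clean simplification, since $\rho_k^{\beta}\geq 0$ already covers that case.
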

\begin{proof}
	For a serious step $x_{k+1}$ to be taken, step 8 and 10 are satisfied so that $\rho_k>0,\rho_k^{\beta}>0$. 
	We distinguish three cases based on the value of $\alpha_k$ and sign of $\delta_k^{\beta}$. 
	The first case is $\alpha_k>2C$. By upper-$C^2$ property in~\eqref{eqn:opt-ms-appx-rec-p1}, as shown in~\eqref{eqn:opt-ms-appx-rec-merit-beta}, we have  
\begin{equation*} 
 \centering
  \begin{aligned}
	  r(x_k) - r(x_{k+1}) >& -\beta_k g_{k}^Td_k -\frac{1}{2}\alpha_k\beta_k^2 \norm{d_k}^2 \\
				 =& \Phi_k(0) -\Phi_k(\beta_kd_k) = \delta_k^{\beta}.\\
  \end{aligned}
\end{equation*}
	In the second case, 
	$\alpha_k\leq 2C$ and $\delta_k^{\beta}\geq 0$. 
	From the definition of $\rho_k^{\beta}$ in~\eqref{eqn:decrease-ratio-beta},  
\begin{equation} \label{eqn:simp-bundle-merit-pf-2p}
 \centering
  \begin{aligned}
	  r(x_k) - r(x_{k+1}) > 
		  \eta_{\gamma}^+ \left[ -\beta_k g_{k}^Td_k -\frac{1}{2}\alpha_k\beta_k^2 \norm{d_k}^2\right]. \\
  \end{aligned}
\end{equation}
       The third case is when $\alpha_k\leq 2C$ and $\delta_k^{\beta}<0$, and we have 
\begin{equation} \label{eqn:simp-bundle-merit-pf-2m}
 \centering
  \begin{aligned}
	  r(x_k) - r(x_{k+1}) > 
		  \eta_{\gamma}^- \left[ -\beta_k g_{k}^Td_k -\frac{1}{2}\alpha_k\beta_k^2 \norm{d_k}^2\right]. \\
  \end{aligned}
\end{equation}
		Rearranging the first equation in optimality condition~\eqref{eqn:simp-bundle-KKT}, we have  
\begin{equation} \label{eqn:simp-bundle-KKT-beta}
  \centering
   \begin{aligned}
	   g_k + \alpha_k d_k =  \nabla c(x_k) \lambda^{k+1} +\zeta_l^{k+1} -\zeta_u^{k+1}.\\
   \end{aligned}
 \end{equation}
Then, taking the inner product with $-d_k$ and using the last equation from~\eqref{eqn:simp-bundle-KKT} we have 
\begin{equation} \label{eqn:simp-bundle-KKT-2}
  \centering
   \begin{aligned}
	   - g_k^T d_k-\alpha_k\norm{d_k}^2 &=- (\lambda^{k+1})^T \nabla c(x_k)^T d_k-d_k^T \zeta_l^{k+1}+d_k^T \zeta_u^{k+1}\\
			    &= (\lambda^{k+1})^T c(x_k) - (d_k - d_l^k +d_l^k)^T\zeta_l^{k+1} + (d_k-d_u^k+d_u^k)^T \zeta_u^{k+1} \\
			    &= (\lambda^{k+1})^T c(x_k) - (d_l^k)^T \zeta_l^{k+1}  +(d_u^k)^T \zeta_u^{k+1} \\
			    &= (\lambda^{k+1})^T c(x_k) + x_k^T \zeta_l^{k+1} + (x_u-x_k)^T\zeta_u^{k+1} \\
			    &\geq (\lambda^{k+1})^T c(x_k). \\
   \end{aligned}
 \end{equation}
	The third equality of~\eqref{eqn:simp-bundle-KKT-2} comes from the complementarity conditions $Z_{l}^{k+1}(d_k-d_l^k)=0$ and $Z_u^{k+1}(d_k-d_u^k)=0$ in~\eqref{eqn:simp-bundle-KKT}. 
	The inequality can be obtained from bound constraints in~\eqref{eqn:simp-bundle-KKT-bound} 
	where $x_k\geq 0$, $x_u-x_k\geq 0$, $\zeta_l^{k+1}\geq 0$ and $\zeta_u^{k+1}\geq 0$
	for the current and previous iteration. 
	Next, multiplying both sides of~\eqref{eqn:simp-bundle-KKT-2} by $\beta_k$ and then subtracting $\frac{1}{2}\alpha_k\beta_k^2\norm{d_k}^2$ leads to
  \begin{equation} \label{eqn:simp-bundle-KKT-3}
       \centering
       \begin{aligned}
	       -\beta_kg_k^T d_k-\frac{1}{2}\alpha_k\beta_k^2\norm{d_k}^2 &\geq \alpha_k\beta_k\norm{d_k}^2-\frac{1}{2}\alpha_k\beta_k^2\norm{d_k}^2+\beta_k(\lambda^{k+1})^Tc(x_k)\\
		 &\geq\frac{1}{2}\alpha_k\beta_k\norm{d_k}^2+\beta_k(\lambda^{k+1})^Tc(x_k),\\
         \end{aligned}
        \end{equation}
	where the second inequality makes use of $\beta_k\in (0, 1]$.
Notice that the left-hand side of~\eqref{eqn:simp-bundle-KKT-3} is $\delta_k^{\beta}$ and is not guaranteed to be positive. 
Multiplying both sides of~\eqref{eqn:simp-bundle-KKT-3} by $\eta_{\gamma}^+$ and $\eta_{\gamma}^-$ respectively, we obtain
  \begin{equation} \label{eqn:simp-bundle-KKT-4}
       \centering
       \begin{aligned}
	       -\eta_{\gamma}^+ \beta_kg_k^T d_k-\frac{1}{2}\eta_{\gamma}^+\alpha_k\beta_k^2\norm{d_k}^2 
		 &\geq\frac{1}{2}\eta_{\gamma}^+\alpha_k\beta_k\norm{d_k}^2+\eta_{\gamma}^+\beta_k(\lambda^{k+1})^Tc(x_k),\\
	       -\eta_{\gamma}^- \beta_kg_k^T d_k-\frac{1}{2}\eta_{\gamma}^-\alpha_k\beta_k^2\norm{d_k}^2 
		 &\geq\frac{1}{2}\eta_{\gamma}^-\alpha_k\beta_k\norm{d_k}^2+\eta_{\gamma}^-\beta_k(\lambda^{k+1})^Tc(x_k).\\
         \end{aligned}
        \end{equation}
Finally, we can examine the merit function $\phi_{1\theta_k}(\cdot)$. If $\alpha_k>2C$, 
combine the inequality in~\eqref{eqn:opt-ms-appx-rec-merit-beta},~\eqref{eqn:simp-bundle-KKT-3} and the first inequality from Lemma~\ref{lem:line-search-alt}, we have  
\begin{equation} \label{eqn:simp-bundle-merit-pf-3}
 \centering
  \begin{aligned}
	  \phi_{1\theta_k}(x_k) - \phi_{1\theta_k}(x_{k+1})  =& r(x_k) -r(x_{k+1})+\theta_k\norm{c(x_k)}_1-\theta_k\norm{c(x_{k+1})}_1\\
			 >& -\beta_kg_k^T d_k-\frac{1}{2}\alpha_k\beta_k^2\norm{d_k}^2+\theta_k\norm{c(x_k)}_1-\theta_k\norm{c(x_{k+1})}_1 \\
			 \geq&\frac{1}{2}\alpha_k\beta_k\norm{d_k}^2+\beta_k(\lambda^{k+1})^Tc(x_k)+\theta_k\norm{c(x_k)}_1-\theta_k\norm{c(x_{k+1})}_1\\
			 \geq&\frac{1}{2}\alpha_k\beta_k\norm{d_k}^2-\eta_{\beta}\beta_k\frac{1}{2}\alpha_k\norm{d_k}^2\\
			 =&(1-\eta_{\beta})\frac{1}{2}\alpha_k\beta_k\norm{d_k}^2.
  \end{aligned}
\end{equation}
Otherwise, with $\alpha_k\leq 2C$ and $\delta_k^{\beta}\geq 0$, we apply in order~\eqref{eqn:simp-bundle-merit-pf-2p},~\eqref{eqn:simp-bundle-KKT-4} and the second inequality from Lemma~\ref{lem:line-search-alt} to obtain
\begin{equation} \label{eqn:simp-bundle-merit-pf-4}
 \centering
  \begin{aligned}
	  \phi_{1\theta_k}(x_k) - \phi_{1\theta_k}(x_{k+1})  =& r(x_k) -r(x_{k+1})+\theta_k\norm{c(x_k)}_1-\theta_k\norm{c(x_{k+1})}_1\\
			 >& -\eta_{\gamma}^+\beta_kg_k^T d_k-\eta_{\gamma}^+\frac{1}{2}\alpha_k\beta_k^2\norm{d_k}^2+\theta_k\norm{c(x_k)}_1-\theta_k\norm{c(x_{k+1})}_1 \\
			 \geq&\frac{1}{2}\eta_{\gamma}^+\alpha_k\beta_k\norm{d_k}^2+\eta_{\gamma}^+\beta_k(\lambda^{k+1})^Tc(x_k)+\theta_k\norm{c(x_k)}_1-\theta_k\norm{c(x_{k+1})}_1\\
			 \geq&\frac{1}{2}\eta_{\gamma}^+\alpha_k\beta_k\norm{d_k}^2-\eta_{\beta}\beta_k\frac{1}{2}\alpha_k\norm{d_k}^2\\
			 =&(\eta_{\gamma}^+-\eta_{\beta})\frac{1}{2}\alpha_k\beta_k\norm{d_k}^2,
  \end{aligned}
\end{equation}
with $\eta_{\gamma}^+-\eta_{\beta}>0$. Similarly, when $\delta_k^{\beta}\leq 0$, applying
in order~\eqref{eqn:simp-bundle-merit-pf-2m},~\eqref{eqn:simp-bundle-KKT-4} and the third inequality from Lemma~\ref{lem:line-search-alt}, we have 
\begin{equation} \label{eqn:simp-bundle-merit-pf-5}
 \centering
  \begin{aligned}
	  \phi_{1\theta_k}(x_k) - \phi_{1\theta_k}(x_{k+1})  =& r(x_k) -r(x_{k+1}^{\beta})+\theta_k\norm{c(x_k)}_1-\theta_k\norm{c(x_{k+1})}_1\\
			 >& -\eta_{\gamma}^-\beta_kg_k^T d_k-\eta_{\gamma}^-\frac{1}{2}\alpha_k\beta_k^2\norm{d_k}^2+\theta_k\norm{c(x_k)}_1-\theta_k\norm{c(x_{k+1})}_1 \\
			 \geq&\frac{1}{2}\eta_{\gamma}^-\alpha_k\beta_k\norm{d_k}^2+\eta_{\gamma}^-\beta_k(\lambda^{k+1})^Tc(x_k)+\theta_k\norm{c(x_k)}_1-\theta_k\norm{c(x_{k+1})}_1\\
			 \geq&\frac{1}{2}\eta_{\gamma}^-\alpha_k\beta_k\norm{d_k}^2-\eta_{\beta}\beta_k\frac{1}{2}\alpha_k\norm{d_k}^2\\
			 =&(\eta_{\gamma}^- -\eta_{\beta})\frac{1}{2}\alpha_k\beta_k\norm{d_k}^2,
  \end{aligned}
\end{equation}
where $\eta_{\gamma}^- -\eta_{\beta}>0$.

Therefore, in all cases, a serious step $x_{k+1}=x_k+\beta_kd_k$ is a 
decreasing direction for the merit function $\phi_{1\theta_k}(\cdot)$.
If $\lambda^{k}$ is finite for all $k$, $\theta_k$ will stay constant for $k$ large enough from step 7 of Algorithm~\ref{alg:simp-bundle}. Let $\bar{\theta}$ be the constant value for $k$ large enough so that $\theta_k \leq \bar{\theta}$ for all $k$.  
Then, by~\eqref{eqn:simp-bundle-c-ls-pf-5},
\begin{equation} \label{eqn:simp-bundle-merit-pf-6}
 \centering
  \begin{aligned}
	  \beta_k \geq \frac{1}{2} ^{\lceil \log_{\frac{1}{2}} \frac{\eta_{\beta} \alpha_0}{2 H^c_u\bar{\theta} m} \rceil}:=\bar{\beta},
  \end{aligned}
\end{equation}
due to the monotonicity of $\alpha_k$ and $\theta_k$. In other words, $\beta_k$ is bounded below by $\bar{\beta}$ for all $k$.
From~\eqref{eqn:simp-bundle-merit-pf-3},~\eqref{eqn:simp-bundle-merit-pf-4},~\eqref{eqn:simp-bundle-merit-pf-5}, $\phi_{1\theta_k}(x_k)-\phi_{1\theta_k}(x_{k+1})>(\eta_{\gamma}^+-\eta_{\beta})\frac{1}{2}\alpha_0 \bar{\beta}\norm{d_k}^2$. 
Or simply, there exists $c_{\phi}$ such that $\phi_{1\theta_k}(x_k)-\phi_{1\theta_k}(x_{k+1})>c_{\phi}\norm{d_k}^2$. 
\end{proof}
In order to obtain a stabilized $\lambda^k$, a constraint qualification is necessary 
for the constraints $c(x)=0$ in~\eqref{eqn:opt-ms-simp}.
In Section~\ref{sec:prob}, we discussed calmness as the weak constraint qualification that 
would ensure a KKT point instead of a Fritz-John critical point in our nonsmooth upper-$C^2$ setup.
Here, we resort to the stronger LICQ~\cite{Nocedal_book} to prove stabilization of Lagrange multipliers for our proposed algorithm. A topic of further research will be to derive the results of this section under a weak constraint qualification such as calmness. 

\begin{lemma}\label{lem:bounded-lp}
	If LICQ of the constraints in~\eqref{eqn:opt-ms-simp} are satisfied at every accumulation points $\bar{x}$ of serious steps $\{x_k\}$ generated by the algorithm, then 
	the sequence of Lagrange multipliers for the solutions to problem~\eqref{eqn:opt-ms-simp-bundle} $\{\zeta_u^{k+1}\},\{\zeta_l^{k+1}\}$ and $\{\lambda^{k+1}\}$ are bounded. Thus, there exists $k$, such that 
	$\norm{\lambda^{t}}_{\infty} \leq \lambda^U$, $\zeta^{t}_u \leq \zeta_u^U$ and $\zeta^{t}_l \leq \zeta_l^U$ for all $t\geq k$, where $\lambda^U>0,\zeta_u^U>0,\zeta_l^U>0$ are
	the upper bounds. Further, this means there exists $\bar{\theta}$ such that $\theta_t = \bar{\theta}$ for all $t\geq k$.
\end{lemma}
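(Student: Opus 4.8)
The plan is to argue by contradiction, combining the stationarity block of the subproblem optimality conditions \eqref{eqn:simp-bundle-KKT} with LICQ through a normalization/compactness argument, in the spirit of standard multiplier-boundedness proofs for SQP. The consequences for $\theta_k$ will then follow mechanically from step~7 of Algorithm~\ref{alg:simp-bundle}.

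First I would record the uniform bounds the argument rests on. Since the bound constraints $d_l^k\leq d\leq d_u^k$ with $d_l^k=-x_k$, $d_u^k=x_u-x_k$ are imposed in \eqref{eqn:opt-ms-simp-bundle} and $\beta_k\in(0,1]$, every serious iterate satisfies $0\leq x_k\leq x_u$, so $\{x_k\}$ stays in the compact box $[0,x_u]$ (and is infinite, since we are in the infinite-step case), while $d_k\in[-x_u,x_u]$ is bounded. As $r(\cdot)$ is Lipschitz, the Clarke subgradients $g_k\in\bar{\partial} r(x_k)$ are uniformly bounded; by Lemma~\ref{lem:sufficientdecrease} the coefficients $\alpha_k$ are bounded above; hence $\xi_k:=g_k+\alpha_k d_k$ is bounded, and $\nabla c(\cdot)$ is bounded and continuous on $[0,x_u]$ (it is $C^2$ by Assumption~\ref{assp:boundedHc}). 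The first line of \eqref{eqn:simp-bundle-KKT} reads $\nabla c(x_k)\lambda^{k+1}+\zeta_l^{k+1}-\zeta_u^{k+1}=\xi_k$, and by the complementarity relations in \eqref{eqn:simp-bundle-KKT-bound} together with $0<(x_u)_j$ for every $j$ (no coordinate can sit at both bounds), $\zeta_l^{k+1}$ and $\zeta_u^{k+1}$ have disjoint supports.

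Then I would suppose the conclusion fails, so $N_k:=\norm{(\lambda^{k+1},\zeta_l^{k+1},\zeta_u^{k+1})}_{\infty}$ is unbounded, and pass to a subsequence along which $N_k\to\infty$, $x_k\to\bar{x}$ (an accumulation point of serious steps, so LICQ holds there), $d_k\to\bar{d}$, the normalized multipliers $(\lambda^{k+1},\zeta_l^{k+1},\zeta_u^{k+1})/N_k\to(\bar{\lambda},\bar{\zeta}_l,\bar{\zeta}_u)\neq 0$, and the active index sets $A_l^k=\{j:(x_k+d_k)_j=0\}$, $A_u^k=\{j:(x_k+d_k)_j=(x_u)_j\}$ are constant along the subsequence, say $A_l$ and $A_u$ (finitely many possibilities). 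Dividing the stationarity equation by $N_k$ and letting $k\to\infty$ gives $\nabla c(\bar{x})\bar{\lambda}+\bar{\zeta}_l-\bar{\zeta}_u=0$ with $\bar{\zeta}_l,\bar{\zeta}_u\geq 0$, $(\bar{\zeta}_l)_j=0$ for $j\notin A_l$ and $(\bar{\zeta}_u)_j=0$ for $j\notin A_u$ (from the complementarity limits). Once one knows $\bar{d}=0$, this forces $\bar{x}_j=0$ for $j\in A_l$ and $\bar{x}_j=(x_u)_j$ for $j\in A_u$, so $\{\nabla c_j(\bar{x})\}_{j=1}^{m}\cup\{e_j:j\in A_l\cup A_u\}$ is a subfamily of the active-constraint gradients of \eqref{eqn:opt-ms-simp} at $\bar{x}$, and the limiting identity is a nontrivial linear dependence among them, contradicting LICQ at $\bar{x}$. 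This gives boundedness of $\{\lambda^{k+1}\},\{\zeta_l^{k+1}\},\{\zeta_u^{k+1}\}$ and hence the uniform bounds $\lambda^U,\zeta_u^U,\zeta_l^U$ for all large $k$; then $\theta_k=\max\{\theta_{k-1},\eta_{\gamma}^{-}\norm{\lambda^{k+1}}_{\infty}+\gamma\}$ is nondecreasing and bounded above by $\max\{\theta_0,\eta_{\gamma}^{-}\lambda^U+\gamma\}$, so it can change only finitely often and settles at a constant $\bar{\theta}$.

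The hard part will be the step ``$\bar{d}=0$'', i.e.\ showing that the bound constraints active for the QP solution in the limit are genuinely active at $\bar{x}$. I expect this to require coupling the contradiction hypothesis with the unconditional merit decrease from Lemma~\ref{lem:sufficientdecrease-merit} (or a direct argument that $d_k\to 0$ along the extracted subsequence); without it one only obtains a linear dependence among active-constraint gradients at the shifted point $\bar{x}+\bar{d}$, which need not violate LICQ at $\bar{x}$ itself.
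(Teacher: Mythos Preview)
Your plan mirrors the paper's own argument: rewrite the stationarity row of \eqref{eqn:simp-bundle-KKT} as $g_k+\alpha_k d_k=\nabla c(x_k)\lambda^{k+1}+\zeta_l^{k+1}-\zeta_u^{k+1}$, note the left side is bounded (Lipschitz $r$, bounded box, Lemma~\ref{lem:sufficientdecrease} for $\alpha_k$), exploit the disjoint supports of $\zeta_l^{k+1},\zeta_u^{k+1}$, and reach a contradiction with LICQ at an accumulation point. Your normalization-and-limit passage is simply a more careful formalization of what the paper compresses into one sentence (``the right-hand side of \eqref{eqn:simp-bundle-KKT-full-2} will be unbounded due to linear independence among the vectors''); the final $\theta_k$ conclusion from step~7 is handled identically.

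The step you flag as the hard part---that the bound-constraint active set is read off at $x_k+d_k$, so one needs $\bar{d}=0$ before LICQ at $\bar{x}$ applies to the relevant $e_i$'s---is not addressed in the paper's proof either. The paper defines $I$ as ``the index set of the active bound constraints'' (by \eqref{eqn:simp-bundle-KKT-bound}, active at $x_k+d_k$) and then asserts, without further argument, that ``from LICQ at $\bar{x}$, $\nabla c_j(\bar{x})$ and $e_i,\,i\in I$ are linearly independent,'' tacitly identifying $I$ with the active bounds at $\bar{x}$. So your plan is at least as rigorous as the paper's and is explicit about the lacuna. Note, however, that your proposed remedy via Lemma~\ref{lem:sufficientdecrease-merit} would be circular as written, since the uniform $c_\phi$-rate there already presupposes bounded $\lambda^k$ (through bounded $\theta_k$ and hence a uniform lower bound on $\beta_k$); the paper offers no alternative route past this point.
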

\begin{proof}
     We rewrite the first equation in optimality condition in~\eqref{eqn:simp-bundle-KKT} as
    \begin{equation} \label{eqn:simp-bundle-KKT-full}
     \centering
     \begin{aligned}
	     g_k + \alpha_k d_k - \sum_{j=1}^m \lambda^{k+1}_j \nabla c_j(x_k) -\sum_{i=1}^n (\zeta_l^{k+1})_i e_i 
	     +\sum_{i=1}^n (\zeta_u^{k+1})_i e_i &=0,\\
     \end{aligned}
    \end{equation}
    where $e_i \in \Rbb^n$ is a vector such that $(e_i)_i = 1$ and $(e_i)_k = 0,k\neq i$. 
    Since $(\zeta_l^{k+1})_i(\zeta_u^{k+1})_i = 0$,  the bound constraints Lagrange multipliers are combined into  $\zeta^{k+1}=\zeta_l^{k+1}- \zeta_u^{k+1}$. A component of $\zeta_l^{k+1}$ or $\zeta_u^{k+1}$ is unbounded if and only if the corresponding component in $\zeta^{k+1}$ is unbounded.
	Let $I$ be the index set of the active bound constraints, hence
    \begin{equation} \label{eqn:simp-bundle-KKT-full-2}
     \centering
     \begin{aligned}
	     g_k + \alpha_k d_k = \sum_{j=1}^m \lambda^{k+1}_j \nabla c_j(x_k) +\sum_{i \in I} (\zeta^{k+1})_i e_i.\\
     \end{aligned}
    \end{equation}
	Since $\{x_k\},\{g_k\}$ are bounded ($r(\cdot)$ being Lipschitz continuous on a bounded domain) and $\{\alpha_k\}$ is finite by Lemma~\ref{lem:sufficientdecrease}, 
	the left-hand side of the equation stays bounded throughout the iterations. 
	From LICQ at $\bar{x}$, we know that $\nabla c_j(\bar{x}) \in \Rbb^n $ and $e_i,i \in I$ are linearly independent and bounded vectors. 
        Without losing generality, suppose $\lambda^{k+1}_j,j\in[1,m]$ is not 
	bounded as $k\to\infty$. Then, we have $\norm{\lambda^{k}}_{\infty} \to \infty$. Passing on to a subsequence if necessary, we can assume $x_k\to\bar{x}$ as $k\to\infty$, where $\bar{x}$ is an accumulation point. Regardless of the behavior of $\{\zeta^{k+1}\}$, the right-hand side of~\eqref{eqn:simp-bundle-KKT-full-2} will be unbounded due to linear independence among the vectors.
	This is a contradiction. Same process can be repeated for $\zeta^{k+1}_j,j\in[1,m]$. 

	Therefore,  there exist $\lambda^U, \zeta_l^U\geq 0,\zeta_u^U\geq 0$ such that 
	$\norm{\lambda^{t}}_{\infty} \leq \lambda^U$, $\zeta^{t}_u \leq \zeta_u^U$ and $\zeta^{t}_l \leq \zeta_l^U$ for all $k$.
	Since $\theta_k$ is determined by $\lambda^k$ (step 7 in Algorithm~\ref{alg:simp-bundle}), there exists $k$ and $\bar{\theta}$ such that $\theta_t=\bar{\theta}$ for all $t\geq k$. 
\end{proof}

\begin{theorem}\label{thm:simp-KKT}
	Given the Assumptions~\ref{assp:upperC2} and~\ref{assp:boundedHc}, if the constraints in~\eqref{eqn:opt-ms-simp} satisfy the 
	conditions in Lemma~\ref{lem:bounded-lp}, then every accumulation point of the solution steps $\{x_k\}$ generated from Algorithm~\ref{alg:simp-bundle} 
	is a KKT point of the problem~\eqref{eqn:opt-ms-simp}. 
	That is, there exists a subsequence of $\{x_k\}$ that converges to $\bar{x}$, and $\bar{\lambda}\in\Rbb^m$, $\bar{\zeta}_u\in\Rbb^n$, $\bar{\zeta}_l\in\Rbb^n$ such that the first-order optimality conditions are satisfied at $\bar{x}$
\begin{equation} \label{eqn:simp-bundle-KKT-limit}
 \centering
  \begin{aligned}
	  0 \in \bar{\partial} r(\bar{x}) - \nabla c(\bar{x}) \bar{\lambda}-\bar{\zeta}_l +\bar{\zeta}_u&, \\
	  \bar{Z}_l\bar{x} = 0,\  \bar{Z}_u(\bar{x}-x_u) = 0&,\\
	  c(\bar{x}) = 0&,\\
	  \bar{\zeta}_l,\bar{\zeta}_u,\bar{x}, x_u-\bar{x} \geq 0&.\\
  \end{aligned}
\end{equation}
\end{theorem}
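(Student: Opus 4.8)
The plan is to exploit the chain of lemmas already in place to freeze all algorithmic parameters, extract a summable decrease of the merit function, and then pass to the limit in the KKT system of the subproblems along a convergent subsequence. First I would invoke Lemma~\ref{lem:sufficientdecrease} to fix an iteration past which $\alpha_k\equiv\bar\alpha$ and no step is rejected, so that every subsequent iteration yields a serious step $x_{k+1}=x_k+\beta_kd_k$. Because LICQ is assumed at every accumulation point of $\{x_k\}$, Lemma~\ref{lem:bounded-lp} then supplies uniform bounds on $\lambda^{k+1}$, $\zeta_l^{k+1}$, $\zeta_u^{k+1}$ and, crucially, makes the merit parameter constant, $\theta_k\equiv\bar\theta$, for $k$ large. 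Restricted to this tail, Lemma~\ref{lem:sufficientdecrease-merit} gives $\phi_{1\bar\theta}(x_k)-\phi_{1\bar\theta}(x_{k+1})>c_\phi\norm{d_k}^2$ with a fixed $c_\phi>0$.

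Next I would argue that $\norm{d_k}\to 0$. Since all iterates lie in the compact box $\{\,0\le x\le x_u\,\}$, $r(\cdot)$ is Lipschitz and hence bounded there while $\norm{c(\cdot)}_1\ge 0$, so $\phi_{1\bar\theta}$ is bounded below on the box; telescoping the decrease inequality over the tail yields $\sum_k\norm{d_k}^2<\infty$, hence $d_k\to 0$ and therefore $x_{k+1}-x_k\to 0$. Then I would take any accumulation point $\bar x$ with $x_{k_j}\to\bar x$: because the Clarke subdifferential of the Lipschitz $r$ is locally bounded on the compact domain, $\{g_{k_j}\}$ is bounded, and passing to a further subsequence I may assume $g_{k_j}\to\bar g$, $\lambda^{k_j+1}\to\bar\lambda$, $\zeta_l^{k_j+1}\to\bar\zeta_l\ge 0$, $\zeta_u^{k_j+1}\to\bar\zeta_u\ge 0$. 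Outer semicontinuity of $\bar{\partial}r$ gives $\bar g\in\bar{\partial}r(\bar x)$.

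The last step is to take limits term by term in~\eqref{eqn:simp-bundle-KKT} and~\eqref{eqn:simp-bundle-KKT-bound}. In the stationarity equation $g_{k_j}+\bar\alpha d_{k_j}-\nabla c(x_{k_j})\lambda^{k_j+1}-\zeta_l^{k_j+1}+\zeta_u^{k_j+1}=0$ the term $\bar\alpha d_{k_j}$ vanishes and $\nabla c$ is continuous, so I obtain $0\in\bar{\partial}r(\bar x)-\nabla c(\bar x)\bar\lambda-\bar\zeta_l+\bar\zeta_u$. Feasibility follows from $c(x_{k_j})=-\nabla c(x_{k_j})^Td_{k_j}\to 0$ together with continuity of $c$, so $c(\bar x)=0$. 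The complementarity relations $Z_u^{k_j+1}(x_{k_j}+d_{k_j}-x_u)=0$ and $Z_l^{k_j+1}(x_{k_j}+d_{k_j})=0$ pass to $\bar Z_u(\bar x-x_u)=0$ and $\bar Z_l\bar x=0$ since $x_{k_j}+d_{k_j}\to\bar x$, and the bound inequalities in~\eqref{eqn:simp-bundle-KKT-bound} pass to $\bar x\ge 0$ and $x_u-\bar x\ge 0$. Assembling these is exactly~\eqref{eqn:simp-bundle-KKT-limit}, so $\bar x$ is a KKT point of~\eqref{eqn:opt-ms-simp}.

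I expect the main obstacle to be the second step, namely establishing $\norm{d_k}\to 0$. It relies on two non-routine facts proved earlier: that $\theta_k$, and hence the entire merit function, stabilizes --- which is precisely where the LICQ hypothesis enters, through Lemma~\ref{lem:bounded-lp} --- and that the backtracking line search returns step sizes $\beta_k$ bounded away from $0$, so the per-iteration decrease carries a uniform constant $c_\phi$ rather than a vanishing one. A secondary delicate point is the subgradient passage $g_{k_j}\to\bar g\in\bar{\partial}r(\bar x)$, which must lean on local boundedness and outer semicontinuity of the Clarke subdifferential rather than on any lower-regularity of $r$, consistent with the upper-$C^2$ framework of the paper.
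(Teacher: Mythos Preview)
Your proposal is correct and follows essentially the same approach as the paper's proof: freeze $\alpha_k$ and $\theta_k$ via Lemmas~\ref{lem:sufficientdecrease} and~\ref{lem:bounded-lp}, use the uniform merit decrease from Lemma~\ref{lem:sufficientdecrease-merit} together with boundedness below of $\phi_{1\bar\theta}$ to force $\norm{d_k}\to 0$, and then pass to the limit in the subproblem KKT system~\eqref{eqn:simp-bundle-KKT} along a convergent subsequence using outer semicontinuity of the Clarke subdifferential. Your telescoping argument for $\sum_k\norm{d_k}^2<\infty$ is a slight elaboration of the paper's one-line observation that convergence of the monotone merit sequence forces $\norm{d_k}\to 0$, but the substance is identical.
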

\begin{proof}
	By Lemma~\ref{lem:sufficientdecrease}, there exists $k_0>0$ such that for all $t>k_0$, $\alpha_t=\alpha_{k_0}=\bar{\alpha}$ and 
	all following steps are serious steps.
	By Lemma~\ref{lem:bounded-lp}, there exists $k_1>0$ such that for $t>k_1$, the Lagrange multipliers are bounded above and $\theta_t=\theta_{k_1}=\bar{\theta}$. 
	We say $k$ is large enough if $k\geq \max{(k_0,k_1)}$, in which case the parameters of the algorithm stabilizes at $\alpha_t=\bar{\alpha}$ and $\theta_t=\bar{\theta}$ for $t\geq k$.

	Since the domain of $x$ is bounded and $r(\cdot)$ is Lipschitz, the serious steps sequence $\{x_k\}$ 
	as well as the subgradient sequence $\{g_{k}\}$ are bounded. 
	Therefore, there exists at least one accumulation point for $\{x_k\}$.
	Let $\bar{x}$ be an accumulation point of $\{x_k\}$ and $\{x_{k_s}\}$ be a subsequence of $\{x_k\}$ such that $x_{k_s}\to \bar{x}$.

	From Lemma~\ref{lem:line-search-merit}, line search terminates successfully and  
	by Lemma~\ref{lem:sufficientdecrease-merit}, for $k$ large enough, $\{\phi_{1\theta_k}(x_k)\}$ is a decreasing and bounded sequence 
	with a fixed parameter $\bar{\theta}$. 
	Thus, $\phi_{1\theta_k}(x_k)$ converges. Let $\lim_{k\to\infty} \phi_{1\theta_k}(x_k)\to \bar{\phi}_{1\bar{\theta}}$, \textit{i.e.}, $\lim_{k\to\infty}r(x_k)+\bar{\theta}\norm{c(x_k)}_1 \to \bar{\phi}_{1\bar{\theta}}$. 
	From the proof of Lemma~\ref{lem:sufficientdecrease-merit},~\eqref{eqn:simp-bundle-merit-pf-3},~\eqref{eqn:simp-bundle-merit-pf-4} and~\eqref{eqn:simp-bundle-merit-pf-5}, we know that $\phi_{1\theta_k}(x_k)-\phi_{1\theta_k}(x_{k+1})$ is 
	bounded below in the order of $\norm{d_k}^2$. Therefore,  $\lim_{k\to\infty} \norm{d_k}\to 0$. In particular, $\lim_{s\to \infty}\norm{d_{k_s}}\to 0$.
	By the last equation in~\eqref{eqn:simp-bundle-KKT}, $c(x_{k_s})\to 0$. Thus, $\bar{x}$ satisfies the equality constraints $c(\cdot)$.
        Given that the bound constraints are satisfied by all $x_k$, $0\leq \bar{x}\leq x_u$.

	Passing on to a subsequence if necessary, we let $g_{k_s}\to \bar{g}$, 
	$\lambda_{k_s} \to \bar{\lambda}$, $\zeta_u^{k_s} \to \bar{\zeta}_u$, $\zeta_l^{k_s} \to \bar{\zeta}_l$. 
        From the first equation in the optimality conditions~\eqref{eqn:simp-bundle-KKT}, we have 
        \begin{equation} \label{eqn:simp-bundle-KKT-limit-1}
          \centering
          \begin{aligned}
		  0 = \bar{g} - \nabla c(\bar{x}) \bar{\lambda}  -\bar{\zeta}_l +\bar{\zeta}_u. \\
          \end{aligned}
        \end{equation}
	By the outer semicontinuity of Clarke subdifferential, with $g_{k_s}\in \bar{\partial} r(x_{k_s})$, we have $\bar{g} \in \bar{\partial} r(\bar{x})$.
        As a result, 
	  $0 \in \bar{\partial} r(\bar{x}) -\nabla c(\bar{x}) \bar{\lambda} -\bar{\zeta}_l +\bar{\zeta}_u$.
	The complementarity conditions of bound constraints from~\eqref{eqn:simp-bundle-KKT} leads 
	to $\bar{Z}_u(\bar{x}-x_u)$, $\bar{Z}_l\bar{x}=0$. Together with the 
	equality constraints $c(\bar{x}) = 0$, the first-order
	optimality conditions~\eqref{eqn:simp-bundle-KKT-limit} of problem~\eqref{eqn:opt-ms-simp} at $\bar{x}$ are satisfied.

\end{proof}

While the line search is only conducted on the less computationally expensive and analytically known 
smooth constraints $c(\cdot)$, it is 
possible to avoid it altogether. In bundle methods, it has been shown that a convex feasible set for $x$ can make the 
algorithm converge without line search~\cite{hare2015}. Similarly, if the constraints form a convex feasible set, 
they do not need to be linearized and the simplified bundle algorithm converges without line search.  
\begin{proposition}\label{prop:convex-constraint}
	If the equality constraint $c(\cdot)$ and bound constraints in~\eqref{eqn:opt-ms-simp} form a convex and bounded set in $\Rbb^n$, 
	then instead of~\eqref{eqn:opt-ms-simp-bundle} we solve subproblem
        \begin{equation} \label{eqn:opt-ms-simp-bundle-2}
        \centering
         \begin{aligned}
          &\underset{\substack{x}}{\text{minimize}} 
	  & & \phi_k(x)\\
          &\text{subject to}
	  & & c(x) = 0, \\
	  &&& 0 \leq x\leq x_u.
  \end{aligned}
\end{equation}
	And the line search step can be skipped with $x_{k+1} = x_k +d_k$, $d_k$ being the solution to~\eqref{eqn:opt-ms-simp-bundle-2}.
	The convergence properties are maintained.
\end{proposition}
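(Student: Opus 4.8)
The plan is to re-run the convergence argument of Section~\ref{sec:alg-convg} in the simpler setting where the feasible set $\Fcal=\{x:c(x)=0,\ 0\le x\le x_u\}$ is convex and bounded and is carried \emph{exactly} in every subproblem, so that all constraint-violation terms vanish. First I would observe that, since $\phi_k(\cdot)$ is a strongly convex quadratic (coefficient $\alpha_k>0$) and $x_k\in\Fcal$, the subproblem~\eqref{eqn:opt-ms-simp-bundle-2} has a unique minimizer $x_{k+1}=x_k+d_k\in\Fcal$, and that $\delta_k=\phi_k(x_k)-\phi_k(x_{k+1})=-g_k^Td_k-\tfrac12\alpha_k\norm{d_k}^2\ge 0$ automatically, because $x_k$ is feasible for~\eqref{eqn:opt-ms-simp-bundle-2}. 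By induction every iterate satisfies $c(x_k)=0$, so the merit function~\eqref{eqn:opt-ms-simp-bundle-merit} collapses to $\phi_{1\theta_k}(x_k)=r(x_k)$ and no line search on the constraints is needed; this is precisely why the step $x_{k+1}=x_k+d_k$ may be taken unmodified.

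Next I would reproduce Lemma~\ref{lem:sufficientdecrease}: by the upper-$C^2$ inequality~\eqref{eqn:uppc2-def}, $r(x_k)-r(x_k+d_k)\ge -g_k^Td_k-C\norm{d_k}^2$, so as soon as $\alpha_k>2C$ we get $r(x_k)-r(x_k+d_k)>-g_k^Td_k-\tfrac12\alpha_k\norm{d_k}^2=\delta_k\ge\eta_l^+\delta_k$ (using $\delta_k\ge 0$ and $\eta_l^+\le 1$), hence $\rho_k>0$ and the step is accepted; since $\alpha_k$ is multiplied by $\eta_\alpha>1$ at each rejection, only finitely many rejected steps occur and $\alpha_k\equiv\bar\alpha$ for $k$ large. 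The first-order optimality condition of the convex subproblem~\eqref{eqn:opt-ms-simp-bundle-2} at $x_{k+1}$ reads $\langle g_k+\alpha_k d_k,\,x-x_{k+1}\rangle\ge 0$ for all $x\in\Fcal$; testing it at $x=x_k$ gives $-g_k^Td_k\ge\alpha_k\norm{d_k}^2$, whence $\delta_k\ge\tfrac12\alpha_k\norm{d_k}^2\ge\tfrac12\alpha_0\norm{d_k}^2$. Combining this with the acceptance rule $\rho_k>0$, every serious step satisfies $r(x_k)-r(x_{k+1})>\eta_l^+\delta_k\ge\tfrac12\eta_l^+\alpha_0\norm{d_k}^2$, which is the analogue of Lemma~\ref{lem:sufficientdecrease-merit} with the terms $\theta_k\norm{c(\cdot)}_1$ identically zero. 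Since $r(\cdot)$ is Lipschitz on the bounded set $\Fcal$, the monotone sequence $\{r(x_k)\}$ is bounded below and converges, which forces $\norm{d_k}\to 0$.

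Finally I would pass to the limit as in Theorem~\ref{thm:simp-KKT}. Let $\bar x$ be an accumulation point with $x_{k_s}\to\bar x$; since $\norm{d_{k_s}}\to 0$ we also have $x_{k_s+1}\to\bar x$, and, after extracting a further subsequence, $g_{k_s}\to\bar g$ with $\bar g\in\bar\partial r(\bar x)$ by outer semicontinuity of the Clarke subdifferential. Letting $s\to\infty$ in $\langle g_{k_s}+\bar\alpha d_{k_s},\,x-x_{k_s+1}\rangle\ge 0$ yields $\langle\bar g,\,x-\bar x\rangle\ge 0$ for all $x\in\Fcal$, \textit{i.e.}\ $-\bar g\in N_{\Fcal}(\bar x)$ together with $\bar x\in\Fcal$. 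Under the constraint qualification already assumed for $c(\cdot)$ (LICQ, as in Lemma~\ref{lem:bounded-lp}), the normal cone of $\Fcal$ at $\bar x$ decomposes as $\{\nabla c(\bar x)\bar\lambda-\bar\zeta_l+\bar\zeta_u:\ \bar\zeta_l,\bar\zeta_u\ge 0,\ \bar Z_l\bar x=0,\ \bar Z_u(\bar x-x_u)=0\}$, so this inclusion is exactly the KKT system~\eqref{eqn:simp-bundle-KKT-limit}; hence every accumulation point of $\{x_k\}$ is a KKT point of~\eqref{eqn:opt-ms-simp} and the convergence properties are maintained.

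The main obstacle is not a calculation but a bookkeeping point: one must check that removing the linearization and the $\beta_k$-line search does not break any of the earlier lemmas, which rests on the two observations above — that feasibility of $\Fcal$ is preserved along the iteration (so the merit function reduces to $r$) and that $\delta_k\ge 0$ holds unconditionally (so only the $\delta_k\ge 0$ branch of the acceptance test is ever exercised, and $\rho_k^\beta$ plays no role). The remaining care is in converting the limiting variational inequality $-\bar g\in N_{\Fcal}(\bar x)$ into the explicit multiplier form~\eqref{eqn:simp-bundle-KKT-limit}, which is where the standing constraint qualification on $c(\cdot)$ enters.
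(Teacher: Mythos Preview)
The paper states Proposition~\ref{prop:convex-constraint} without proof, so there is nothing to compare against directly. Your argument is correct and is precisely the natural adaptation of the Section~\ref{sec:alg-convg} analysis that the authors presumably had in mind: carrying $\Fcal$ exactly forces $c(x_k)\equiv 0$, so the merit function collapses to $r$, the $\delta_k<0$ branch never arises, and Lemmas~\ref{lem:sufficientdecrease} and~\ref{lem:sufficientdecrease-merit} go through with $\beta_k\equiv 1$ and the constraint terms dropped.

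One stylistic difference worth noting: you pass to the limit via the variational inequality $\langle g_k+\alpha_k d_k,\,x-x_{k+1}\rangle\ge 0$ and the normal-cone inclusion $-\bar g\in N_{\Fcal}(\bar x)$, whereas the paper's proof of Theorem~\ref{thm:simp-KKT} works directly with the subproblem KKT multipliers~\eqref{eqn:simp-bundle-KKT} and shows their boundedness (Lemma~\ref{lem:bounded-lp}) before taking limits. Your route is a bit cleaner here because the convexity of $\Fcal$ makes the variational inequality characterization immediate, and you only invoke LICQ at the very end to convert $N_{\Fcal}(\bar x)$ into explicit multipliers; the paper's route would instead write down the KKT system of~\eqref{eqn:opt-ms-simp-bundle-2}, bound $\lambda^{k+1},\zeta_l^{k+1},\zeta_u^{k+1}$ via LICQ along the iterates, and pass to the limit componentwise. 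Both arrive at~\eqref{eqn:simp-bundle-KKT-limit}. A minor bookkeeping caveat: your induction ``every iterate satisfies $c(x_k)=0$'' tacitly assumes $x_0\in\Fcal$; if not, simply observe that $x_1\in\Fcal$ regardless and start the argument from $k=1$.
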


\subsection{Application to two-stage stochastic optimization problem}\label{sec:app}
The algorithm and convergence analysis can be readily extended to two-stage stochastic programming problems, 
where the quadratic approximation function $\phi_k(\cdot)$ is needed only for the nonsmooth nonconvex 
second-stage solution functions. Problem~\eqref{eqn:opt-ms} is approximated locally as 
\begin{equation} \label{eqn:opt-ms-phi}
 \centering
  \begin{aligned}
   &\underset{\substack{x}}{\text{minimize}} 
	  & & f(x)+ \phi_k(x)\\
   &\text{subject to}
	  & & c(x) =c_E\\
	  &&& d^l \leq d(x) \leq d^u\\
	  &&& x^l \leq x \leq x^u.\\
  \end{aligned}
\end{equation}
The first-stage objective $f(\cdot)$, which is continuously differentiable, 
is kept as it is. As a result, we can take advantage of the sparsity structure arising
from $f(\cdot)$ since the Hessian of $\phi_k(\cdot)$ is diagonal.

The update rule of $\alpha_k$ is critical and problem dependent. 
It is a trade-off between robust convergence behavior (large $\alpha_k$) and fast but potentially unstable convergence (small $\alpha_k$). The $\alpha_k$ in Algorithm~\ref{alg:simp-bundle} does not 
bear much meaningful structure from the objective function since the Hessian itself might not exist. 
Nevertheless, for problems with better differentiability, it is 
possible to explore ways to extract more second-order information.

One option is to use the Barzilai-Borwein (BB) gradient method~\cite{barzilai_1988}, 
which can be interpreted as an approximation to the secant equation. The update rule for $\alpha_k$ is
\begin{equation} \label{eqn:alpha-BB}
 \centering
  \begin{aligned}
	  \alpha_k = \frac{s_{k-1}^Ty_{k-1}}{y_{k-1}^Ty_{k-1}},
  \end{aligned}
\end{equation}
where $s_{k-1}=x_k-x_{k-1},y_{k-1}=g_k-g_{k-1}$. 
This choice of $\alpha_k$ can in practice  
increase the convergence rate if the objective $r(\cdot)$ have more 
favorable properties~\cite{raydan_1993}.
Alternatively, $\alpha_k$ can be viewed as 
a measure of the inverse of a trust-region radius. The larger $\alpha_k$ is, the smaller the step size will be. Hence, 
$\alpha_k$ can be updated based on how accurate the previous approximation is, as
in trust-region methods~\cite{Nocedal_book}. This view is adopted in the proposed algorithm. 
A simple multiplication rule where $\alpha_{k+1}=\eta_{\alpha}\alpha_k,\eta_{\alpha}>1$ could be effective when $\alpha_k$ is 
increased.
In all cases, problem specific $\alpha_{max}$ and $\alpha_{min}$ can be assigned to 
make the algorithm more efficient and robust.
This is the area of the algorithm that is rich 
for experimentation.

It is also possible to gauge $\alpha_k$ based on function value, in addition to the trust-region ratio $\rho_k$. 
If the function value range of $r(\cdot)$ is known, 
such rules might provide better estimate of $\alpha_k$.
For example, we can find $\alpha_k$ by
requiring the minimum value of $\phi_k(\cdot)$ over a chosen subset of domain $X'\subset X$ to be larger than certain ratio of 
the function value at $x_k$, \textit{i.e.},  
\begin{equation} \label{eqn:rc-quad-appx-2}
 \centering
  \begin{aligned}
        &\underset{\substack{x\in X'}}{\text{minimize}} \ \phi_k(x) \geq \eta_k r_k
  \end{aligned}
\end{equation}
where $\eta_k$ is the chosen ratio. 

The same $\rho_k$ is computed and if $\rho_k>0$ is not satisfied as in step 8 and 14 of Algorithm~\ref{alg:simp-bundle},
$\eta_k$ is increased by the fixed increase ratio $\eta_{\alpha}$ with $\eta_{k+1}=\eta_{\alpha}\eta_k$.
Using $\eta_k$ as an intermediate parameter, $\alpha_k$ is then obtained 
as the minimum value that would hold~\eqref{eqn:rc-quad-appx-2} true.
The choice of $\alpha_k$ thus depends on local function value $r_k$ and subgradient $g_k$  
as well as $\eta_k$ and will no longer stay monotonic throughout the iterations as in Algorithm~\ref{alg:simp-bundle}.

More importantly in practice, $\alpha_k$ can be reduced when $\rho_k$ 
behaves well, \textit{e.g.}, is close to $1$. In our experience, reducing $\alpha_k$ 
helps to achieve convergence faster while the algorithm remains robust due to the mechanism of rejecting a step.
To further speed up convergence, scalar $\alpha_k$ can also be replaced by a diagonal matrix
with varying values. 
One way of specifying the diagonal values is to take into account the distance 
between a component of $x$ and its upper and lower bounds. 
It is possible that multiple components of the optimization variable $x$ reach their upper/lower bounds.
Since they are more likely to stay at the bounds, it is reasonable to assign them larger corresponding diagonal values
of the matrix $\alpha_k$ to encourage movement of other components of $x$. 
For a first-order algorithm, this could make a difference in
convergence and proves to be so in the SCACOPF application.

\subsection{Consistency restoration in linearized constraint}\label{sec:lincons}
As mentioned previously, the linearized constraints of the model subproblem~\eqref{eqn:opt-ms-simp-bundle} can become infeasible even when the original problem~\eqref{eqn:opt-ms-simp} is feasible, a phenomenon referred to as inconsistency, which is also present in SQP methods. In this section we propose a supplemental consistency restoration algorithm to tackle this difficulty. 
This algorithm solves, instead of~\eqref{eqn:opt-ms-simp-bundle}, a penalized subproblem  where the constraints are incorporated into the objective in hope of generating a new serious point with consistent linearized constraints.
As is common with penalty methods, the accumulation points might not be feasible KKT points. 
For the update rule of the penalty parameter,
we borrow an idea from a sequential linear-quadratic programming (SLQP) method in~\cite{byrd2005} that
relies on a feasibility problem solution.

Whenever problem~\eqref{eqn:opt-ms-simp-bundle} has inconsistent linearized constraints,
the following penalty problem is formulated:
\begin{equation} \label{eqn:opt-ms-simp-bundle-penal-nonsmooth}
 \centering
  \begin{aligned}
   &\underset{\substack{d}}{\text{minimize}} 
	  & & \pi_k \Phi_k(d) + \norm{c(x_k)+\nabla c(x_k)^T d}_1 \\
   &\text{subject to}
	  & & d_l^k\leq d\leq d_u^k,
  \end{aligned}
\end{equation}
where $\pi_k\geq 0$ is the penalty parameter. 
While~\eqref{eqn:opt-ms-simp-bundle-penal-nonsmooth} is straightforward, 
 to avoid the difficulties with nonsmooth objective, as conventional in SQP methods, the following equivalent quadratic programming problem is solved instead
\begin{equation} \label{eqn:opt-ms-simp-bundle-penal}
 \centering
  \begin{aligned}
   &\underset{\substack{d,v,w}}{\text{minimize}} 
	  & & \pi_k \Phi_k(d) + \sum_{j=1}^m (v_j + w_j)  \\
   &\text{subject to}
	  & & c_j(x_k)+\nabla c_j(x_k)^T d = v_j - w_j, \ j=1,...m,\\
	  &&& d_l^k \leq d\leq d_u^k,\\
	  &&& 0 \leq v, w ,
  \end{aligned}
\end{equation}
where $v,w\in \Rbb^m$ are slack variables. Denoting $d_k,v^k,w^k$ as the solutions to~\eqref{eqn:opt-ms-simp-bundle-penal}, 
the first-order optimality conditions of problem~\eqref{eqn:opt-ms-simp-bundle-penal} 
involving $d$ are
\begin{equation} \label{eqn:simp-bundle-penal-KKT-1}
  \centering
   \begin{aligned}
	   \pi_k \left[g_k + \alpha_k d_k\right] + \sum_{i=1}^m \lambda^{k+1}_j \nabla c_j(x_k)-\zeta_l^{k+1}+\zeta_u^{k+1} =0&,\\
	   \lambda^{k+1}_j \left[c_j(x_k)+\nabla c_j(x_k)^T d_k - v^k_j + w^k_j \right] =0, \ j=1,\dots,m,\\
	   c_j(x_k)+\nabla c_j(x_k)^T d_k - v^k_j + w^k_j =0, \ j=1,\dots,m,\\
	   Z_u^{k+1}(d_k-d_u^k) = 0, Z_l^{k+1}(d_k-d_l^k) = 0&,\\
	   \zeta_u^{k+1},\zeta_l^{k+1},d_k+x_k,x_u-x_k-d_k\geq 0&.\\
   \end{aligned}
 \end{equation}
Here, $\lambda^{k+1}\in\Rbb^m$,$\zeta_u^{k+1},\zeta_l^{k+1}\in \Rbb^n$ are the Lagrange multipliers for the constraints on $d$. 
The matrices $Z_u^{k+1},Z_l^{k+1}$ are diagonal matrices whose diagonal values are $\zeta_u^{k+1}$ and $\zeta_l^{k+1}$, respectively.
The remaining optimality conditions on slack variables $v$ and $w$ are 
\begin{equation} \label{eqn:simp-bundle-penal-KKT-2}
  \centering
   \begin{aligned}
	   1-\lambda^{k+1}_j - p^{k+1}_j =0, \ j=1,...m,\\
           1+\lambda^{k+1}_j-q^{k+1}_j=0, \ j=1,...m,\\
	   P^{k+1} v^k  = 0, Q^{k+1} w^k = 0&,\\
	   v^k,w^k,p^{k+1},q^{k+1} \geq 0&,\\
   \end{aligned}
 \end{equation}
where $p^{k+1},q^{k+1}\in\Rbb^m$ are the Lagrange multipliers for $v^k,w^k$.
The matrices $P^{k+1},Q^{k+1}$ are diagonal matrices whose diagonal values are $p^{k+1}$ and $q^{k+1}$, respectively.

Based on whether the slack variable bound constraints are active, the relations between Lagrange multipliers 
can be simplified. 
To see that, define the sign function $\sigma_j^k:\Rbb^n\to\Rbb,j=1,\dots,m$ of $d$ such that 
\begin{equation} \label{eqn:opt-ms-simp-sign}
 \centering
  \begin{aligned}
	  \sigma_j^k(d)  =  
	  \begin{cases}
		  -1,& c_j(x_k)+\nabla c_j(x_k)^T d < 0 \\  
		  \phantom{-} 0,& c_j(x_k) + \nabla c_j(x_k)^T d = 0\\
		  \phantom{-} 1,& c_j(x_k) + \nabla c_j(x_k)^T d > 0
    \end{cases}.
  \end{aligned}
\end{equation}
In addition, we divide the constraints into two sets based on the value of $ c_j(x_k)+ \nabla c_j(x_k)^T d_{k}$. 
For simplicity, the two sets are referred to as the set of active and inactive equality constraints as we do without slack variables. 
More specifically, the active equality constraint set is defined at $d_k$ as
\begin{equation} \label{eqn:opt-ms-simp-bundle-fea-A}
   \centering
    \begin{aligned}
     A_k = \{ 1\leq j\leq m|c_j(x_k)+\nabla c_j(x_k)^T d_{k} = 0\},
     \end{aligned}
\end{equation}
and the inactive equality constraint set is
\begin{equation} \label{eqn:opt-ms-simp-bundle-fea-I}
  \centering
    \begin{aligned}
      V_k = \{ 1\leq j\leq m|c_j(x_k)+\nabla c_j(x_k)^T d_{k} \neq 0\}.
    \end{aligned}
\end{equation}
We can now integrate the optimality conditions~\eqref{eqn:simp-bundle-penal-KKT-2} into~\eqref{eqn:simp-bundle-penal-KKT-1}
in the following Lemma.

\begin{lemma}\label{lem:lambda-relaxed-prop}
	For inactive equality constraints $c_j(\cdot), j\in V_k$, $\lambda_j^{k+1} = \sigma_j^k(d_k)$. For active equality constraints, \textit{i.e.}, $j\in A_k$, 
        $-1\leq \lambda_j^{k+1}\leq 1$.
\end{lemma}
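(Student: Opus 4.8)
The plan is that the statement follows essentially by inspection of the first-order optimality system \eqref{eqn:simp-bundle-penal-KKT-1}--\eqref{eqn:simp-bundle-penal-KKT-2}, with no extra ingredients beyond those stationarity and complementarity relations; since \eqref{eqn:opt-ms-simp-bundle-penal} is a convex quadratic program (its objective is convex in $(d,v,w)$ because $\alpha_k>0$ and $\pi_k\geq 0$, and its feasible set is polyhedral), a solution $d_k,v^k,w^k$ is guaranteed to come with multipliers $\lambda^{k+1},p^{k+1},q^{k+1}$ satisfying those relations, which is all that will be used.

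First I would establish the two-sided bound $-1\leq\lambda_j^{k+1}\leq 1$ for every $j=1,\dots,m$, and in particular for $j\in A_k$. The scalar stationarity equations $1-\lambda_j^{k+1}-p_j^{k+1}=0$ and $1+\lambda_j^{k+1}-q_j^{k+1}=0$ from \eqref{eqn:simp-bundle-penal-KKT-2}, together with the nonnegativity $p_j^{k+1}\geq 0$ and $q_j^{k+1}\geq 0$, give $\lambda_j^{k+1}=1-p_j^{k+1}\leq 1$ and $\lambda_j^{k+1}=q_j^{k+1}-1\geq -1$. This already settles the active case.

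Next, for $j\in V_k$ I would use the linearized feasibility equation $c_j(x_k)+\nabla c_j(x_k)^T d_k=v_j^k-w_j^k$ from \eqref{eqn:simp-bundle-penal-KKT-1}, whose left-hand side is nonzero by the definition \eqref{eqn:opt-ms-simp-bundle-fea-I} of $V_k$, so $\sigma_j^k(d_k)\in\{-1,1\}$. If $\sigma_j^k(d_k)=1$, then $v_j^k-w_j^k>0$ with $w_j^k\geq 0$ forces $v_j^k>0$; the complementarity $P^{k+1}v^k=0$ then gives $p_j^{k+1}=0$, and $1-\lambda_j^{k+1}-p_j^{k+1}=0$ yields $\lambda_j^{k+1}=1=\sigma_j^k(d_k)$. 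Symmetrically, if $\sigma_j^k(d_k)=-1$, then $w_j^k>0$, the complementarity $Q^{k+1}w^k=0$ gives $q_j^{k+1}=0$, and $1+\lambda_j^{k+1}-q_j^{k+1}=0$ yields $\lambda_j^{k+1}=-1=\sigma_j^k(d_k)$. These two alternatives exhaust $V_k$, which completes the argument.

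There is no genuine obstacle here: the lemma is a bookkeeping consequence of the KKT conditions already displayed, and the case split above is exhaustive. The only point deserving a sentence of care is the legitimacy of invoking the optimality system at all, i.e.\ noting that \eqref{eqn:opt-ms-simp-bundle-penal} is a convex QP so that the KKT conditions hold at any solution (or, equivalently, taking \eqref{eqn:simp-bundle-penal-KKT-1}--\eqref{eqn:simp-bundle-penal-KKT-2} as the defining relations for the multipliers returned by the subproblem solver), and, if one wishes, remarking that the conclusion is independent of whether $\pi_k>0$ or $\pi_k=0$ since the relations in \eqref{eqn:simp-bundle-penal-KKT-2} do not involve $\pi_k$.
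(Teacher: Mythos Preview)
Your proof is correct and follows essentially the same approach as the paper: both arguments read off the conclusions directly from the KKT system \eqref{eqn:simp-bundle-penal-KKT-1}--\eqref{eqn:simp-bundle-penal-KKT-2}. Your derivation of the bound $-1\leq\lambda_j^{k+1}\leq 1$ is in fact slightly more direct than the paper's (you avoid the detour through $v_j^k=w_j^k=0$ and the summed relations in \eqref{eqn:lambda-pq}, obtaining the inequalities immediately from $p_j^{k+1},q_j^{k+1}\geq 0$), but the logic is otherwise identical.
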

\begin{proof}
	Note first that for any $1\leq j\leq m$, the slack variable solutions satisfy $v^k_j w^k_j = 0$. This is 
	due to the bound constraints on $v,w$ and their presence in the objective.
	Next, we consider the three cases given the value of $c_j(x_k)+\nabla c_j(x_k)^T d_k$, which corresponds 
	to the three values of $\sigma_j^k(d_k)$ for $j=1,\dots,m$.
	The first two cases both have $j\in V_k$.
	If $\sigma_j^k(d_k)=1$, then by the third equation in~\eqref{eqn:simp-bundle-penal-KKT-1}, $v_j^k>0,w_j^k=0$.
	From the complementarity equations in~\eqref{eqn:simp-bundle-penal-KKT-2}, the corresponding Lagrange multiplier to $v_j^k$ is $0$, \textit{i.e.},
        $p^{k+1}_j=0$. By the first equation in~\eqref{eqn:simp-bundle-penal-KKT-2}, $\lambda_j^{k+1}=1$.
	If $\sigma_j^k(d_k)=-1$, then similarly using the third equation in~\eqref{eqn:simp-bundle-penal-KKT-1}, $v_j^k=0,w_j^k>0$ and the corresponding Lagrange multiplier  
$q^{k+1}_j=0$. By the second equation in~\eqref{eqn:simp-bundle-penal-KKT-2}, $\lambda_j^{k+1}=-1$. The first part of the Lemma is proven.

	In the last case, $j\in A_k$, \textit{i.e.}, $\sigma_j^k(d_k)=0$.  By the third equation in~\eqref{eqn:simp-bundle-penal-KKT-1}, we have $v_j^k=0,w_j^k=0$.  
Combine the first two equations in~\eqref{eqn:simp-bundle-penal-KKT-2} through summation and subtraction, we obtain 
      \begin{equation} \label{eqn:lambda-pq}
       \centering
        \begin{aligned}
		\lambda_j^{k+1}&= \frac{1}{2}(q_j^{k+1}-p_j^{k+1}), \\
		2&=p_j^{k+1}+q_j^{k+1}.\\
        \end{aligned}
       \end{equation}
	Applying the bound constraints on $p^{k+1},q^{k+1}$ to the second equation in~\eqref{eqn:lambda-pq}, we have $0\leq p_j^{k+1}\leq 2$, $0\leq q_j^{k+1}\leq 2$ 
	and therefore from the first equation in~\eqref{eqn:lambda-pq} $-1\leq \lambda_j^{k+1} \leq 1$.
\end{proof}

Similar to~\eqref{def:pd}, we define $\delta_k^{\pi_k}$ to be the change in objective of the penalty subproblem~\eqref{eqn:opt-ms-simp-bundle-penal} with penalty $\pi_k$, which based on~\eqref{eqn:opt-ms-simp-bundle-penal-nonsmooth} is 
\begin{equation}\label{def:pd-penal}
   \begin{aligned}
	   \delta_k^{\pi_k} =& \pi_k\left(-g_k^T d_k -\frac{1}{2}\alpha_k \norm{d_k}^2\right) +\norm{c(x_k)}_1 -\norm{c(x_k)+\nabla c(x_k)^T d_k}_1.\\
   \end{aligned}
\end{equation}
The ratios $\rho_k$ and $\rho_k^{\beta}$ are again used to address the nonsmoothness of $r(\cdot)$, 
whose definitions are given in~\eqref{eqn:decrease-ratio-1} and~\eqref{eqn:decrease-ratio-beta}.
The algorithm also requires line search given $d_k$ to obtain a serious step 
$x_{k+1}=x_k+\beta_k d_k$, $\beta_k\in (0, 1]$. 
To simplify the analysis, we adopt in this section $\eta_{\gamma}^-=\eta_{\gamma}^+=1$ and $\eta_l^-=\eta_l^+=1$, 
making the definition of $\rho_k,\rho_k^{\beta}$ in~\eqref{eqn:decrease-ratio-1} and~\eqref{eqn:decrease-ratio-beta} identical 
across branches.
Thus for a serious step, regardless of the value of $\alpha_k$, 
the same expression between predicted and actual change in $r(\cdot)$ is satisfied. 
That is, for a serious step, 
$r(x_k)-r(x_{k+1}) > \Phi_k(0)-\Phi_k(\beta_kd_k)$ and $r(x_k)-r(x_{k}+d_k) > \Phi_k(0)-\Phi_k(d_k)$. 

The renewed merit function and line search conditions are 
\begin{equation}\label{def:merit-fea}
   \begin{aligned}
	   \phi_{1\pi_k}(x)  =  r(x) + \frac{1}{\pi_k} \norm{c(x)}_1,
   \end{aligned}
\end{equation}
and 
\begin{equation}\label{eqn:line-search-cond-penal}
   \begin{aligned}
   \centering
	   \frac{1}{\pi_k} \norm{c(x_k)}_1  +  \frac{\beta_k}{\pi_k} (\lambda^{k+1})^T \nabla c(x_k)^T d_k  \geq
	   \frac{1}{\pi_k} \norm{c(x_{k+1})}_1 -\eta_{\beta}\frac{1}{2}\alpha_k\beta_k\norm{d_k}^2, 
   \end{aligned}
\end{equation}
respectively.

To update the penalty parameter, 
the following feasibility problem is also solved:
\begin{equation} \label{eqn:opt-ms-simp-bundle-fea}
 \centering
  \begin{aligned}
   &\underset{\substack{d}}{\text{minimize}} 
	  & & \norm{c(x_k) + \nabla c(x_k)^T d}_1\\
   &\text{subject to}
	  & & d_l^k\leq d\leq d_u^k.
  \end{aligned}
\end{equation}
Denote by $d_{k}^f$ the solution to~\eqref{eqn:opt-ms-simp-bundle-fea} and $\delta_k^f$ its predicted decrease,
whose form is 
\begin{equation}\label{def:pd-fea} 
   \begin{aligned}
	   \delta_k^f = \norm{c(x_k)}_1-\norm{c(x_k)+\nabla c(x_k)^T d_k^f}_1.
   \end{aligned}
\end{equation}
Notice that $\delta_k^f\geq 0$. This value is compared against $\delta_k^{\pi_k}$. 

The consistency restoration algorithm is given in Algorithm~\ref{alg:simp-bundle-const}. It is called upon by 
Algorithm~\ref{alg:simp-bundle} when inconsistency occurs at step 3  and exits after one serious step iteration in step 14 of Algorithm~\ref{alg:simp-bundle-const}. However, 
it is possible that the linearized constraints remain inconsistent and   
Algorithm~\ref{alg:simp-bundle-const} is called repeatedly. 
In this case, the update rule of the penalty parameter ensures that the algorithm converges toward critical points for linearized constraint violations. A point $\bar{x}$ is called a critical point of the linearized constraint violation of $c(\cdot)$ if $\delta_k^f=0$ at $\bar{x}$. Notice such a critical point can be either feasible or infeasible to the original problem~\eqref{eqn:opt-ms-simp}.
\begin{algorithm}
 \DontPrintSemicolon
 \SetAlgoNoLine 
   \caption{Simplified bundle method: consistency restoration}\label{alg:simp-bundle-const}
	Given $x_k$, $\alpha_k$, $r(x_k)$, $g(x_k)$, $\theta_k$ and other parameters such as $\epsilon$ from Algorithm~\ref{alg:simp-bundle}, choose the update coefficient $0<\eta_{\pi},\eta_f<1$ for $\pi_k$ and error tolerance $\epsilon^f\geq 0$.\; 
	If $\pi_{k-1}$ does not exist, let $\pi_k=\frac{1}{\theta_k}$. Otherwise let $\pi_k=\min{(\pi_{k-1},\frac{1}{\theta_k})}$. 
	Solve~\eqref{eqn:opt-ms-simp-bundle-penal} with $\pi_k$ and obtain $d_k$.\;
	Solve the feasibility problem~\eqref{eqn:opt-ms-simp-bundle-fea} to obtain solution $d_k^f$ and compute 
	 $\delta_k^f$ from~\eqref{def:pd-fea}.\;
	\If{$\delta_k^f < \epsilon^f$}{
         Stop the iteration and exit the algorithm. \;
        }
	\While{$\delta_k^{\pi_k}<\eta_f \delta_k^f$}{
		Reduce $\pi_k$ through $\pi_k = \eta_{\pi} \pi_{k}$ and re-solve~\eqref{eqn:opt-ms-simp-bundle-penal} with the updated $\pi_k$.\;
	}
	Obtain the solution $d_k$ and Lagrange multipliers $\lambda^{k+1}$ given $\pi_k$.
	Evaluate $r(x_k+d_k)$ and compute $\delta_k$ in~\eqref{def:pd} and $\rho_k$ in~\eqref{eqn:decrease-ratio-1}.\;
	\If{$\rho_k > 0$}{
		  Find the line search parameter $\beta_k>0$ using backtracking, starting at $\beta_k=1$ and halving 
		  if too large, such that~\eqref{eqn:line-search-cond-penal} is satisfied. Compute $\rho_k^{\beta}$ in~\eqref{eqn:decrease-ratio-beta}.\;
            \If{$\rho_k^{\beta} < 0$}{
		    Break and go to 17.\;
	    }
	    Take the serious step $x_{k+1} = x_k+\beta_k d_k$. \;
	    Exit consistency restoration. Go back to Algorithm~\ref{alg:simp-bundle} and start a new iteration.\;
	  }
	  \Else{
	    Reject the trial step and update $\alpha_k$ with $\alpha_{k+1}=\eta_{\alpha}\alpha_k$.\;
	    Go back to step 2.
	}
\end{algorithm}

While Algorithm~\ref{alg:simp-bundle-const} solves a penalized subproblem instead, 
it includes all the elements in Algorithm~\ref{alg:simp-bundle} to deal with the nonsmoothness of $r(\cdot)$, including the update rule for $\alpha_k$.
Thus, we can reuse many of the same conclusions from Section~\ref{sec:alg-convg}  
and only provide rigorous proofs if necessary.
Since the acceptance and rejection of a trial step is based on $\rho_k$ and $\rho_k^{\beta}$, which in turn 
solely relies on properties of $r(\cdot)$, Lemma~\ref{lem:sufficientdecrease} holds true, as claimed in the following Lemma.
\begin{lemma}\label{lem:sufficientdecrease-penal}
	Under the Assumption~\eqref{assp:upperC2} of upper-$C^2$ property for the objective $r(\cdot)$, the consistency restoration Algorithm~\ref{alg:simp-bundle-const} produces a finite number 
	of rejected steps. Consequently, the parameter $\alpha_k$ of Algorithm~\ref{alg:simp-bundle-const} stabilizes, \textit{i.e.}, there exists $k$ such that $\alpha_t = \alpha_k$ for all $t\geq k$. 
\end{lemma}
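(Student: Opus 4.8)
The plan is to reuse the argument of Lemma~\ref{lem:sufficientdecrease} almost verbatim, after observing that the only ingredients that proof actually uses are (i) the upper-$C^2$ inequality~\eqref{eqn:uppc2-def} for the objective $r(\cdot)$ and (ii) the monotone update rule $\alpha_{k+1}=\eta_{\alpha}\alpha_k$ with $\eta_{\alpha}>1$ applied on a rejected step, neither of which is affected by replacing the subproblem~\eqref{eqn:opt-ms-simp-bundle} by the penalized subproblem~\eqref{eqn:opt-ms-simp-bundle-penal}. Concretely, the acceptance/rejection test in Algorithm~\ref{alg:simp-bundle-const} is still governed by the ratios $\rho_k$ and $\rho_k^{\beta}$ from~\eqref{eqn:decrease-ratio-1} and~\eqref{eqn:decrease-ratio-beta}, which depend only on the values $r(x_k)$, $r(x_k+d_k)$, $r(x_k+\beta_k d_k)$ and on the quadratic model $\Phi_k$ through $\delta_k=\Phi_k(0)-\Phi_k(d_k)$ and $\delta_k^{\beta}=\Phi_k(0)-\Phi_k(\beta_k d_k)$. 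The fact that $d_k$ now solves~\eqref{eqn:opt-ms-simp-bundle-penal} rather than~\eqref{eqn:opt-ms-simp-bundle} is immaterial here, since $\delta_k,\delta_k^{\beta}$ are simply $\Phi_k$ evaluated at the iterate, the bound constraints $d_l^k\le d_k\le d_u^k$ are still enforced, so $x_k+d_k\in[0,x_u]$ lies in the bounded domain on which~\eqref{eqn:uppc2-def} holds with a single uniform constant $C>0$, and in this section $\eta_l^+=\eta_l^-=1$ and $\eta_{\gamma}^+=\eta_{\gamma}^-=1$, collapsing the branch structure of $\rho_k,\rho_k^{\beta}$.

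First I would record the upper-$C^2$ inequality in the form $r(x_k+d)-r(x_k)-g_k^T d\le C\norm{d}^2$, valid for all admissible $d$ with $g_k\in\bar{\partial} r(x_k)$ (from Assumption~\ref{assp:upperC2} and~\eqref{eqn:uppc2-def}, exactly as in~\eqref{eqn:opt-ms-appx-rec-p1}). Then I would show that whenever $\alpha_k>2C$ one has, taking $d=d_k$, $\ r(x_k)-r(x_k+d_k)\ge -g_k^T d_k-C\norm{d_k}^2 > -g_k^T d_k-\tfrac12\alpha_k\norm{d_k}^2=\Phi_k(0)-\Phi_k(d_k)=\delta_k$, so that with $\eta_l^+=\eta_l^-=1$ the single-branch definition of $\rho_k$ gives $\rho_k=r(x_k)-r(x_k+d_k)-\delta_k>0$. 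The identical computation with $\beta_k d_k$ in place of $d_k$ (using $\beta_k\in(0,1]$, just as in~\eqref{eqn:opt-ms-appx-rec-merit-beta}) yields $\rho_k^{\beta}>0$. Hence, once $\alpha_k$ exceeds $2C$, steps 11 and 13 of Algorithm~\ref{alg:simp-bundle-const} succeed and the $\alpha$-increase in step 18 is never triggered again.

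Finally I would close with the dichotomy of Lemma~\ref{lem:sufficientdecrease}: either some iterate reaches $\alpha_k>2C$, which, because of the geometric growth $\alpha_{k+1}=\eta_{\alpha}\alpha_k$, takes only finitely many rejected steps and after which $\alpha_t$ is constant; or $\alpha_k\le 2C$ for all $k$, in which case the nondecreasing sequence $\{\alpha_k\}$ is bounded above and therefore eventually constant. Either way only finitely many rejected steps occur, and $\alpha_k$ stabilizes. The only point requiring a word of care — and the closest thing to an obstacle — is making explicit that the rejection logic is insensitive to the switch of subproblem, to the interleaving of Algorithm~\ref{alg:simp-bundle-const} with Algorithm~\ref{alg:simp-bundle}, and to the inner penalty-reduction loop (steps 7--8 of Algorithm~\ref{alg:simp-bundle-const}), none of which modifies $\alpha_k$; once this is noted, the remainder is a direct transcription of the proof of Lemma~\ref{lem:sufficientdecrease}.
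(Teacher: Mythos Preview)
Your proposal is correct and takes essentially the same approach as the paper: the paper's own proof simply observes that Algorithm~\ref{alg:simp-bundle-const} has an identical rejection mechanism and $\alpha_k$-update rule to Algorithm~\ref{alg:simp-bundle}, depending only on $r(\cdot)$, and then invokes Lemma~\ref{lem:sufficientdecrease} directly. Your version spells out the details more explicitly---in particular the irrelevance of the subproblem switch, the collapse of the branch structure under $\eta_l^\pm=\eta_\gamma^\pm=1$, and the fact that the inner penalty-reduction loop does not touch $\alpha_k$---but the logical skeleton is identical.
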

\begin{proof}
	Since Algorithm~\ref{alg:simp-bundle-const} has identical mechanism for rejecting steps and increasing $\alpha_k$ to Algorithm~\ref{alg:simp-bundle}, which only relies on the property of $r(\cdot)$, the proof of Lemma~\ref{lem:sufficientdecrease} can be directly applied here.
	That is, only a finite number of rejected steps are needed to achieve $\alpha_k>2C$, which guarantees $\rho_k>0$, $\rho_k^{\beta}>0$, and produces a serious step. If $\alpha_k\leq 2C$ for all $k$, then only finite number of rejected steps are generated, which also ensures $\rho_k>0,\rho_k^{\beta}>0$ for $k$ large enough.
As a result, there exists a $k$ such that $\alpha_t=\alpha_k$ for $t\geq k$  (see proof of Lemma~\ref{lem:sufficientdecrease}), with a finite number of rejected steps produced.
\end{proof}
The following lemma shows that the update rule for $\pi_k$ in Algorithm~\ref{alg:simp-bundle-const} is well-defined.
\begin{lemma}\label{lem:consistency-penal-defined}
	The steps 6-7 in Algorithm~\ref{alg:simp-bundle-const} terminates successfully, \textit{i.e.}, there exists a $\pi_k>0$ such that $\delta_k^{\pi_k}\geq \eta_f \delta_k^f$ and such a $\pi_k$ can be found within finite steps.  
\end{lemma}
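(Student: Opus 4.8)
The plan is to fix the iteration index $k$ throughout, since the data $x_k,g_k,\alpha_k,c(x_k),\nabla c(x_k)$ defining the subproblems are frozen during steps~6--7 of Algorithm~\ref{alg:simp-bundle-const}, and to exhibit an explicit threshold on $\pi_k$ below which the test in step~6 is met. I would first set up notation: write $m_k(d):=\norm{c(x_k)+\nabla c(x_k)^Td}_1$ for the linearized infeasibility, $q_k(d):=-g_k^Td-\frac{1}{2}\alpha_k\norm{d}^2=\Phi_k(0)-\Phi_k(d)$ for the predicted decrease, and $B_k:=\{d\in\Rbb^n: d_l^k\le d\le d_u^k\}$ for the box. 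Since $x_k$ satisfies the bound constraints $0\le x_k\le x_u$, we have $d_l^k=-x_k\le 0\le x_u-x_k=d_u^k$, so $B_k$ is a nonempty compact box with $0\in B_k$, and $q_k(0)=0$, $m_k(0)=\norm{c(x_k)}_1$.

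Next I would rewrite the two relevant quantities as optimal values of box-constrained problems. Dropping the $d$-independent constant $\pi_k\Phi_k(0)$ from the objective of~\eqref{eqn:opt-ms-simp-bundle-penal-nonsmooth}, and using the standard equivalence with~\eqref{eqn:opt-ms-simp-bundle-penal}, any solution $d_k$ of the penalty subproblem minimizes $m_k(d)-\pi_k q_k(d)$ over $B_k$; denoting this optimal value $\Psi_k(\pi_k)$, the definition~\eqref{def:pd-penal} becomes $\delta_k^{\pi_k}=\pi_k q_k(d_k)+m_k(0)-m_k(d_k)=m_k(0)-\Psi_k(\pi_k)$, which incidentally shows $\delta_k^{\pi_k}$ is independent of which optimal $d_k$ is returned. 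Similarly $\delta_k^f=m_k(0)-\min_{d\in B_k}m_k(d)\ge 0$ by~\eqref{def:pd-fea}. Two one-line estimates then follow. Testing $d=0$ gives $\Psi_k(\pi_k)\le m_k(0)-\pi_k q_k(0)=m_k(0)$, hence $\delta_k^{\pi_k}\ge 0$ for all $\pi_k>0$. Testing a minimizer $d^\star$ of $m_k$ over the compact $B_k$, together with the finite bound $M_q:=\max_{d\in B_k}\lvert q_k(d)\rvert<\infty$ (continuity of the quadratic $q_k$ on a compact box), gives $\Psi_k(\pi_k)\le m_k(d^\star)+\pi_k M_q$, so
\begin{equation*}
\delta_k^{\pi_k}\;=\;m_k(0)-\Psi_k(\pi_k)\;\ge\;\delta_k^f-\pi_k M_q.
\end{equation*}

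To finish, I would combine these: if the loop is entered with $\delta_k^f>0$ (which holds whenever $\epsilon^f>0$ in step~5, and otherwise $\delta_k^f=0$ makes the test $\delta_k^{\pi_k}<\eta_f\delta_k^f$ fail at once since $\delta_k^{\pi_k}\ge 0$), then every $\pi_k$ with $0<\pi_k\le (1-\eta_f)\delta_k^f/M_q$ satisfies $\delta_k^{\pi_k}\ge\delta_k^f-(1-\eta_f)\delta_k^f=\eta_f\delta_k^f$ (read as ``any $\pi_k>0$'' in the degenerate case $M_q=0$, where $q_k\equiv 0$ on $B_k$ and $\delta_k^{\pi_k}=\delta_k^f$). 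Since step~7 multiplies $\pi_k$ by the fixed factor $\eta_\pi\in(0,1)$ at each pass, starting from the positive value fixed in step~2, $\pi_k$ decreases geometrically and drops below this threshold after finitely many passes, so the while loop terminates in finitely many steps with the required $\pi_k>0$. The only point needing care --- the ``hard part'', such as it is --- is the bookkeeping around well-definedness of $\delta_k^{\pi_k}$ under nonuniqueness of $d_k$ and the finiteness of $M_q$; both are handled cleanly once one passes to the optimal-value function $\Psi_k(\pi_k)$, and the remainder is a routine perturbation bound on a box-constrained minimization as $\pi_k\to 0^+$.
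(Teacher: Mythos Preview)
Your proof is correct and follows essentially the same approach as the paper: both test the feasibility direction $d_k^f$ (your $d^\star$) in the penalty subproblem to obtain the lower bound $\delta_k^{\pi_k}\ge\delta_k^f-\pi_k M_q$, then derive the threshold $\pi_k\le(1-\eta_f)\delta_k^f/M_q$ and invoke geometric decay of $\pi_k$. Your optimal-value formulation $\Psi_k(\pi_k)$ and the observation $\delta_k^{\pi_k}\ge 0$ (which cleanly disposes of the $\delta_k^f=0$ case without appealing to step~5) are mild refinements, but the core argument is identical to the paper's.
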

\begin{proof}
	Since $d_k$ is the solution to~\eqref{eqn:opt-ms-simp-bundle-penal} (and equivalently~\eqref{eqn:opt-ms-simp-bundle-penal-nonsmooth}), we have
	by~\eqref{def:pd-fea} and~\eqref{def:pd-penal} 
   \begin{equation} \label{eqn:simp-bundle-fea-finite}
   \centering
    \begin{aligned}
	    \delta_k^{\pi_k} \geq& \pi_k\left(-g_k^T d_k^f  - \frac{1}{2} \alpha_k \norm{d_k^f}^2\right) 
	    +\norm{c(x_k)}_1 - \norm{c(x_k)+ \nabla c(x_k)^T d_k^f}_1 \\
		     =& \pi_k\left(-g_k^T d_k^f  - \frac{1}{2} \alpha_k \norm{d_k^f}^2\right) + \delta_k^f \geq \pi_k\left(-\norm{g_k}\norm{d_k^f} - \frac{1}{2}\alpha_k\norm{d_k^f}^2\right)+ \delta_k^f,\\
    \end{aligned}
   \end{equation}
	where the last inequality uses Cauchy-Schwarz inequality.
	From Lemma~\ref{lem:sufficientdecrease-penal} and Lipschitz continuity, $d_k^f$, $g_k$, and $\alpha_k$ are all bounded. Assigning $D = \norm{d_u^k-d_l^k}=\norm{x_u}$, we derive the condition on $\pi_k$ such that $\delta_k^{\pi_k}\geq \eta_f \delta_k^f$ as 
   \begin{equation} \label{eqn:simp-bundle-fea-pi}
   \centering
    \begin{aligned}
	    \pi_k  \leq  \frac{(1-\eta_f)\delta_k^f}{ \norm{g_k} D + \frac{1}{2}\alpha_k D^2 }.\\
    \end{aligned}
   \end{equation}
	Thus, such $\pi_k>0$ exists as long as $\delta_k^f>0$. And since $\pi_k$ is reduced through $\eta_{\pi}<1$, only a finite number of steps are needed to obtain a $\pi_k$ that satisfied~\eqref{eqn:simp-bundle-fea-pi} through step 9.
	If $\delta_k^f = 0$ and consistency restoration Algorithm~\ref{alg:simp-bundle-const} is still called, the algorithm would terminate at step 5 (and had converged to a critical point to the linearized constraint). 
\end{proof}

\begin{lemma}\label{lem:line-search-merit-2}
	Given a nonzero penalty parameter $\pi_k>0$ and Assumption~\ref{assp:boundedHc}, the line search step 10 of Algorithm~\ref{alg:simp-bundle-const} finds $\beta_k\in (0,1]$ satisfying the condition~\eqref{eqn:line-search-cond-penal} in a finite number of steps.
\end{lemma}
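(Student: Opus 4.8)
The argument runs parallel to that of Lemma~\ref{lem:line-search-merit}, with the exact linearized feasibility $c(x_k)+\nabla c(x_k)^Td_k=0$ replaced by the relaxed relation furnished by the penalty subproblem. I would: (i) Taylor-expand each smooth constraint $c_j$ about $x_k$ along the ray $x_{k+1}=x_k+\beta_kd_k$, substituting the third equation of the KKT system~\eqref{eqn:simp-bundle-penal-KKT-1} to eliminate the gradient term; (ii) bound the quadratic remainder through Assumption~\ref{assp:boundedHc}; (iii) use Lemma~\ref{lem:lambda-relaxed-prop} to show that the first-order term $\tfrac{\beta_k}{\pi_k}(\lambda^{k+1})^T\nabla c(x_k)^Td_k$ in~\eqref{eqn:line-search-cond-penal} cannot hurt; and (iv) conclude that~\eqref{eqn:line-search-cond-penal} holds as soon as $\beta_k$ lies below an explicit positive threshold, so that backtracking by halving reaches it in finitely many steps.

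For step (i)–(ii): writing $\ell_j:=c_j(x_k)+\nabla c_j(x_k)^Td_k=v_j^k-w_j^k$ from~\eqref{eqn:simp-bundle-penal-KKT-1} and Taylor-expanding with the Hessian at an intermediate point gives $c_j(x_{k+1})=(1-\beta_k)c_j(x_k)+\beta_k\ell_j+\tfrac12\beta_k^2 d_k^TH_{k\beta}^j d_k$, so Assumption~\ref{assp:boundedHc} and $|\ell_j|=v_j^k+w_j^k$ (the slacks satisfy $v_j^kw_j^k=0$, as in the proof of Lemma~\ref{lem:lambda-relaxed-prop}) yield, upon summing over $j$, $\norm{c(x_{k+1})}_1\le(1-\beta_k)\norm{c(x_k)}_1+\beta_k\norm{c(x_k)+\nabla c(x_k)^Td_k}_1+m\beta_k^2H^c_u\norm{d_k}^2$. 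For step (iii): by Lemma~\ref{lem:lambda-relaxed-prop}, $\lambda_j^{k+1}=\sigma_j^k(d_k)$ whenever $\ell_j\neq0$ and $|\lambda_j^{k+1}|\le1$ whenever $\ell_j=0$, hence $\sum_j\lambda_j^{k+1}\ell_j=\sum_j|\ell_j|=\norm{c(x_k)+\nabla c(x_k)^Td_k}_1$; since $\nabla c_j(x_k)^Td_k=\ell_j-c_j(x_k)$ this gives $(\lambda^{k+1})^T\nabla c(x_k)^Td_k=\norm{c(x_k)+\nabla c(x_k)^Td_k}_1-\sum_j\lambda_j^{k+1}c_j(x_k)$, and $\bigl|\sum_j\lambda_j^{k+1}c_j(x_k)\bigr|\le\norm{c(x_k)}_1$ because $|\lambda_j^{k+1}|\le1$. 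Therefore $\norm{c(x_k)}_1+(\lambda^{k+1})^T\nabla c(x_k)^Td_k-\norm{c(x_k)+\nabla c(x_k)^Td_k}_1\ge0$.

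Combining, the left-hand side of~\eqref{eqn:line-search-cond-penal} minus its right-hand side is bounded below by $\tfrac{\beta_k}{\pi_k}\bigl[\norm{c(x_k)}_1+(\lambda^{k+1})^T\nabla c(x_k)^Td_k-\norm{c(x_k)+\nabla c(x_k)^Td_k}_1\bigr]+\beta_k\norm{d_k}^2\bigl(\tfrac12\eta_\beta\alpha_k-m\beta_kH^c_u/\pi_k\bigr)$; dropping the first bracket (nonnegative by step (iii)), the condition~\eqref{eqn:line-search-cond-penal} holds whenever $0<\beta_k\le\eta_\beta\alpha_k\pi_k/(2mH^c_u)$. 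By Lemma~\ref{lem:sufficientdecrease-penal}, $\alpha_k$ is finite; $\pi_k>0$ by hypothesis; and $\eta_\beta,H^c_u,m$ are fixed positive constants, so this threshold is a fixed positive number independent of the line search, and halving $\beta_k$ from $1$ attains it within $\lceil\log_{1/2}(\eta_\beta\alpha_k\pi_k/(2mH^c_u))\rceil$ steps, exactly as in~\eqref{eqn:simp-bundle-c-ls-pf-5}; the degenerate case $d_k=0$ is immediate since then both sides of~\eqref{eqn:line-search-cond-penal} equal $\norm{c(x_k)}_1/\pi_k$.

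\textbf{Main obstacle.} The only nonroutine point is step (iii): one must recognize that the sign/magnitude structure of the penalty-subproblem multipliers $\lambda_j^{k+1}$ supplied by Lemma~\ref{lem:lambda-relaxed-prop} makes $(\lambda^{k+1})^T\nabla c(x_k)^Td_k$ behave like a first-order model decrease of $\norm{c(\cdot)}_1$, so that the extra first-order term in~\eqref{eqn:line-search-cond-penal} — which has no counterpart in the unperturbed Lemma~\ref{lem:line-search-merit} — is harmless; the remaining Taylor-and-Hessian bookkeeping is routine.
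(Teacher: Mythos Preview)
Your proof is correct and follows essentially the same approach as the paper: Taylor-expand $c_j$ along the step, bound the quadratic remainder via Assumption~\ref{assp:boundedHc}, invoke Lemma~\ref{lem:lambda-relaxed-prop} to control the multiplier term, and arrive at the identical threshold $\beta_k\le \eta_\beta\alpha_k\pi_k/(2mH^c_u)$. Your presentation is slightly more streamlined---you use the identity $\sum_j\lambda_j^{k+1}\ell_j=\norm{c(x_k)+\nabla c(x_k)^Td_k}_1$ directly, whereas the paper splits explicitly into the active and inactive index sets $A_k,V_k$ and reassembles---but the substance is the same.
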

\begin{proof}
	Since $c(\cdot)$ is smooth,  by Taylor expansion of its $jth$ component,
   \begin{equation} \label{eqn:simp-bundle-fea-c-taylor}
   \centering
    \begin{aligned}
	    c_j(x_k+d_k) - c_j(x_k)  = \nabla c_j(x_k)^T d_k  + \frac{1}{2} d_k^T H_{k}^j d_k,\\
    \end{aligned}
   \end{equation}
	where $1\leq j \leq m$ and the Hessian $H_k^j$ depends on both $x_k$ and $d_k$. Similarly, 
       \begin{equation} \label{eqn:simp-bundle-fea-c-ls-pf-1}
         \centering
         \begin{aligned}
		 c_j(x_{k+1})  = & c_j(x_k)+ \beta_k \nabla c_j(x_k)^T d_{k} + \frac{1}{2}\beta_k^2 d_k^T H_{k\beta}^{j} d_k.\\
      \end{aligned}
     \end{equation}
	From Assumption~\ref{assp:boundedHc},
   \begin{equation} \label{eqn:simp-bundle-fea-c-ls-pf-2}
   \centering
    \begin{aligned}
	    |c_j(x_{k+1})|  &= | c_j(x_k)+ \beta_k \nabla c_j(x_k)^T d_{k}+ \frac{1}{2}\beta_k^2 d_k^T H_{k\beta}^{j} d_k|\\
	    &\leq | c_j(x_k) + \beta_k \nabla c_j(x_k)^T d_{k}| + \beta_k^2 H^{c}_u \norm{d_k}^2.
    \end{aligned}
   \end{equation}
	From the definition of $\sigma_j^k$ in~\eqref{eqn:opt-ms-simp-sign}, we can write 
   \begin{equation} \label{eqn:simp-bundle-fea-c-ls-pf-2.1}
   \centering
    \begin{aligned}
	    |c_j(x_k)+ \nabla c_j(x_k)^T d_{k}| = \sigma_j^k(d_k) \left[c_j(x_k)+ \nabla c_j(x_k)^T d_{k}\right].
    \end{aligned}
   \end{equation}
	Using the fact that the absolute function  $|\cdot|$ is convex, the function $|c_j(x_k)+\nabla c_j(x_k)^T d|$ is convex in $d$.
	Taking its value at $d=0,d=d_k$ and $\beta_k d_k$, we have by convexity 
 \begin{equation} \label{eqn:simp-bundle-fea-c-ls-pf-2.2}
   \centering
    \begin{aligned}
	    | c_j(x_k) + \beta_k \nabla c_j(x_k)^T d_{k}|  & \leq (1-\beta_k)| c_j(x_k)| + \beta_k |c_j(x_k)+ \nabla c_j(x_k)^T d_{k}|. 
    \end{aligned}
   \end{equation}
	Applying~\eqref{eqn:simp-bundle-fea-c-ls-pf-2.2} to~\eqref{eqn:simp-bundle-fea-c-ls-pf-2}, 
\begin{equation} \label{eqn:simp-bundle-fea-c-ls-pf-2.3}
   \centering
    \begin{aligned}
	    |c_j(x_{k+1})|  & \leq (1-\beta_k)| c_j(x_k)| + \beta_k |c_j(x_k)+ \nabla c_j(x_k)^T d_{k}| + \beta_k^2 H^{c}_u \norm{d_k}^2\\
		  & = (1-\beta_k)|c_j(x_k)| + \beta_k \sigma_j^k(d_k)\left[c_j(x_k)+ \nabla c_j(x_k)^T d_{k}\right] + \beta_k^2 H^{c}_u \norm{d_k}^2.
    \end{aligned}
   \end{equation}
	Let us denote the cardinality of $A_k$ and $V_k$ by $|A_k|$ and $|V_k|$, respectively. By summing up~\eqref{eqn:simp-bundle-fea-c-ls-pf-2.3} over $j\in V_k$ and realizing that $|c_j(x_k)|\geq \sigma_j^k(d_k) c_j(x_k)$, we have
   \begin{equation} \label{eqn:simp-bundle-fea-c-ls-pf-2.5}
     \centering
      \begin{aligned}
	      \sum_{j\in V_k}|c_j(x_{k+1})|  \leq& 
	       \sum_{j \in V_k}  |c_j(x_k)|+\beta_k\sum_{j \in V_k} \sigma_j^k(d_k)\nabla c_j(x_k)^Td_k 
	      + |V_k|\beta_k^2 H^{c}_u \norm{d_k}^2.\\
      \end{aligned}
    \end{equation}
	Similarly, we sum up~\eqref{eqn:simp-bundle-fea-c-ls-pf-2.3} over $j\in A_k$ and apply its definition in~\eqref{eqn:opt-ms-simp-bundle-fea-A} to write 
   \begin{equation} \label{eqn:simp-bundle-fea-c-ls-pf-2.6}
     \centering
      \begin{aligned}
\sum_{j\in A_k}|c_j(x_{k+1})|  \leq& 
	      (1-\beta_k)\sum_{j \in A_k}  |c_j(x_k)| + |A_k| \beta_k^2 H^{c}_u \norm{d_k}^2. \\
      \end{aligned}
    \end{equation}
	Further, by~\eqref{eqn:simp-bundle-fea-c-ls-pf-2.5} and~\eqref{eqn:simp-bundle-fea-c-ls-pf-2.6}, we have 
\begin{equation} \label{eqn:simp-bundle-fea-c-ls-pf-2.8}
     \centering
      \begin{aligned}
	      \sum_{j\in V_k}\left(|c_j(x_k)|-|c_j(x_{k+1})|\right)  \geq& 
	       -\beta_k \sum_{j \in V_k} \sigma_j^k(d_k)\nabla c_j(x_k)^Td_k 
	      - |V_k|\beta_k^2 H^{c}_u \norm{d_k}^2,\\
	      \sum_{j\in A_k}\left(|c_j(x_k)|-|c_j(x_{k+1})|\right)  \geq& 
	    \beta_k  \sum_{j \in A_k}  |c_j(x_k)| - |A_k| \beta_k^2 H^{c}_u \norm{d_k}^2. \\
      \end{aligned}
    \end{equation}
    Summing the two equations in~\eqref{eqn:simp-bundle-fea-c-ls-pf-2.8} and applying $|A_k|+|V_k|=m$ gives us 
\begin{equation} \label{eqn:simp-bundle-fea-c-ls-pf-3}
     \centering
      \begin{aligned}
	      \norm{c(x_k)}_1-\norm{c(x_{k+1})}_1 \geq  
	       -\beta_k \sum_{j \in V_k} \sigma_j^k(d_k)\nabla c_j(x_k)^T d_k 
	    +\beta_k  \sum_{j \in A_k}  |c_j(x_k)| - m\beta_k^2 H^{c}_u \norm{d_k}^2. \\
      \end{aligned}
    \end{equation}
    From Lemma~\eqref{lem:lambda-relaxed-prop} and the definition of $A_k$ in~\eqref{eqn:opt-ms-simp-bundle-fea-A}, we can write
\begin{equation} \label{eqn:simp-bundle-fea-c-ls-pf-3.1}
     \centering
      \begin{aligned}
	      \sum_{j=1}^m \lambda_j^{k+1}\nabla c_j(x_k)^T d_k =& \sum_{j\in V_k} \lambda_j^{k+1} \nabla c_j(x_k)^T d_k +\sum_{j\in A_k} \lambda_j^{k+1} \nabla c_j(x_k)^T d_k\\
	      =& \sum_{j\in V_k} \sigma_j^k(d_k) \nabla c_j(x_k)^T d_k -\sum_{j\in A_k} \lambda^{k+1}_j  c_j(x_k).\\
	      \geq& \sum_{j\in V_k} \sigma_j^k(d_k) \nabla c_j(x_k)^T d_k -\sum_{j\in A_k}  |c_j(x_k)|.\\
      \end{aligned}
    \end{equation}
    The inequality in~\eqref{eqn:simp-bundle-fea-c-ls-pf-3.1} comes from the second part of Lemma~\ref{lem:lambda-relaxed-prop}.
	Through simple algebraic calculations and applying~\eqref{eqn:simp-bundle-fea-c-ls-pf-3} and~\eqref{eqn:simp-bundle-fea-c-ls-pf-3.1} 
   \begin{equation} \label{eqn:simp-bundle-fea-c-ls-pf-3.5}
   \centering
    \begin{aligned}
	    & \frac{1}{\pi_k} \norm{c(x_k)}_1 -\frac{1}{\pi_k}\norm{c(x_{k+1})}_1 + \frac{\beta_k}{\pi_k} \sum_{j=1}^m \lambda^{k+1}_j\nabla c_j(x_k)^T d_k
	     \geq \\
	    &\qquad \frac{1}{\pi_k} \sum_{j \in A_k} \beta_k |c_j(x_k)|  
	      -\frac{\beta_k}{\pi_k} \sum_{j \in V_k} \sigma_j^k(d_k)\nabla c_j(x_k)^T d_k
	      - \frac{1}{\pi_k} m\beta_k^2 H^{c}_u \norm{d_k}^2 \\ 
	    & \qquad+ \frac{\beta_k}{\pi_k}\sum_{j\in V_k} \sigma_j^k(d_k) \nabla c_j(x_k)^T d_k
	     - \frac{\beta_k}{\pi_k} \sum_{j\in A_k} | c_j(x_k) | \geq 
	    - \frac{1}{\pi_k} m \beta_k^2 H^{c}_u \norm{d_k}^2.
    \end{aligned}
   \end{equation}
   Thus, if $\beta_k$ satisfies 
   \begin{equation} \label{eqn:simp-bundle-fea-c-ls-pf-4}
   \centering
    \begin{aligned}
	    0< \beta_k \leq \frac{\eta_{\beta} \alpha_k \pi_k}{2m H^c_u },
    \end{aligned}
   \end{equation}
   where both the denominator and numerator are positive and independent of the line search, we have~\eqref{eqn:line-search-cond-penal} 
   satisfied.
Using ceiling function $\lceil\cdot \rceil$, we can write 
   \begin{equation} \label{eqn:simp-bundle-fea-c-ls-pf-5}
   \centering
    \begin{aligned}
	    \beta_k \geq \frac{1}{2} ^{\lceil \log_{\frac{1}{2}} \frac{\eta_{\beta} \alpha_k\pi_k}{2 m H^c_u} \rceil}.
    \end{aligned}
   \end{equation}
	If ${\pi_k}>0$, the line search then successfully terminates after a finite number of steps based on the backtracking rule. 
\end{proof}


\noindent The decrease in merit function follows, similar to Lemma~\ref{lem:sufficientdecrease-merit}. 
\begin{lemma}\label{lem:sufficientdecrease-merit-penal}
	The serious step $x_{k+1}=x_k+\beta_k d_k$ is a descent step for the merit function~\eqref{def:merit-fea} if 
	$\beta_k$ is obtained through line search in Algorithm~\ref{alg:simp-bundle-const}.
        Further, if $\pi_k$ stabilizes at a finite value,
	\textit{i.e.}, there exists $k$ such that $\pi_t=\pi_k:=\bar{\pi}>0$ for all $t\geq k$, then the speed of descent 
	satisfies $\phi_{1\pi_k}(x_k)-\phi_{1\pi_k}(x_{k+1})>c_{\phi}^{\pi}\norm{d_k}^2$ for some $c_{\phi}^{\pi}>0$.
\end{lemma}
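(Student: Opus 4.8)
The plan is to follow the template of Lemma~\ref{lem:sufficientdecrease-merit}, replacing the subproblem~\eqref{eqn:opt-ms-simp-bundle} and merit function~\eqref{eqn:opt-ms-simp-bundle-merit} by the penalty subproblem~\eqref{eqn:opt-ms-simp-bundle-penal} and the merit function~\eqref{def:merit-fea}. Recall that, since $\eta_{\gamma}^{\pm}=\eta_l^{\pm}=1$ throughout this section, any serious step satisfies, \emph{regardless of} $\alpha_k$, the strict inequality $r(x_k)-r(x_{k+1})>\Phi_k(0)-\Phi_k(\beta_k d_k)=\delta_k^{\beta}=-\beta_k g_k^T d_k-\tfrac{1}{2}\alpha_k\beta_k^2\norm{d_k}^2$, as noted just before the statement. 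So it suffices to bound $\tfrac{1}{\pi_k}\bigl(\norm{c(x_k)}_1-\norm{c(x_{k+1})}_1\bigr)$ from below and add it to this inequality.

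First I would use the first-order conditions~\eqref{eqn:simp-bundle-penal-KKT-1}. Note that, unlike in~\eqref{eqn:simp-bundle-KKT}, the $\nabla c$ term there carries a $+$ sign, and no substitution $\nabla c(x_k)^T d_k=-c(x_k)$ is available because the linearized constraints are relaxed through the slacks $v,w$; hence I would carry $\nabla c(x_k)^T d_k$ through unchanged. Taking the inner product of the first equation in~\eqref{eqn:simp-bundle-penal-KKT-1} with $-d_k$, then applying the complementarity conditions $Z_l^{k+1}(d_k-d_l^k)=0$ and $Z_u^{k+1}(d_k-d_u^k)=0$ with $d_l^k=-x_k$, $d_u^k=x_u-x_k$, together with $\zeta_l^{k+1},\zeta_u^{k+1}\ge 0$ and $0\le x_k\le x_u$ (the serious iterates stay within the bounds), yields $-\pi_k g_k^T d_k-\pi_k\alpha_k\norm{d_k}^2\ge (\lambda^{k+1})^T\nabla c(x_k)^T d_k$. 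Multiplying by $\beta_k/\pi_k>0$ and using $\beta_k\in(0,1]$ to retain half of the quadratic term gives
\[
\delta_k^{\beta}\ \ge\ \frac{\beta_k}{\pi_k}(\lambda^{k+1})^T\nabla c(x_k)^T d_k+\tfrac{1}{2}\alpha_k\beta_k\norm{d_k}^2 .
\]

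Next I would rewrite the line search condition~\eqref{eqn:line-search-cond-penal} as $\tfrac{1}{\pi_k}\bigl(\norm{c(x_k)}_1-\norm{c(x_{k+1})}_1\bigr)\ge -\tfrac{\beta_k}{\pi_k}(\lambda^{k+1})^T\nabla c(x_k)^T d_k-\eta_{\beta}\tfrac{1}{2}\alpha_k\beta_k\norm{d_k}^2$, and add it to the serious-step bound $r(x_k)-r(x_{k+1})>\delta_k^{\beta}$ and the bound on $\delta_k^{\beta}$ just obtained; the $\tfrac{\beta_k}{\pi_k}(\lambda^{k+1})^T\nabla c(x_k)^T d_k$ terms cancel, leaving $\phi_{1\pi_k}(x_k)-\phi_{1\pi_k}(x_{k+1})>(1-\eta_{\beta})\tfrac{1}{2}\alpha_k\beta_k\norm{d_k}^2>0$ since $\eta_{\beta}<1$ and $\alpha_k,\beta_k>0$, which is the descent claim. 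For the quantitative statement, suppose $\pi_t=\bar{\pi}>0$ for all $t\ge k$. By Lemma~\ref{lem:sufficientdecrease-penal}, $\alpha_t$ is eventually constant, say $\alpha_t=\bar{\alpha}>0$; by the backtracking estimate~\eqref{eqn:simp-bundle-fea-c-ls-pf-5}, together with the monotonicity of $\alpha_k$ and eventual constancy of $\pi_k$, $\beta_t\ge\bar{\beta}:=(1/2)^{\lceil\log_{1/2}(\eta_{\beta}\bar{\alpha}\bar{\pi}/(2mH^c_u))\rceil}>0$ for $t$ large. Hence $\phi_{1\pi_k}(x_k)-\phi_{1\pi_k}(x_{k+1})>c_{\phi}^{\pi}\norm{d_k}^2$ with $c_{\phi}^{\pi}:=(1-\eta_{\beta})\tfrac{1}{2}\bar{\alpha}\bar{\beta}>0$.

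The main obstacle I anticipate is the sign bookkeeping in the penalty KKT system and, relatedly, that~\eqref{eqn:line-search-cond-penal} is phrased with $(\lambda^{k+1})^T\nabla c(x_k)^T d_k$ rather than $(\lambda^{k+1})^T c(x_k)$: because the linearized constraints are slackened, one cannot pass to $c(x_k)$ and must keep $\nabla c(x_k)^T d_k$ so that it matches exactly the quantity appearing in the line search inequality. A secondary point is that the uniform lower bound on $\beta_k$ needed for $c_{\phi}^{\pi}$ genuinely requires $\pi_k$ to stay bounded away from $0$, which is why the $\pi_k$-stabilization is imposed as a hypothesis here rather than derived.
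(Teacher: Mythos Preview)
Your proposal is correct and follows essentially the same route as the paper: both use the serious-step inequality $r(x_k)-r(x_{k+1})>\delta_k^{\beta}$ (from $\eta_{\gamma}^{\pm}=\eta_l^{\pm}=1$), take the inner product of the penalty KKT stationarity with $-d_k$, apply the box-constraint complementarity to reach $-\pi_k g_k^T d_k-\pi_k\alpha_k\norm{d_k}^2\ge(\lambda^{k+1})^T\nabla c(x_k)^T d_k$, then combine with the line-search condition~\eqref{eqn:line-search-cond-penal} to obtain $(1-\eta_{\beta})\tfrac{1}{2}\alpha_k\beta_k\norm{d_k}^2$. The only cosmetic difference is that the paper uses $\alpha_0$ rather than $\bar{\alpha}$ in the final constant $c_{\phi}^{\pi}$, which is immaterial since only existence of some $c_{\phi}^{\pi}>0$ is claimed.
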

\begin{proof}
	Since  $\rho_k>0,\rho_k^{\beta}>0$ at any serious step,  and $\eta_l^+=\eta_l^-=\eta_{\gamma}^+=\eta_{\gamma}^-=1$, we can compactly write based on definitions~\eqref{eqn:decrease-ratio-1} and~\eqref{eqn:decrease-ratio-beta}
         \begin{equation} \label{eqn:opt-ms-appx-rec-obj4}
         \centering
         \begin{aligned}
		 r_k - r(x_k+d_k) 
			> &  \Phi_k(0) -\Phi_k(d_k), \\
		 r_k - r(x_{k+1}) 
			> &  \Phi_k(0) -\Phi_k(\beta_k d_k).
         \end{aligned}
       \end{equation}
	Using the upper-$C^2$ property of $r(\cdot)$, as in~\eqref{eqn:opt-ms-appx-rec-merit-beta} 
	from Lemma~\ref{lem:sufficientdecrease-merit}, we have  
\begin{equation} \label{eqn:simp-bundle-merit-penal-pf-1}
 \centering
  \begin{aligned}
	  r(x_k) - r(x_{k+1}) >& -\beta_k g_{k}^Td_k -\frac{1}{2}\alpha_k\beta_k^2 \norm{d_k}^2. \\
  \end{aligned}
\end{equation}
	Let us rearrange the first equation in the KKT conditions~\eqref{eqn:simp-bundle-penal-KKT-1}  and obtain 
\begin{equation} \label{eqn:simp-bundle-penal-merit-pf-1}
  \centering
   \begin{aligned}
	   \pi_k \left[g_k + \alpha_k d_k \right] = -\sum_{j=1}^m\lambda_j^{k+1} \nabla c_j(x_k) 
	     + \zeta_l^{k+1}-\zeta_u^{k+1}.\\
   \end{aligned}
 \end{equation}
	Taking the dot product with $-d_k$ on both sides of~\eqref{eqn:simp-bundle-penal-merit-pf-1} leads to
\begin{equation} \label{eqn:simp-bundle-penal-merit-pf-2}
  \centering
   \begin{aligned}
	   - \pi_k \left[g_k^Td_k + \alpha_k\norm{d_k}^2\right] =& \sum_{j=1}^m\lambda^{k+1}_j \nabla c_j(x_k)^T d_k 
	    -d_k^T \zeta_l^{k+1} + d_k^T \zeta_u^{k+1}.\\
   \end{aligned}
 \end{equation}
	Recall that $d_l^k=-x_k$ and $d_u^k=x_u-x_k$.
	Applying the complementarity conditions, which are the fourth, fifth equation in~\eqref{eqn:simp-bundle-penal-KKT-1},~\eqref{eqn:simp-bundle-penal-merit-pf-2} is simplified to
\begin{equation} \label{eqn:simp-bundle-penal-merit-pf-2.5}
  \centering
   \begin{aligned}
	   -\pi_k  \left[g_k^Td_k + \alpha_k\norm{d_k}^2\right] =& \sum_{j=1}^m \lambda^{k+1}_j \nabla c_j(x_k)^T d_k 
	   -(d_k-d_l^k+d_l^k)^T\zeta_l^{k+1}+d_k^T\zeta_u^{k+1}\\
            =& \sum_{j=1}^m \lambda^{k+1}_j \nabla c_j(x_k)^T d_k 
	   +x_k^T\zeta_l^{k+1}+(d_k-d_u^k+d_u^k)^T\zeta_u^{k+1}\\
	   =& \sum_{j=1}^m \lambda_j^{k+1} \nabla c_j(x_k)^T d_k 
	    +x_k^T \zeta_l^{k+1}
	   +(x_u-x_k)^T \zeta_u^{k+1}\\
	   \geq& \sum_{j=1}^m \lambda_j^{k+1} \nabla c_j(x_k)^T d_k. 
   \end{aligned}
 \end{equation}
	The last inequality utilizes the bound constraints from the previous iteration $0\leq x_k\leq x_u$, and $\zeta_u^{k+1},\zeta_l^{k+1}\geq 0$.
	Multiplying by $\beta_k$ and 
	subtracting $\pi_k \frac{1}{2}\alpha_k\beta_k^2\norm{d_k}^2$ from both sides of~\eqref{eqn:simp-bundle-penal-merit-pf-2.5} 
	and using $\beta_k\in (0,1]$ results in
\begin{equation} \label{eqn:simp-bundle-penal-merit-pf-3}
  \centering
   \begin{aligned}
	   \pi_k\left[-\beta_kg_k^Td_k -\frac{1}{2}\alpha_k\beta_k^2\norm{d_k}^2 \right]
	   \geq& \pi_k \beta_k \alpha_k\norm{d_k}^2 -\pi_k \frac{1}{2}\alpha_k\beta_k^2\norm{d_k}^2 +\beta_k\sum_{j=1}^m \lambda_j^{k+1} \nabla c_j(x_k)^T d_k\\ 
	   \geq& \pi_k \frac{1}{2}\alpha_k\beta_k\norm{d_k}^2 +\beta_k\sum_{j=1}^m \lambda_j^{k+1} \nabla c_j(x_k)^T d_k\\ 
	   =& \pi_k \frac{1}{2}\alpha_k\beta_k\norm{d_k}^2 +\beta_k  (\lambda^{k+1})^T \nabla c(x_k)^T d_k. 
   \end{aligned}
 \end{equation}
	From~\eqref{eqn:simp-bundle-merit-penal-pf-1}, the merit function satisfies
\begin{equation} \label{eqn:simp-bundle-penal-merit-pf-4}
 \centering
  \begin{aligned}
	  \phi_{1\pi_k}(x_k) - \phi_{1\pi_k}(x_{k+1})  =& r(x_k) -r(x_{k+1})+ \frac{1}{\pi_k}\norm{c(x_k)}_1
		    -\frac{1}{\pi_k}\norm{c(x_{k+1})}_1\\
			 \geq& -\beta_kg_k^T d_k-\frac{1}{2}\alpha_k\beta_k^2\norm{d_k}^2+ \frac{1}{\pi_k}\norm{c(x_k)}_1
		    -\frac{1}{\pi_k}\norm{c(x_{k+1})}_1.\\
  \end{aligned}
\end{equation}
Applying~\eqref{eqn:simp-bundle-penal-merit-pf-3} and the line search condition~\eqref{eqn:line-search-cond-penal}, we have
\begin{equation} \label{eqn:simp-bundle-penal-merit-pf-5}
 \centering
  \begin{aligned}
	  \phi_{1\pi_k}(x_k) - \phi_{1\pi_k}(x_{k+1})  
			 \geq&\frac{1}{2}\alpha_k\beta_k\norm{d_k}^2 +\frac{\beta_k}{\pi_k} (\lambda^{k+1})^T \nabla c(x_k)^T d_k\\
			 &+ \frac{1}{\pi_k}\left(\norm{c(x_k)}_1 - \norm{c(x_{k+1})}_1\right)\\
			 \geq&\frac{1}{2}\alpha_k\beta_k\norm{d_k}^2-\eta_{\beta}\frac{1}{2}\alpha_k\beta_k\norm{d_k}^2\\
			 =&(1-\eta_{\beta})\frac{1}{2}\alpha_k\beta_k\norm{d_k}^2.
  \end{aligned}
\end{equation}
Given $\pi_k>0$, the conclusion follows. 
In addition, if $\pi_k$ stabilizes, based on~\eqref{eqn:simp-bundle-fea-c-ls-pf-5}
\begin{equation} \label{eqn:simp-bundle-penal-merit-pf-6}
 \centering
  \begin{aligned}
	  \beta_k \geq \frac{1}{2} ^{\lceil \log_{\frac{1}{2}} \frac{\eta_{\beta} \alpha_0 \bar{\pi}}{2m H^c_u} \rceil}:=\bar{\beta}^{\pi}.
  \end{aligned}
\end{equation}
Thus, from~\eqref{eqn:simp-bundle-penal-merit-pf-5}, $\phi_{1\pi_k}(x_k)-\phi_{1\pi_k}(x_{k+1})> (1-\eta_{\beta})\frac{1}{2}\alpha_0 \bar{\beta}^{\pi}\norm{d_k}^2$. Or equivalently, there exists $c_{\phi}^{\pi}>0$ such that $\phi_{1\pi_k}(x_k)-\phi_{1\pi_k}(x_{k+1})>c_{\phi}^{\pi}\norm{d_k}^2$.
\end{proof}
In general, the global convergence analysis from Section~\ref{sec:alg-convg} stands when $\pi_k$ is bounded away from $0$ and $\theta_k$ is bounded from above. This is reflected in the following two theorems similar to Theorem~\ref{thm:simp-KKT}.

%
\begin{theorem}\label{thm:simp-KKT-consistency-finite}
	Under the Assumptions~\ref{assp:upperC2},~\ref{assp:boundedHc} and LICQ conditions of Lemma~\ref{lem:bounded-lp}, if Algorithm~\ref{alg:simp-bundle-const} is called finite many times, then every accumulation points of the serious step sequence $\{x_k\}$ generated from Algorithm~\ref{alg:simp-bundle}
	and~\ref{alg:simp-bundle-const}
	is a KKT point of problem~\eqref{eqn:opt-ms-simp}. 
\end{theorem}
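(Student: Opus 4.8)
The plan is to reduce this theorem essentially to Theorem~\ref{thm:simp-KKT}. The crucial observation is that if Algorithm~\ref{alg:simp-bundle-const} is invoked only finitely many times, then there is an iteration index $k^{*}$ beyond which every iteration is a standard iteration of Algorithm~\ref{alg:simp-bundle} solving the consistent linearized subproblem~\eqref{eqn:opt-ms-simp-bundle}. Each of the finitely many restoration calls terminates after finitely many inner steps (by Lemmas~\ref{lem:sufficientdecrease-penal}, \ref{lem:consistency-penal-defined} and~\ref{lem:line-search-merit-2}) and ends with exactly one serious step, so the serious-step sequence $\{x_k\}$ is well defined; since the box $0\leq x\leq x_u$ is bounded, $\{x_k\}$ has at least one accumulation point, and $\{g_k\}$ is bounded because $r(\cdot)$ is Lipschitz on a bounded set.

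First I would argue that all algorithmic parameters stabilize. The quadratic coefficient $\alpha_k$ is nondecreasing across both algorithms and is increased only on rejected steps; by Lemma~\ref{lem:sufficientdecrease} and Lemma~\ref{lem:sufficientdecrease-penal} only finitely many rejections ever occur, so $\alpha_k=\bar\alpha$ for $k$ large. Each restoration phase fixes a value $\pi_k$ and, on return to Algorithm~\ref{alg:simp-bundle}, resets $\theta_k=\max\{\,1/\pi_{k-1},\,\eta_{\gamma}^{-}\norm{\lambda^{k+1}}_{\infty}+\gamma\,\}$; because this happens only finitely often and, by the LICQ hypothesis together with Lemma~\ref{lem:bounded-lp}, the multipliers $\{\lambda^{k+1}\}$, $\{\zeta_l^{k+1}\}$, $\{\zeta_u^{k+1}\}$ coming from~\eqref{eqn:opt-ms-simp-bundle} are bounded, the nondecreasing penalty $\theta_k$ is bounded above and stabilizes at some $\bar\theta$ for $k$ large.

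Next I would exploit the merit-function decrease on the tail. For $k$ large enough that $\alpha_k=\bar\alpha$, $\theta_k=\bar\theta$ and no further restoration occurs, Lemmas~\ref{lem:line-search-merit} and~\ref{lem:sufficientdecrease-merit} give a uniform lower bound $\beta_k\geq\bar\beta>0$ and the estimate $\phi_{1\bar\theta}(x_k)-\phi_{1\bar\theta}(x_{k+1})>c_{\phi}\norm{d_k}^2$. Since $r(\cdot)$ is bounded on the bounded domain and $\norm{c(\cdot)}_1\geq 0$, the sequence $\{\phi_{1\bar\theta}(x_k)\}$ is bounded below, hence convergent, and telescoping yields $\sum_k\norm{d_k}^2<\infty$, so $\norm{d_k}\to 0$. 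Passing to a subsequence $x_{k_s}\to\bar x$ with also $g_{k_s}\to\bar g$, $\lambda^{k_s+1}\to\bar\lambda$, $\zeta_l^{k_s+1}\to\bar\zeta_l$, $\zeta_u^{k_s+1}\to\bar\zeta_u$ (possible by boundedness), the feasibility relation $c(x_{k_s})=-\nabla c(x_{k_s})^{T}d_{k_s}$ from~\eqref{eqn:simp-bundle-KKT} together with bounded $\nabla c$ forces $c(\bar x)=0$; the box constraints pass to the limit directly; the stationarity equation $g_k+\alpha_k d_k-\nabla c(x_k)\lambda^{k+1}-\zeta_l^{k+1}+\zeta_u^{k+1}=0$ with $\alpha_{k_s}d_{k_s}\to 0$ gives $0=\bar g-\nabla c(\bar x)\bar\lambda-\bar\zeta_l+\bar\zeta_u$, and outer semicontinuity of $\bar{\partial} r$ gives $\bar g\in\bar{\partial} r(\bar x)$; finally the complementarity relations $Z_l^{k+1}(x_k+d_k)=0$ and $Z_u^{k+1}(x_k+d_k-x_u)=0$ from~\eqref{eqn:simp-bundle-KKT-bound} pass to the limit to yield $\bar Z_l\bar x=0$ and $\bar Z_u(\bar x-x_u)=0$. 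Hence $\bar x$ satisfies~\eqref{eqn:opt-ms-simp-KKT}, i.e.\ it is a KKT point of~\eqref{eqn:opt-ms-simp}.

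I expect the only genuinely delicate point to be the bookkeeping around the finitely many restoration calls: one must check that these calls cannot permanently destroy the monotonicity of $\alpha_k$ or drive $\theta_k$ to infinity through the $1/\pi_{k-1}$ update. Once one establishes that ``finitely many calls, each finite'' implies the tail of the run is a pure Algorithm~\ref{alg:simp-bundle} run with stabilized parameters $\bar\alpha,\bar\theta$ and $\beta_k\geq\bar\beta$, the conclusion follows by replaying the argument of Theorem~\ref{thm:simp-KKT} essentially verbatim on that tail.
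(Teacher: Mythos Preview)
Your proposal is correct and follows essentially the same approach as the paper: observe that after finitely many restoration calls the tail of the run consists solely of Algorithm~\ref{alg:simp-bundle} iterations with consistent linearized constraints, and then apply Theorem~\ref{thm:simp-KKT} to that tail. The paper's proof is even terser than yours, simply noting that for $k$ large enough only Algorithm~\ref{alg:simp-bundle} is called and Theorem~\ref{thm:simp-KKT} applies directly; your additional bookkeeping about the stabilization of $\alpha_k$ and $\theta_k$ across the restoration calls is a reasonable expansion of what the paper leaves implicit.
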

The proof is similar to that of Theorem~\ref{thm:simp-KKT} and straightforward.
Since there are only finite number of consistency restoration steps, the linearized constraints become consistent for $k$ large enough. Thus, only Algorithm~\ref{alg:simp-bundle} is called for $k$ large enough. We can directly apply Theorem~\ref{thm:simp-KKT} to obtain~\ref{thm:simp-KKT-consistency-finite}.

Before stating the next theorem, we note that $\pi_k$ and $\theta_k$ are designed to impact each other through step 7 of Algorithm~\ref{alg:simp-bundle} and step 1 in Algorithm~\ref{alg:simp-bundle-const}. 
Therefore, if $\frac{1}{\pi_k}$ does not stay bounded, $\theta_k$ will not either. On the other hand, if $\pi_k$ is bounded below from a nonzero value,  together with the conditions in Lemma~\ref{lem:bounded-lp}, both $\frac{1}{\pi_k}$ and $\theta_k$ are finite for all $k$. In addition, for $k$ large enough, both stop increasing. A stabilized $\pi_k$ at nonzero values essentially requires step 7 
in Algorithm~\ref{alg:simp-bundle-const} to be encountered only finitely many times. 
\begin{theorem}\label{thm:simp-KKT-consistency-bounded}
	Under the Assumptions~\ref{assp:upperC2},~\ref{assp:boundedHc} and LICQ conditions of Lemma~\ref{lem:bounded-lp}, if Algorithm~\ref{alg:simp-bundle-const} is called infinitely many times with nonzero stabilized penalty parameter, \textit{i.e.}, $\pi_t=\pi_k > 0$ for all $t\geq k$,
	then any accumulation points of the sequence $\{x_k\}$ generated by Algorithm~\ref{alg:simp-bundle} 
	and~\ref{alg:simp-bundle-const}
	is either a KKT point of~\eqref{eqn:opt-ms-simp} or a critical point 
	of the linearized constraint violation of $c(\cdot)$. 
\end{theorem}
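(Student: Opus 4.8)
The plan is to reprise the proof of Theorem~\ref{thm:simp-KKT}, but now keeping track of which of the two subproblems -- the consistent linearized QP~\eqref{eqn:opt-ms-simp-bundle} or the penalized QP~\eqref{eqn:opt-ms-simp-bundle-penal} -- is solved at each iteration, and splitting the analysis of an accumulation point accordingly. First I would record the stabilization of the scalar parameters. Lemma~\ref{lem:sufficientdecrease-penal} (together with Lemma~\ref{lem:sufficientdecrease}) gives a finite number of rejected steps and hence $\alpha_t=\bar\alpha$ for $t$ large; the hypothesis fixes $\pi_t=\bar\pi>0$ for $t$ large; and, as explained in the paragraph preceding the theorem, boundedness of the Lagrange multipliers (Lemma~\ref{lem:bounded-lp} at the consistent iterations, Lemma~\ref{lem:lambda-relaxed-prop} at the restoration iterations) together with step~7 of Algorithm~\ref{alg:simp-bundle} and step~1 of Algorithm~\ref{alg:simp-bundle-const} forces $\theta_t$ to stabilize at $\bar\theta=1/\bar\pi$. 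The crucial consequence is that for $k$ large the merit function $\phi_{1\bar\theta}(\cdot)=r(\cdot)+\bar\theta\norm{c(\cdot)}_1$ used on a consistent step coincides with the merit function $\phi_{1\bar\pi}(\cdot)=r(\cdot)+\tfrac{1}{\bar\pi}\norm{c(\cdot)}_1$ used on a restoration step.

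Next I would show $\norm{d_k}\to 0$. On a consistent step Lemma~\ref{lem:sufficientdecrease-merit} gives $\phi_{1\bar\theta}(x_k)-\phi_{1\bar\theta}(x_{k+1})>c_\phi\norm{d_k}^2$, and on a restoration step Lemma~\ref{lem:sufficientdecrease-merit-penal} gives $\phi_{1\bar\pi}(x_k)-\phi_{1\bar\pi}(x_{k+1})>c_\phi^\pi\norm{d_k}^2$; since these are the same function and it is bounded below on the compact feasible box, the decreases telescope and $\norm{d_k}\to 0$. Fix an accumulation point $\bar x$ with $x_{k_s}\to\bar x$. Passing to a further subsequence, either (A) every $k_s$ is an iteration at which the consistent QP~\eqref{eqn:opt-ms-simp-bundle} is solved, or (B) every $k_s$ is a restoration iteration; these cases are exhaustive because only one of the two types can occur infinitely often along the original subsequence.

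In case (A) the argument of Theorem~\ref{thm:simp-KKT} applies: from the linearized feasibility $c(x_{k_s})=-\nabla c(x_{k_s})^T d_{k_s}$ and $\norm{d_{k_s}}\to 0$ we obtain $c(\bar x)=0$; the multiplier sequences are bounded by Lemma~\ref{lem:bounded-lp}, so passing to a convergent subsequence in~\eqref{eqn:simp-bundle-KKT}--\eqref{eqn:simp-bundle-KKT-bound} and using $\alpha_{k_s}=\bar\alpha$, $\norm{d_{k_s}}\to 0$ and the outer semicontinuity of $\bar{\partial} r$ yields~\eqref{eqn:simp-bundle-KKT-limit}, so $\bar x$ is a KKT point of~\eqref{eqn:opt-ms-simp}. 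In case (B) I would use the acceptance mechanism of Algorithm~\ref{alg:simp-bundle-const}: steps~6--7 guarantee $\delta_{k_s}^{\pi_{k_s}}\geq\eta_f\,\delta_{k_s}^f\geq 0$ (Lemma~\ref{lem:consistency-penal-defined}, and $\delta_k^f\geq 0$ always), while the explicit form~\eqref{def:pd-penal} of $\delta_k^{\pi_k}$ shows $\delta_{k_s}^{\pi_{k_s}}\to 0$, since $\bar\pi\big(-g_{k_s}^T d_{k_s}-\tfrac{1}{2}\bar\alpha\norm{d_{k_s}}^2\big)\to 0$ (Lipschitz $r$, $\norm{d_{k_s}}\to 0$) and $\norm{c(x_{k_s})}_1-\norm{c(x_{k_s})+\nabla c(x_{k_s})^T d_{k_s}}_1\to 0$ (bounded $\nabla c$, $\norm{d_{k_s}}\to 0$). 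Hence $\delta_{k_s}^f\to 0$. Since $\delta_k^f$ is the gap between $\norm{c(x_k)}_1$ and the optimal value of the feasibility subproblem~\eqref{eqn:opt-ms-simp-bundle-fea}, and that optimal value is a continuous function of $x_k$ (continuous objective over the box $[-x_k,\,x_u-x_k]$ which varies continuously, so Berge's maximum theorem applies), we get $\delta^f(\bar x)=0$; thus $\bar x$ is a critical point of the linearized constraint violation of $c(\cdot)$. Combining the two cases proves the theorem.

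The main obstacle I anticipate is the bookkeeping in the first step: one must verify that the interlocking updates of $\theta_k$ (step~7 of Algorithm~\ref{alg:simp-bundle}) and $\pi_k$ (step~1 of Algorithm~\ref{alg:simp-bundle-const}) actually drive $\theta_k$ to exactly $1/\bar\pi$ -- rather than merely to some $\bar\theta\geq 1/\bar\pi$ -- because only then do the two merit functions coincide and the telescoping decrease runs through cleanly; without coincidence one would have to argue descent of a single majorizing merit function at restoration steps, which is delicate since the line-search condition~\eqref{eqn:line-search-cond-penal} permits $\norm{c(x_{k+1})}_1$ to exceed $\norm{c(x_k)}_1$. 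A secondary, more routine point is the continuity (via Berge's maximum theorem) of the optimal value of~\eqref{eqn:opt-ms-simp-bundle-fea} in $x$, which is what promotes $\delta_{k_s}^f\to 0$ to $\delta^f(\bar x)=0$.
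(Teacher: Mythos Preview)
Your proposal is correct and follows the paper's overall strategy: stabilize $\alpha_k,\pi_k,\theta_k$, use the coincidence of the two merit functions to telescope the descent inequalities and obtain $\norm{d_k}\to 0$, then analyze an accumulation point. The one genuine difference is the case split. The paper splits on \emph{feasibility} of $\bar x$: if $c(\bar x)=0$ it asserts that the argument of Theorem~\ref{thm:simp-KKT} yields a KKT point, and if $c(\bar x)\neq 0$ it deduces $\delta_k^{\pi_k}\to 0$ from~\eqref{def:pd-penal} and hence $\delta_k^f\to 0$ via step~6 of Algorithm~\ref{alg:simp-bundle-const}. You instead split on the \emph{iteration type} along a subsequence (consistent versus restoration). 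Your split is arguably cleaner: in the paper's feasible case one must in principle still handle the possibility that the converging subsequence consists of restoration iterations, where the relevant optimality system is~\eqref{eqn:simp-bundle-penal-KKT-1} rather than~\eqref{eqn:simp-bundle-KKT}; the paper's ``proof same as Theorem~\ref{thm:simp-KKT}'' is a bit glib there. On the other hand, the paper's split yields the slightly sharper conclusion that a feasible accumulation point is always KKT, whereas your Case~(B) leaves a feasible $\bar x$ classified only as a critical point of the linearized constraint violation (which is still sufficient for the theorem as stated).

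One small wording slip: it is not true that ``only one of the two types can occur infinitely often along the original subsequence'' --- both may --- but you only need that at least one does, which is automatic and justifies passing to a further subsequence. Your two flagged obstacles are well placed: the paper handles the $\theta_k=1/\bar\pi$ coincidence by assertion (relying on the interlock between step~7 of Algorithm~\ref{alg:simp-bundle} and step~1 of Algorithm~\ref{alg:simp-bundle-const}), and it does not explicitly invoke a continuity argument to pass from $\delta_{k_s}^f\to 0$ to $\delta^f(\bar x)=0$; your use of Berge's theorem there is the right tool.
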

The proof is again similar to that of Theorem~\ref{thm:simp-KKT} and a brief framework is presented here.
First, by Lemma~\ref{lem:sufficientdecrease-penal}, let $k$ be large enough such that $\alpha_t = \alpha_k$ for all $t\geq k$ and all steps produced by both algorithms are serious steps.
We note that by design both penalty parameters $\theta_k$ and $\frac{1}{\pi_k}$ in merit function~\eqref{eqn:opt-ms-simp-bundle-merit} and~\eqref{def:merit-fea} increase monotonically across iterations and algorithms. By Lemma~\ref{lem:bounded-lp}, $\lambda^k$ is bounded for $k$ large enough. Given a nonzero and stabilized $\pi_k$ for $k$ large enough, $\theta_k$ is also bounded and remains constant based on step 7 in Algorithm~\ref{alg:simp-bundle}.
Together with step 7 in Algorithm~\ref{alg:simp-bundle-const}, the merit functions $\phi_{1\theta_k}(\cdot)$ and  $\phi_{1\pi_k}(\cdot)$ for both algorithms~\eqref{eqn:opt-ms-simp-bundle-merit} and~\eqref{def:merit-fea} have the same parameter $\theta_k=\frac{1}{\pi_k}=\bar{\theta}$, where $\bar{\theta}$ denotes the stabilized value.
Hence, by Lemma~\ref{lem:sufficientdecrease-merit} and~\ref{lem:sufficientdecrease-merit-penal}, $\{\phi_{1\pi_k}(x_k)\}$ and $\{\phi_{1\theta_k}(x_k)\}$ at serious steps $x_k$ decreases monotonically in the order of $\norm{d_k}^2$ and is bounded below. Therefore, $d_k$ generated by both algorithms, regardless of the order they are called, satisfies $d_k \to 0$.

Compared to the case in Theorem~\ref{thm:simp-KKT-consistency-finite}, it is possible that a finite number of calls of Algorithm~\ref{alg:simp-bundle} are followed by 
all consistency restoration calls of Algorithm~\ref{alg:simp-bundle-const}, in which case an accumulation point of $\{x_k\}$ might be infeasible. This can be seen from the constraints of the penalty problem in~\eqref{eqn:opt-ms-simp-bundle-penal} where $v,w$ could be nonzero at an accumulation point $\bar{x}$ of $\{x_k\}$ even as $d_k\to 0$. 

If an accumulation point $\bar{x}$ is feasible, \textit{i.e.}, $c(\bar{x}) = 0$, then $\{x_k\}$ converges subsequently 
to a KKT point of problem~\eqref{eqn:opt-ms-simp}, the proof of which is the same as in Theorem~\ref{thm:simp-KKT} at this point. 
Otherwise, if $\bar{x}$ is infeasible, \textit{i.e.}, $c(\bar{x})\neq 0$, it is not a KKT point. 
Meanwhile, from~\eqref{def:pd-penal}, given still $d_k\to 0$, we have $\delta_k^{\pi_k}\to 0$. 
From step 7 in Algorithm~\ref{alg:simp-bundle-const}, the update rule of $\pi_k$ enforces $\eta_f \delta_k^f\leq \delta_k^{\pi_k}$. Therefore, with a constant parameter $\eta_f$, we obtain $\delta_k^f\to 0$.
Hence, for an infeasible accumulation point $\bar{x}$, the update rule for the penalty parameter $\pi_k$
results in $\bar{x}$ being a critical point of linearized constraint violation of $c(\cdot)$. 
This convergence result is similar to the exact penalty method for smooth objectives~\cite{Nocedal_book}.

Finally, the following theorem covers the case when the penalty parameter $\pi_k \to 0$.
\begin{theorem}\label{thm:simp-KKT-consistency-unbounded}
	Under the Assumptions~\ref{assp:upperC2},~\ref{assp:boundedHc} and LICQ conditions of Lemma~\ref{lem:bounded-lp}, if Algorithm~\ref{alg:simp-bundle-const} is called
	infinite many times and $\pi_k \to 0$,
	then any accumulation points of the serious steps $\{x_k\}$ generated from Algorithm~\ref{alg:simp-bundle}
	and~\ref{alg:simp-bundle-const}
	is a critical point of the linearized constraint violation $c(\cdot)$. 
\end{theorem}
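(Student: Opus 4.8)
The plan is to first reduce the statement to a claim about a single subsequence, then exploit the penalty‑update rule of Algorithm~\ref{alg:simp-bundle-const} to prove that claim along a distinguished subsequence, and finally to propagate it to every accumulation point. First I would set the stage: by Lemmas~\ref{lem:sufficientdecrease-penal} and~\ref{lem:sufficientdecrease}, $\alpha_k$ is non‑decreasing, only finitely many steps are rejected, and $\alpha_t=\bar\alpha$ for $t$ beyond some index, so for $k$ large every step produced by Algorithm~\ref{alg:simp-bundle} or~\ref{alg:simp-bundle-const} is a serious step; since $x_k\in[0,x_u]$ the iterates lie in a compact set and possess accumulation points, $r$ is Lipschitz there so $\{g_k\}$ is bounded, and the box bounds $d_l^k=-x_k$, $d_u^k=x_u-x_k$ force $\{d_k\}$ and $\{d_k^f\}$ to be bounded by $D:=\|x_u\|$. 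Observing that the feasibility subproblem~\eqref{eqn:opt-ms-simp-bundle-fea} is a parametric $l_1$ subproblem whose data $c(x),\nabla c(x)$ and box $[-x,\,x_u-x]$ depend continuously on $x$, its optimal value $\delta_k^f$ equals $\delta^f(x_k)$ for a continuous function $\delta^f$; hence, fixing an accumulation point $\bar x$ with $x_{k_s}\to\bar x$, continuity gives $\delta^f(x_{k_s})\to\delta^f(\bar x)$, and it suffices to exhibit a subsequence along which $\delta_k^f\to 0$, since that forces $\delta^f(\bar x)=0$, i.e. $\bar x$ is a critical point of the linearized constraint violation of $c(\cdot)$.

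Next I would pin down a ``reduction'' subsequence. The non‑increasing sequence $\{\pi_k\}$ changes only inside Algorithm~\ref{alg:simp-bundle-const}, and by Lemma~\ref{lem:bounded-lp} the $\eta_\gamma^-\|\lambda^k\|_\infty+\gamma$ contribution to $\theta_k$ remains bounded, so for $k$ large the step‑1 update $\pi_k=\min(\pi_{k-1},1/\theta_k)$ leaves $\pi_k=\pi_{k-1}$ and the only mechanism that can lower $\pi_k$ is the while loop in steps~6--7; since $\pi_k\to 0$, this loop fires at infinitely many iterations $k_1<k_2<\cdots$. At such $k_j$ the value $\pi_{k_j}/\eta_\pi$ tested just before the last reduction failed the test $\delta_{k_j}^{\pi}\ge\eta_f\delta_{k_j}^f$, so by the sufficient bound~\eqref{eqn:simp-bundle-fea-pi} established inside the proof of Lemma~\ref{lem:consistency-penal-defined} one has $\pi_{k_j}/\eta_\pi>(1-\eta_f)\delta_{k_j}^f/(\|g_{k_j}\|D+\tfrac12\alpha_{k_j}D^2)$; using $\|g_k\|\le G$ and $\alpha_k\le\bar\alpha$ this yields $\delta_{k_j}^f\le\kappa\,\pi_{k_j}$ for a fixed $\kappa>0$, hence $\delta_{k_j}^f\to 0$ along $\{k_j\}$.

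The hard part will be propagating $\delta^f\to 0$ from the reduction iterates to an arbitrary accumulation point, equivalently showing that the within‑block drift $\sum_{k_j<i<k_{j+1}}\|d_i\|$ vanishes as $j\to\infty$, so that the accumulation points of $\{x_k\}$ coincide with those of $\{x_{k_j}\}$. My intended tool is the $l_1$ merit function $\phi_{1\pi_k}$, which has a constant parameter on each block $(k_j,k_{j+1})$ and decreases at each serious step: using that $\pi_k r(x_k)+\|c(x_k)\|_1$ is bounded (here both $\pi_k$ non‑increasing and $r$ bounded on $[0,x_u]$ enter) I would first get $\sum_k\pi_k\bigl(\phi_{1\pi_k}(x_k)-\phi_{1\pi_k}(x_{k+1})\bigr)<\infty$ and the convergence of $\|c(x_k)\|_1$, and then combine this block by block with the line‑search lower bound $\beta_k\ge c'\alpha_k\pi_k/(mH_u^c)$ and the per‑step descent $\phi_{1\pi_k}(x_k)-\phi_{1\pi_k}(x_{k+1})\ge(1-\eta_\beta)\tfrac12\alpha_k\beta_k\|d_k\|^2$ of Lemma~\ref{lem:sufficientdecrease-merit-penal}. (If instead one can argue $\|d_k\|\to 0$ for the whole sequence, then Lipschitzness of $c(\cdot)$ makes both $\pi_k(-g_k^Td_k-\tfrac12\alpha_k\|d_k\|^2)$ and $\|c(x_k)\|_1-\|c(x_k)+\nabla c(x_k)^Td_k\|_1$ vanish, so $\delta_k^{\pi_k}\to 0$ in~\eqref{def:pd-penal} and the update rule's inequality $\eta_f\delta_k^f\le\delta_k^{\pi_k}$ delivers $\delta_k^f\to 0$ directly, paralleling the sketch of Theorem~\ref{thm:simp-KKT-consistency-bounded}.) The genuine obstacle is that, unlike in Theorems~\ref{thm:simp-KKT},~\ref{thm:simp-KKT-consistency-finite} and~\ref{thm:simp-KKT-consistency-bounded}, the infeasibility weight $1/\pi_k$ now blows up while the guaranteed line‑search length $\beta_k$ shrinks like $\pi_k$, so the clean ``summable sufficient decrease $\Rightarrow\|d_k\|\to 0$'' argument no longer applies verbatim, and controlling how far $\|c(x_k)\|_1$ and $r(x_k)$ can move over a block relative to $\pi_k$ is the delicate bookkeeping I expect to spend the most effort on. Once the propagation is in place, $\delta^f(\bar x)=\lim_j\delta^f(x_{k_j})=0$, completing the proof.
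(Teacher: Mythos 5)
Your core argument is exactly the paper's: at every iteration $k_j$ at which the while loop (steps 6--7 of Algorithm~\ref{alg:simp-bundle-const}) actually reduces the penalty, the test $\delta_{k_j}^{\pi}<\eta_f\delta_{k_j}^f$ failed at the previously tried penalty value, and the sufficient condition~\eqref{eqn:simp-bundle-fea-pi} from the proof of Lemma~\ref{lem:consistency-penal-defined} then yields $(1-\eta_f)\delta_{k_j}^f\leq \pi\big(\norm{g_{k_j}}D+\tfrac{1}{2}\alpha_{k_j}D^2\big)$ for that failed value of $\pi$; the paper runs the identical computation in~\eqref{eqn:simp-bundle-fea-pi-0-1}--\eqref{eqn:simp-bundle-fea-pi-0-2}, using $\pi_k^0=\min(\pi_{k-1},1/\theta_k)$ where you use $\pi_{k_j}/\eta_\pi$, and both tend to $0$ with $\pi_k$, so $\delta^f\to 0$ along the reduction iterations exactly as you conclude. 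Where you differ is in what you believe remains to be done: the ``hard part'' you flag --- showing the within-block drift vanishes so that accumulation points of $\{x_k\}$ coincide with those of the reduction subsequence, despite $1/\pi_k\to\infty$ and a line-search floor $\beta_k$ shrinking like $\pi_k$ --- is simply not present in the paper. The published proof stops at~\eqref{eqn:simp-bundle-fea-pi-0-2} and asserts that as $\pi_k,\pi_k^0\to 0$ one has $\delta_k^f\to 0$ and hence that any accumulation point is a critical point, i.e.\ it implicitly extends the bound from the reduction iterations to the whole sequence, and it also uses implicitly the continuity of the feasibility value function that you make explicit via the parametric-LP argument. So your proposal reproduces everything the paper actually argues and is more careful about the remaining subtlety; the unfinished bookkeeping you describe is a refinement beyond the paper's proof rather than a defect relative to it, though for a statement fully faithful to ``any accumulation point'' either that propagation step or a proof that the $O(\pi_k)$ bound on $\delta_k^f$ holds at all large $k$ (not only at reduction iterations) would indeed need to be supplied --- and the paper does not supply it either.
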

\begin{proof}
	From Lemma~\ref{lem:bounded-lp} , we know that the Lagrange multipliers from Algorithm~\ref{alg:simp-bundle} are bounded. Therefore, a $\pi_k\to 0$ is caused by step 7 in Algorithm~\ref{alg:simp-bundle-const} being called infinitely many times which in turn increases $\theta_k$ as well. 
	Because we are considering an infinite number of iterations where $\delta_k^f>0$ (otherwise the algorithm would have terminated at step 4), by Lemma~\ref{lem:consistency-penal-defined}, the number of loops 
	between step 6 and 7 is finite for each $k$. Thus, to have infinite many step 7, we must have an infinite number 
	of iterations that would enter step 7 at least once.
	Let $k$ be one of the iterations where $\pi_k$ is reduced through step 7. 
	To simplify the analysis, we denote by $\pi_k^0 = \min{(\pi_{k-1},\frac{1}{\theta_k})}$ the penalty parameter at iteration $k$ after step 2.
	Then the change in objective of the penalized problem~\eqref{eqn:opt-ms-simp-bundle-penal} with $\pi_{k}^0$,
	before the update to $\pi_k$ at step 7, is 
	\begin{equation}
          \centering
	  \begin{aligned}
		  \delta_k^{0} = \pi_{k}^0 \left(-g_k^T d_k^{0} -\frac{1}{2}\alpha_k \norm{d_k^{0}}^2\right) +\norm{c(x_k)}_1-\norm{c(x_k) +\nabla c(x_k)^T d_k^{0}}_1, 
          \end{aligned}
	\end{equation}
	where $d_k^{0}$ is the solution of~\eqref{eqn:opt-ms-simp-bundle-penal} with $\pi_{k}^0$.
		Since $\pi_{k}^0 $ enters step 7 in Algorithm~\ref{alg:simp-bundle-const}, $\delta_k^{0} < \eta_f\delta_k^f$. 
	Given $d_k^{0}$ as the solution to~\eqref{eqn:opt-ms-simp-bundle-penal} with $\pi_{k}^0$ and using the definition of $\delta_k^f$ in~\eqref{def:pd-fea}, 
   \begin{equation} \label{eqn:simp-bundle-fea-pi-0-1}
   \centering
    \begin{aligned}
	    \eta_f\delta_k^f > \delta_k^{0} =& \pi_{k}^0\left(-g_k^T d_k^{0} -\frac{1}{2}\alpha_k \norm{d_k^{0}}^2\right) +\norm{c(x_k)}_1- \norm{c(x_k)+ \nabla c(x_k)^T d_k^{0}}_1\\
	      \geq& \pi_{k}^0\left(-g_k^T d_k^f -\frac{1}{2}\alpha_k \norm{d_k^f}^2\right) +\norm{c(x_k)}_1- \norm{c(x_k)+ \nabla c(x_k)^T d_k^f}_1\\
		     =& \pi_{k}^0\left(-g_k^T d_k^f  - \frac{1}{2} \alpha_k \norm{d_k^f}^2\right) + \delta_k^f \\
		     \geq& \pi_{k}^0\left(-\norm{g_k}\norm{d_k^f} - \frac{1}{2}\alpha_k\norm{d_k^f}^2\right)+ \delta_k^f.\\
    \end{aligned}
   \end{equation}
	Since $x_k$, $g_k$ and $\alpha_k$ are all bounded, assigning $D = \norm{d_u-d_l}=\norm{x_u}$, we have 
   \begin{equation} \label{eqn:simp-bundle-fea-pi-0-2}
   \centering
    \begin{aligned}
	    (1-\eta_f) \delta_k^f \leq& \pi_{k}^0 \left(\norm{g_k}\norm{d_k^f} + \frac{1}{2}\alpha_k\norm{d_k^f}^2\right)\\
			       =&  \pi_{k}^0\left( \norm{g_k} D + \frac{1}{2}\alpha_k D^2 \right).\\
    \end{aligned}
   \end{equation}
	Therefore, as $\pi_k$ and $\pi_k^0$ approach 0, so does $\delta_k^f$.
	This proves that as $x_k\to \bar{x}$, $\delta_k^f\to 0$. Thus, $\bar{x}$ is a critical point of the linearized constraint violation of $c(\cdot)$.
\end{proof}

Theorem~\ref{thm:simp-KKT-consistency-unbounded} is a relatively weak result in the sense that it does not distinguish between an accumulation point $\bar{x}$ that 
is feasible, \textit{i.e.}, $c(\bar{x})=0$  and infeasible. Stronger results are possible for smooth optimization even under a  
less restrictive constraint qualification, Mangasarian–Fromovitz constraint qualification. For example, Byrd et. al.~\cite{byrd2005} show that if $\pi_k \to 0$ and $c(\bar{x}) = 0$, then their SLQP algorithm converges 
to a KKT point. Such result for (nonsmooth) upper-$C^2$ objective function is not evident to us and will be the subject of our future research.

\section{Numerical Applications}\label{sec:exp}
We present three numerical examples to demonstrate the theoretical and numerical 
capabilities the proposed algorithm offers. The examples are chosen within 
the general formulation of two-stage optimization problems.
For nonsmooth nonconvex optimization problems, a wildly popular assumption of the 
objective function is lower-$C^2$ or prox-regular and the constraints are often assumed to be convex.
The first two problems are synthetic problems designed to showcase the extra theoretical
convergence analysis our algorithm brings to problems that do not satisfy these lower-type of properties.
They are simple and computationally inexpensive. We also present a comparison of results with the 
redistributed bundle method in~\cite{hare2010}, which we drew inspiration from.

\begin{example}\label{exmp:1}
	(Differentiable but not continuously differentiable objective) Example 1 has the following mathematical formulation:
\begin{equation}\label{prob:ex1-1st}
 \begin{aligned}
  &\underset{x \in \Rbb^3}{\text{min}} 
	  &    f(x_1) + &\mu [(x_{2}-\frac{1}{2})^2+ x_3^2] +  r(x)\\
   &\text{s.t.}
	  &  -5 \leq &x_1 \leq 5, \ 0 \leq x_2 \leq 50,\ -1 \leq x_3 \leq 10,\\
 \end{aligned}
\end{equation}
where $\mu=10^5$ and $f(x_1):\Rbb\to\Rbb$ is a continuously differentiable function.
	The function $r(\cdot)$ is the solution to the second-stage problem
\begin{equation}\label{prob:ex1-2nd}
 \begin{aligned}
  &\underset{y \in \Rbb^3}{\text{min}} 
	  &&  \norm{x-y}^2 \\
   &\text{s.t.}
	  &&  y_2 \leq y_3^2, \  -5 \leq y_1 \leq 5,\\
	  &&& -5 \leq y_2\leq 5, \ 0 \leq y_3 \leq 10.\\
 \end{aligned}
\end{equation}
\end{example}
It is obvious that $r(\cdot)$ is a squared-distance function and thus upper-$C^2$. 
In addition, $r(\cdot)$ turns out to also be lower-$C^1$, but not lower-$C^2$ at $\tilde{x}=[x_1,\frac{1}{2},0]$, where $x_1$ 
can be any value within its bounds. This translates to $r(\cdot)$ being differentiable but not continuously differentiable 
at $\tilde{x}$, as shown in the feasible region plot on the left of Figure~\ref{fig:ex_fs}.
In this case, our proposed algorithm offers global convergence support compared to algorithms
that require a lower-$C^2$ objective. It needs to be pointed out that $r(\cdot)$ is  
continuously differentiable at remaining points in the domain and thus other algorithms with carefully 
chosen parameters can succeed in solving Example 1 regardless.
\begin{figure}
  \centering
  \includegraphics[width=0.95\textwidth]{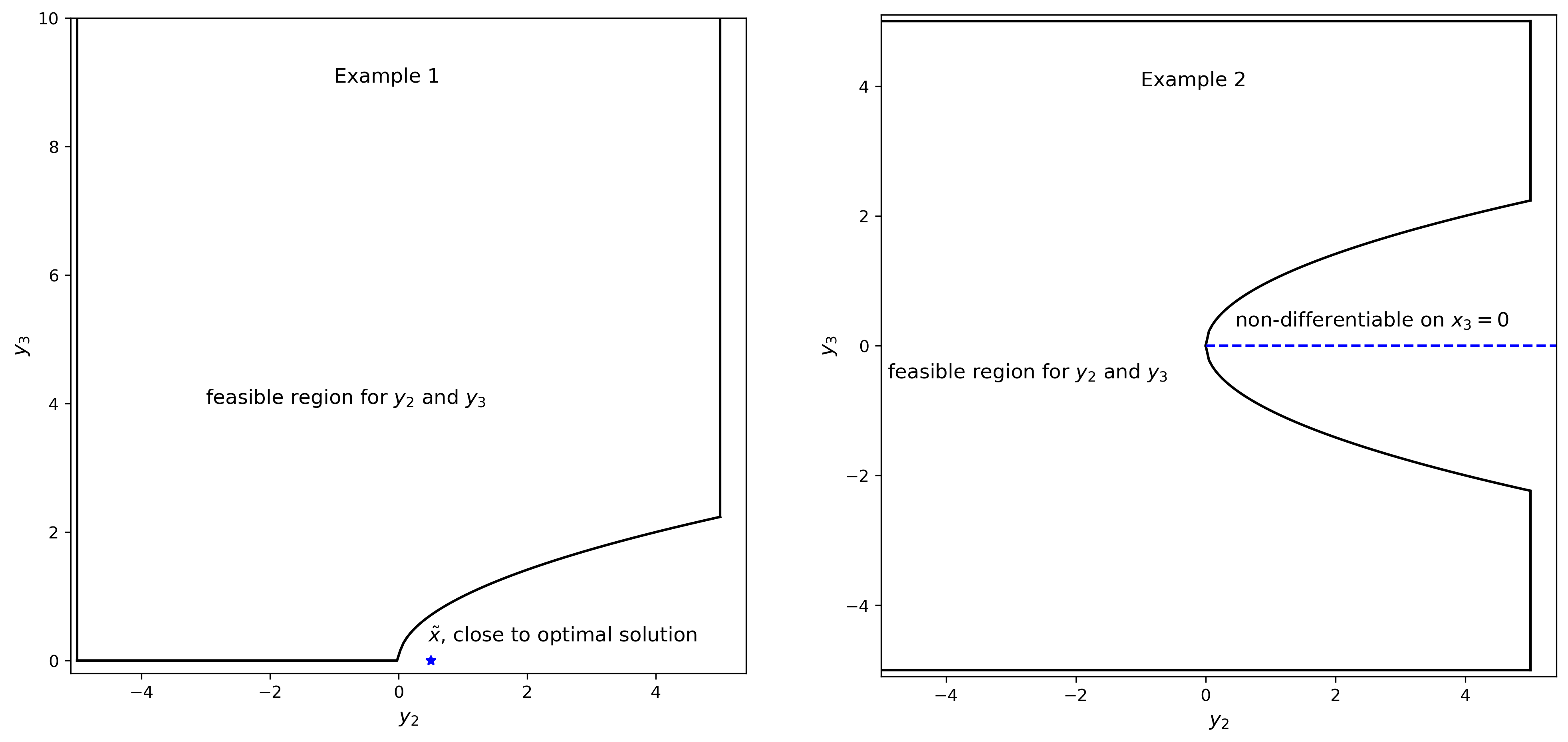}
	\caption{Feasible set of $y_2,y_3$ plane of example 1 (left) and example 2 (right)}
\label{fig:ex_fs}
\end{figure}

The true solution is obtained by treating the two-stage problem as one problem with variables in $\Rbb^6$ and solved with Ipopt. The proposed Algorithm~\ref{alg:simp-bundle} starts with $\alpha_0=1.0,\epsilon=10^{-8}$ and the redistributed bundle method in~\cite{hare2010} is implemented with $\Gamma=2,\mu_0=1,\eta_0=1$.
The initial point is set to $x_0=[1,50,5]^T$. 
The simplified bundle method exits in 4 iterations.
While both algorithms quickly moved close to the solution, due to the lack of lower-$C^2$ property at $\tilde{x}=[x_1,\frac{1}{2},0]$ ($\forall x_1\in[-5,5]$), the convexification parameter $\eta_n$ registers a large value for the redistributed bundle method. 
Given the error tolerance at $10^{-8}$, the redistributed bundle method will require more iterations and could potentially be destabilized due to numerical error from the large value of $\eta_n$. This problem disappears if error tolerance is set to a larger number. Figure~\ref{fig:ex1} shows the numerical result of error measure against the number of iterations for both redistributed and simplified bundle method. 
The quadratic coefficient, which decreases in the simplified bundle method as explained in Remark~\ref{rmrk:realalpha}, is also plotted for both algorithms.
\begin{figure}
  \centering
  \includegraphics[width=0.95\textwidth]{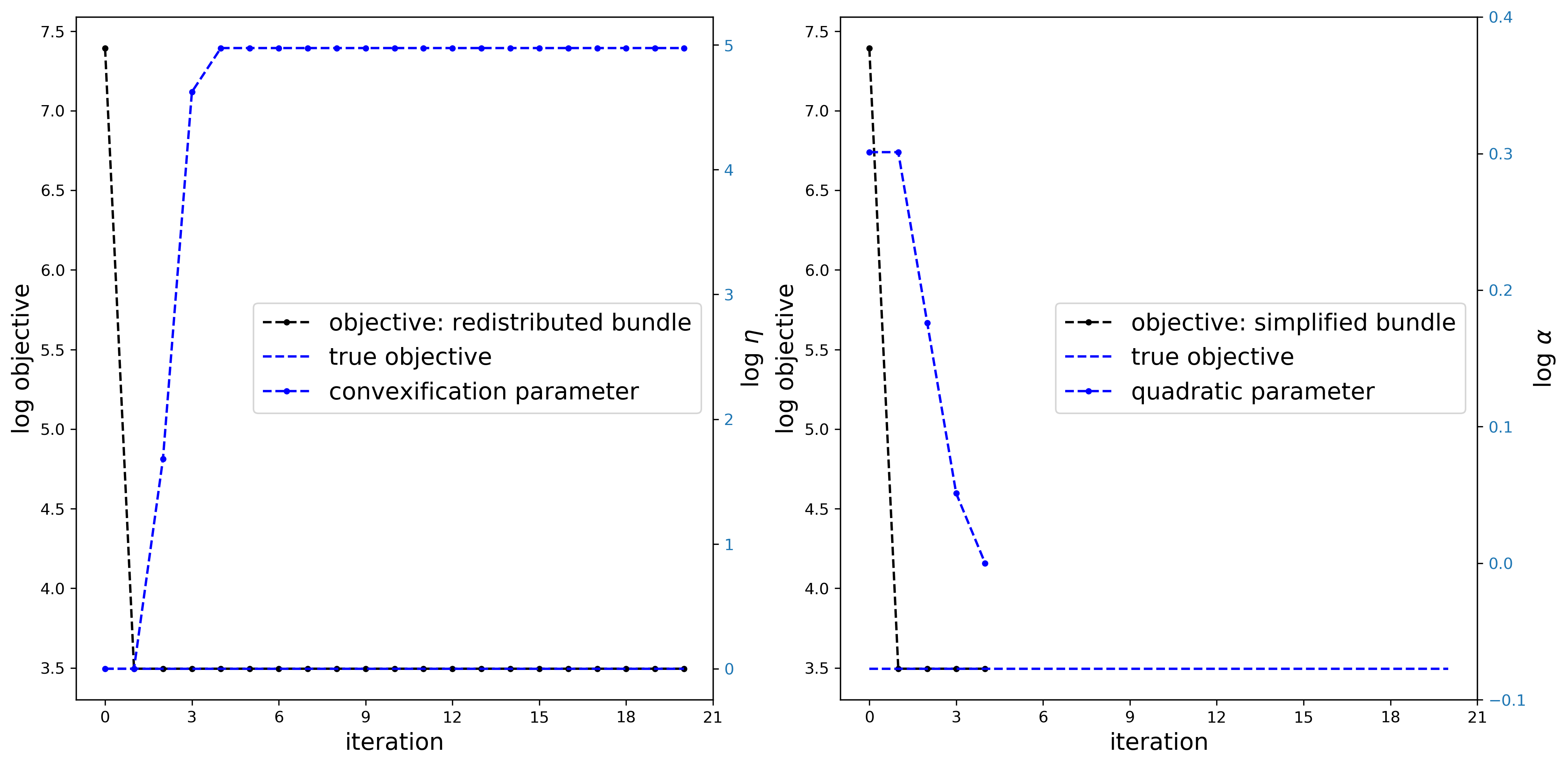}
 \caption{Convergence and quadratic coefficient plots for example 1}
\label{fig:ex1}
\end{figure}

\begin{example}\label{exmp:2}
  (Non-differentiable) Example 2 has the following form:
\begin{equation}\label{prob:ex2-1st}
 \begin{aligned}
  &\underset{x \in \Rbb^3}{\text{min}} 
	  &    f(x_1) + &\mu [(x_{2}-\frac{1}{2})^2+ x_3^2] +  r(x)\\
   &\text{s.t.}
	  &  -5 \leq &x_1 \leq 5, \ 0 \leq x_2 \leq 50,\\
	 &&  -5 \leq &x_3 \leq 5.\\
 \end{aligned}
\end{equation}
Again, $\mu=10^5$ and $f(x_1)$ is a continuously differentiable function.
	The function $r(\cdot)$ is the solution to the second-stage problem
\begin{equation}\label{prob:ex2-2nd}
 \begin{aligned}
  &\underset{y \in \Rbb^3}{\text{min}} 
	  & & \norm{x-y}^2 \\
   &\text{s.t.}
	  &&  y_2 \leq y_3^2, \ -5 \leq y_1,y_2,y_3 \leq 5.\\
 \end{aligned}
\end{equation}
\end{example}
Example 2 is designed to only vary slightly from Example 1 to illustrate the large group of problems 
the proposed algorithm can tackle. 
With a slight change in the constraint to allow $y_3<0$, the solution function $r(\cdot)$ is no longer differentiable on $x_3=0$ as 
multiple solutions $y$ exist. This is illustrated on the right plot in Figure~\ref{fig:ex_fs}.

However, $r(\cdot)$ remains upper-$C^2$ and the convergence analysis for the proposed algorithm applies.
Figure~\ref{fig:ex2} shows the objective and quadratic coefficient for both redistributed and simplified bundle method from the same starting point as in Example 1. Similar conclusions as in Example 1 can be drawn.
\begin{figure}
  \centering
  \includegraphics[width=0.95\textwidth]{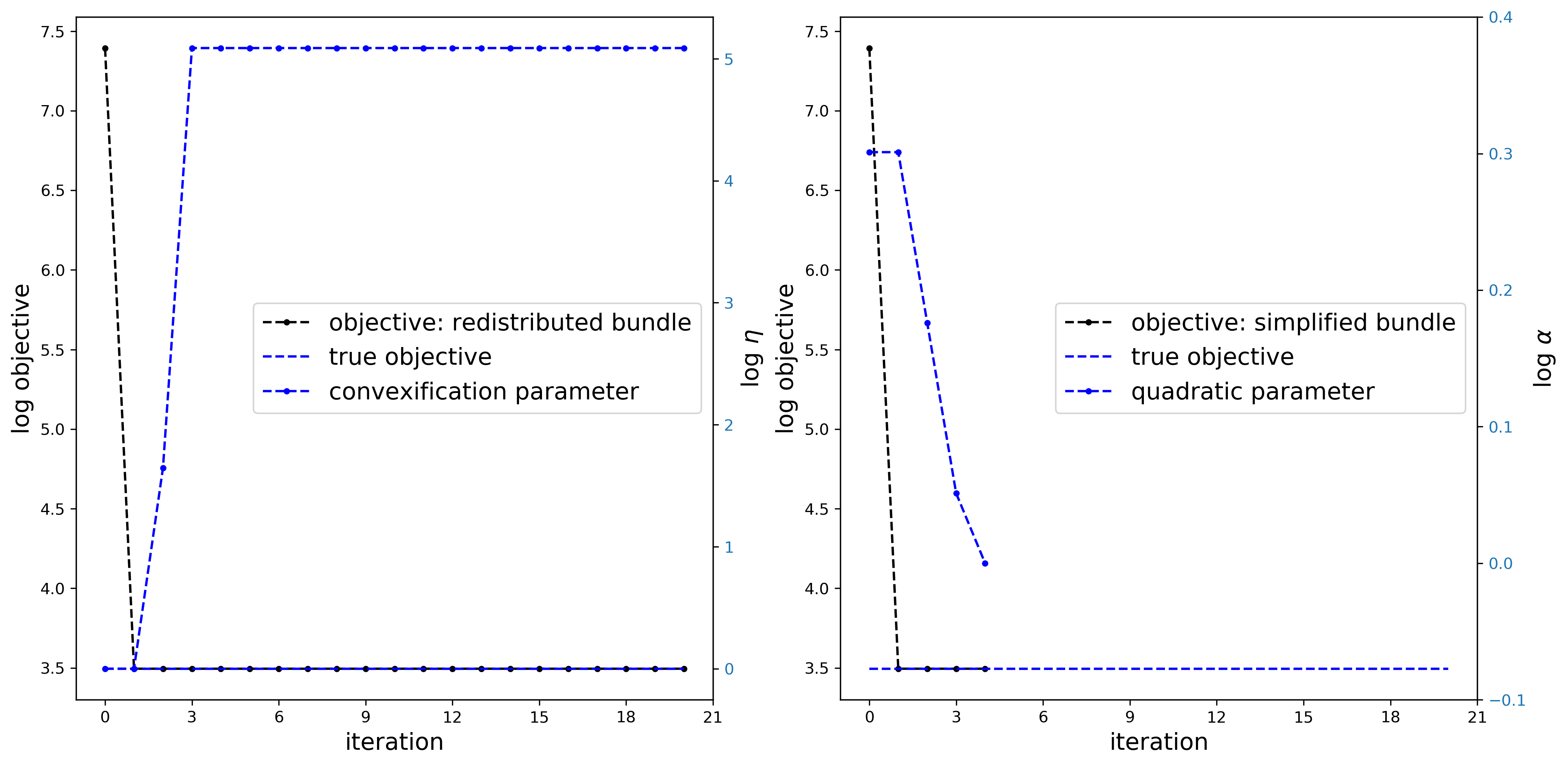}
 \caption{Convergence and quadratic coefficient plots for example 2}
\label{fig:ex2}
\end{figure}

\begin{example}\label{exmp:3}
	(smoothed SCACOPF) Example 3 is a SCACOPF problem with affine active power constraint for 
	contingency (second-stage) problems. The network data used in this example is from the ARPA-E Grid Optimization competition~\cite{petra_21_gollnlp}. 
	The complete mathematical formulation is complex but the master (first-stage) problem fits in the form of~\eqref{eqn:opt0}, 
	where $r$ is the recourse function of the contingency problems. 
	Details of the 
	problem setup can be seen in~\cite{petra_21_gollnlp}. The number of contingency problems that are solved to evaluate $r$ is 100.
\end{example}
The coupling constraint between master and contingency variables can be viewed as linear in the former ($x$) but it is nonsmooth.
This means recourse function $r$ might not be upper-$C^2$.
However, using a quadratic penalty of the coupling constraints in the contingency problems, $r$ in~\eqref{eqn:opt0} becomes upper-$C^2$ and the problem is referred to as the smoothed SCACOPF, in contrast to the original non-smoothed one. 
The proposed algorithm is applied to the smoothed SCACOPF, where the quadratic penalty parameter $\mu$ is set to $10^9$.
While this means the convergence analysis applies, we only solve an approximated problem.
To verify the accuracy of the solution, the true solution is obtained by solving the extensive form of the SCACOPF with Ipopt. 
It is plotted with the optimization results in Figure~\ref{fig:ex3}. We also plot the non-smoothed objective evaluated at the optimal solution $x$ gained from the smoothed problem at each iteration. 
The rejected steps are marked as well.
Within 200 iterations, the non-smoothed objective reach within $0.010\%$ error of the true solution, which
is acceptable and useful in practice. To speed up convergence of this first-order method, 
the quadratic coefficient $\alpha_k$ is reduced whenever possible. 
For large-scale problems with 
$10^4$ coupled optimization variables and $10^5$ contingencies, the extensive form of the SCACOPF
would be impractical, while the simplified bundle algorithm has been successfully deployed to supercomputers~\cite{wang2021}.

\begin{figure}
  \centering
  \includegraphics[width=0.95\textwidth]{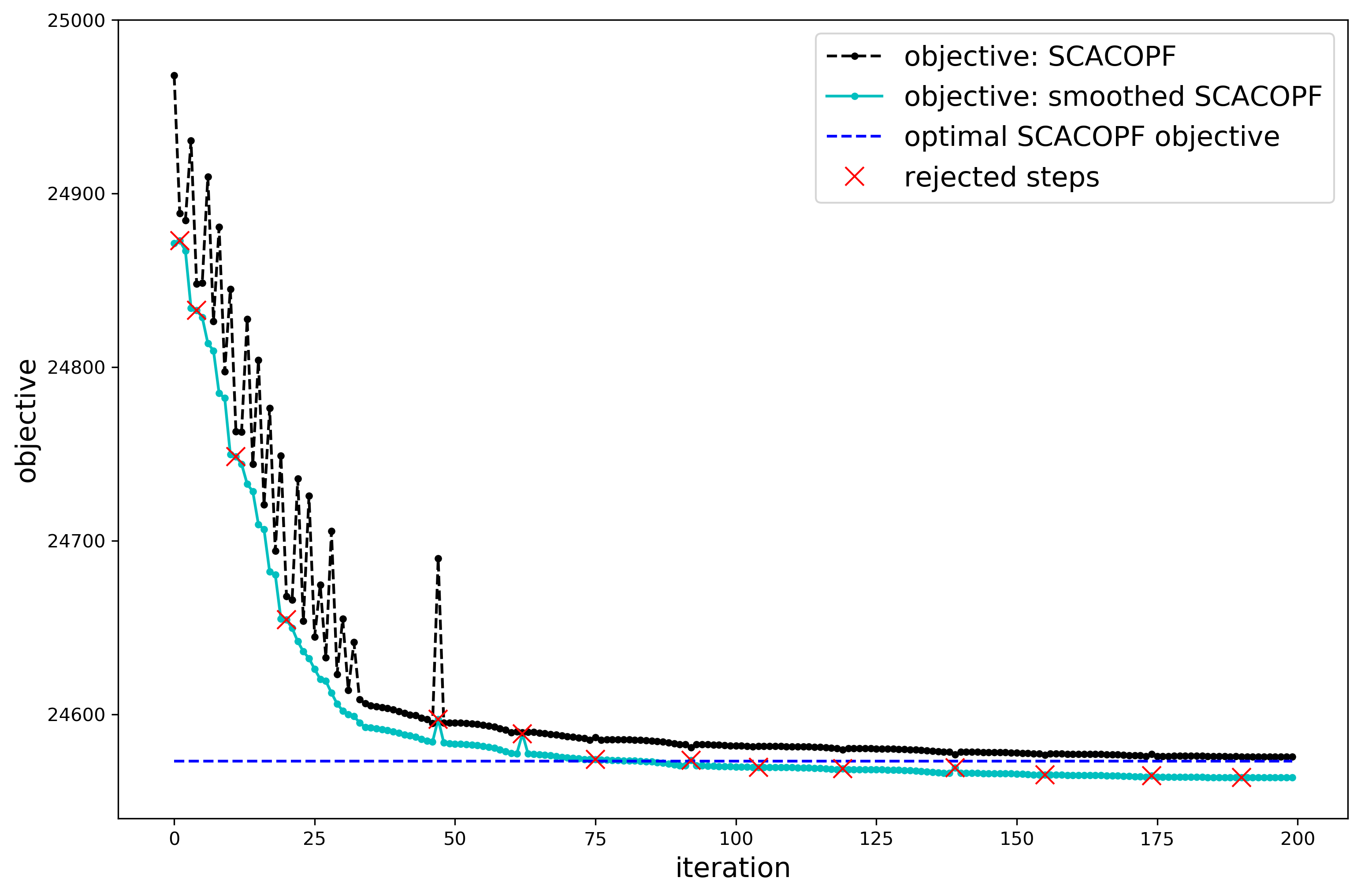}
 \caption{Convergence plots for example 3}
\label{fig:ex3}
\end{figure}

\section{\normalsize Conclusions}\label{sec:con}
In this report, we have motivated, proposed and analyzed algorithms for a group of nonsmooth, nonconvex optimization problems. 
We show that many two-stage (stochastic) optimization problems, including our target application SCACOPF problems exhibit interesting properties which are not thoroughly investigated previously. 
This has lead to our design and analysis of the simplified bundle algorithm whose global convergence can be achieved under upper-$C^2$ objectives. The algorithm is scalable and has been implemented on parallel computing platforms. Numerical experiments show promising convergence and scaling results.

\section*{Acknowledgments}
Prepared by LLNL under Contract DE-AC52-07NA27344.
This document was prepared as an account of work sponsored by an agency of the United States government. Neither the United States government nor Lawrence Livermore National Security, LLC, nor any of their employees makes any warranty, expressed or implied, or assumes any legal liability or responsibility for the accuracy, completeness, or usefulness of any information, apparatus, product, or process disclosed, or represents that its use would not infringe privately owned rights. Reference herein to any specific commercial product, process, or service by trade name, trademark, manufacturer, or otherwise does not necessarily constitute or imply its endorsement, recommendation, or favoring by the United States government or Lawrence Livermore National Security, LLC. The views and opinions of authors expressed herein do not necessarily state or reflect those of the United States government or Lawrence Livermore National Security, LLC, and shall not be used for advertising or product endorsement purposes.

\clearpage
\appendix

\setcounter{section}{1}
\renewcommand{\thesection}{\Alph{section}}
\addcontentsline{toc}{section}{Appendix A: Slicing algorithm} 
\section*{Appendix A: Continuity of the second-stage solution function}\label{sec:cont}
\par\noindent
This appendix details the continuity of $r_{\mu}(\cdot)$ in~\eqref{eqn:opt-rc-mu-al}. 
We consider continuity of $r_{\mu}(\cdot)$ using Proposition 4.4 from~\cite{shapiro2000}. The related notations
are denoted as follows.
The set $S(x)\subset Y$ is the optimal solutions at $x$.
We denote by $\Phi(x) \subset \Rbb^m$ the feasible set of $y$ in the recourse subproblem.
From the constraints in~\eqref{eqn:opt-rc-mu-al}, an important observation is that $\Phi(x)=\Phi$, for all x, \textit{i.e.}, the 
feasible set for $y$ is independent of $x$ due to smoothing.
 To simplify 
the notations, instead of applying compact set theories on the extended real vector space, it is reasonable to assume the following.
\begin{assumption}\label{assp:compact-set}
 The optimization variables $x$ and $y$ are bounded. The feasibility sets for $x$ and $y$, denoted as $X\in \Rbb^n$ and $Y\in\Rbb^m$, respectively, are bounded. Moreover, $Y$ is compact.
\end{assumption}
We establish in Lemma~\ref{lem:continuity} that under the given assumption, 
the optimization problem in~\eqref{eqn:opt-rc-mu-al} meets the conditions in Proposition 4.4 in~\cite{shapiro2000}, which 
directly establishes the continuity of $r_{\mu}(\cdot)$.

\begin{lemma}\label{lem:continuity}
The optimization problem~\eqref{eqn:opt-rc-mu-al} satisfies the conditions in Proposition 4.4 in~\cite{shapiro2000} at a given x,
which are (1) the function $f(x,y)$ is continuous
on $X \times Y$, (2) the multifunction $\Phi(\cdot)$ is closed, (3) there exists $\alpha\in \Rbb$ and a compact
set $C\subset Y$ such that for every $x'$ in a neighborhood of $x$, the level set
\begin{equation} \label{eqn:inf-compactness}
 \centering
  \begin{aligned}
	  \text{lev}_{\alpha}f(x',\cdot):=\{y \in \Phi(x): f(x',y)\leq \alpha \}
  \end{aligned}
\end{equation}
is nonempty and contained in C, (4)
for any neighborhood $V_y$ of the set $S(\bar{x})$,$\bar{x}\in X$, there exists a neighborhood $V_x$ of $\bar{x}$ such that
$V_y\cap \Phi(x)\neq \emptyset$ for all $x \in V_x$.
\end{lemma}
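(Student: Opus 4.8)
The plan is to verify, one at a time, the four hypotheses of Proposition~4.4 in~\cite{shapiro2000} for the smoothed recourse problem~\eqref{eqn:opt-rc-mu-al}, writing $f(x,y):=\mu\norm{Wx-h(y)}^2+p(y)$ for its objective and exploiting throughout the structural fact recorded just after~\eqref{eqn:opt-rc-mu-al}: quadratic penalization moves the coupling out of the constraints, so the feasible multifunction is the \emph{constant} map $\Phi(x)\equiv\Phi$. Under Assumption~\ref{assp:compact-set}, $\Phi$ is a closed subset of the compact set $Y$ and is therefore itself compact, and this single observation will do most of the work.

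First I would dispatch the two continuity/closedness conditions. For condition (1), since $W$ is a bounded linear operator and $h(\cdot),p(\cdot)$ are smooth, $f$ is a finite sum of compositions of continuous maps and hence is continuous on $X\times Y$. For condition (2), the graph of $x\mapsto\Phi(x)$ is $X\times\Phi$; because $\Phi$ is the intersection of the closed sets $c^{-1}(c_{E,2})$, $d^{-1}([d^l,d^u])$ (closed by continuity of $c(\cdot),d(\cdot)$) and the box $[y^l,y^u]$, the set $\Phi$ is closed, so its graph is closed and the multifunction $\Phi(\cdot)$ is closed.

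Next I would handle conditions (3) and (4), where constancy of $\Phi$ again trivializes matters. For (3), take $C:=Y$, which is compact, and let $\alpha$ be any finite upper bound for $f$ on $X\times Y$; such a bound exists because $f$ extends continuously to the compact set $\overline{X}\times Y$. Then for every $x'\in X$ — in particular for every $x'$ in a neighborhood of $x$ — the level set $\{y\in\Phi: f(x',y)\le\alpha\}$ equals $\Phi$, which is nonempty and contained in $C$; the same compactness of $\Phi$ together with Weierstrass shows $S(x')\neq\emptyset$. For (4), given any neighborhood $V_y$ of $S(\bar x)$, since $S(\bar x)\subseteq\Phi$ and $\Phi(x)\equiv\Phi$ we get $V_y\cap\Phi(x)=V_y\cap\Phi\supseteq S(\bar x)\neq\emptyset$ for every $x$, so any $V_x$ (e.g.\ $V_x=\Rbb^n$) works. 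Invoking Proposition~4.4 of~\cite{shapiro2000} then yields continuity of $r_{\mu}(\cdot)$ and upper semicontinuity of $x\mapsto S(x)$, which is exactly what is needed to conclude Lemma~\ref{lemma:continuity}.

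I do not anticipate a genuine obstacle; the one point requiring care is the nonemptiness of $\Phi$ — equivalently, feasibility of~\eqref{eqn:opt-rc-mu-al} — which I would either fold into Assumption~\ref{assp:compact-set}/\ref{assp:rec-linear} or justify from the relatively-complete-recourse structure of the SCACOPF contingency problems once slack variables and the quadratic penalty are in place. With feasibility granted, each of (1)--(4) is a short verification, and the real content is that the regularization makes conditions (2) and (4) — the ones that would otherwise demand delicate semicontinuity arguments for a genuinely $x$-dependent feasible set — essentially automatic.
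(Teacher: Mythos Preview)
Your proposal is correct and follows essentially the same approach as the paper: verify each of the four conditions of Proposition~4.4 in~\cite{shapiro2000} directly, using smoothness of the objective for~(1), closedness of the (constant) feasible set for~(2), boundedness of $f$ on the compact domain for~(3), and the constancy $\Phi(x)\equiv\Phi$ for~(4). Your write-up is in fact a bit more explicit than the paper's in places (e.g., taking $C:=Y$ and $\alpha:=\sup f$ outright, and flagging the nonemptiness of $\Phi$ as the one genuine assumption).
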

\begin{proof}
The objectives and constraints in the recourse subproblem are twice continuously differentiable, as mentioned 
	when introducing~\eqref{eqn:2ndstage},
	which guarantees (1) for the entire feasible set $\Phi(x)$. Assumption~\ref{assp:compact-set}, consistent with 
the constraints and bounds on $y$, ensures a closed feasible set and thus (2) is met. 
Since $f(x',y)$ is continuously differentiable on $X \times Y$, it is also bounded.
Denoting a neighborhood of $x$ as $V_x$, 
the obvious choice of $\alpha$ to make the level set $\text{lev}_{\alpha}f(x',\cdot),  \forall x' \in V_x$ nonempty is to let it be the maximum value of $f(x',\cdot)$ on $C$. Therefore, an $\alpha$ exists such that $f(x',y)\leq \alpha, \forall x' \in V_x,y\in \Phi(x')$.
	Given the compact set $Y\subset \Rbb^m$, 
a compact subset $C\subset Y$ can be found such that the level set $\text{lev}_{\alpha}f(x',\cdot)$ is contained in $C$. 
Thus, (3) is satisfied.
To see (4),
	let $\bar{y} \in S(\bar{x})$ and $V_{y}$ be a neighborhood of $\bar{y}$. Since $\Phi(x)$ is independent of $x$
and compact, it is clear $V_y\cap \Phi(x)\neq \emptyset$.
\end{proof}
We can then prove Lemma~\ref{lemma:continuity}.

\begin{proof}
Applying Proposition 4.4 in~\cite{shapiro2000} and Lemma~\ref{lem:continuity} directly,  $r_{\mu}(\cdot)$ is continuous for any $x\in X$ and the multifunction $x\to S(x)$ is upper semicontinuous at $x$.
\end{proof}

\clearpage
\bibliographystyle{plain}
\bibliography{bibliography}
\end{document}